\numberwithin{equation}{section}
\numberwithin{figure}{section}
\newcommand\R{\mathbb{R}}
\newcommand\Z{\mathbb{Z}}
\newcommand\T{\mathbb{T}}
\newcommand\al{\alpha}
\newcommand\gam{\gamma}
\newcommand\Gam{\Gamma}
\newcommand\del{\delta}
\newcommand\Del{\Delta}
\newcommand\lam{\lambda}
\newcommand\Lam{\Lambda}
\newcommand\sig{\sigma}
\newcommand\Om{\Omega}
\newcommand\eps{\varepsilon}
\renewcommand\S{\mathcal{S}}
\renewcommand\le{\leqslant}
\renewcommand\ge{\geqslant}
\renewcommand\leq{\leqslant}
\renewcommand\geq{\geqslant}
\newcommand\sbt{\subset}
\newcommand{\ft}[1]{\widehat{#1}}
\newcommand{\supp}{\operatorname{supp}}
\newcommand{\spec}{\operatorname{spec}}
\newcommand{\dist}{\operatorname{dist}}
\newcommand{\norm}[2]{\|{#1}\|_{{#2}}}
\newcommand{\nmb}[1]{\norm{{#1}}{*}}
\theoremstyle{plain}
\newtheorem{thm}{Theorem}[section]
\newtheorem{lem}[thm]{Lemma}
\newtheorem{lemma}[thm]{Lemma}
\newtheorem{cor}[thm]{Corollary}
\newtheorem{prop}[thm]{Proposition}
\newtheorem*{claim*}{Claim}
\newcommand{\thmref}[1]{Theorem~\ref{#1}}
\newcommand{\secref}[1]{Section~\ref{#1}}
\newcommand{\lemref}[1]{Lemma~\ref{#1}}
\newcommand{\propref}[1]{Proposition~\ref{#1}}
\newcommand{\corref}[1]{Corollary~\ref{#1}}
\theoremstyle{definition}
\newtheorem{definition}[thm]{Definition}
\newtheorem*{definition*}{Definition}
\newtheorem*{remarks*}{Remarks}
\newtheorem*{remark*}{Remark}
\newenvironment{enumerate-alph}
{\begin{enumerate}
\addtolength{\itemsep}{5pt}
}
{\end{enumerate}}
\newenvironment{enumerate-num}
{\begin{enumerate}
\addtolength{\itemsep}{5pt}
}
{\end{enumerate}}
\newenvironment{enumerate-text}
{\begin{enumerate}
\addtolength{\itemsep}{5pt}
}
{\end{enumerate}}
\begin{document}

\title
[Completeness of translates in $L^p(\mathbb{R})$]
{Completeness of sparse, almost integer and \\ 
 finite local complexity sequences \\ 
 of translates in $L^p(\mathbb{R})$}

\author{Nir Lev}
\address{Department of Mathematics, Bar-Ilan University, Ramat-Gan 5290002, Israel}
\email{levnir@math.biu.ac.il}

\author{Anton Tselishchev}
\address{St. Petersburg Department of Steklov Mathematical Institute, Fontanka 27, St. Petersburg 191023, Russia}
\email{celis\_anton@pdmi.ras.ru}

\date{March 28, 2026}
\subjclass[2020]{42A10, 42A65, 46E30}
\keywords{Complete systems, translates}
\thanks{Research supported by ISF Grant No.\ 1044/21 and 854/25,
and the Foundation for the 
Advancement of Theoretical Physics and
Mathematics ``BASIS''}

\begin{abstract}
A real sequence $\Lambda = \{\lambda_n\}_{n=1}^\infty$ is called \emph{$p$-generating} if there exists a function $g$ whose translates $\{g(x-\lambda_n)\}_{n=1}^\infty$ span the space $L^p(\mathbb{R})$. While the $p$-generating sets were completely characterized for $p=1$ and $p>2$, the case $1 < p \le 2$ remains not well understood. In this case, both the size and the arithmetic structure of the set play an important role. In the present paper, (i) We show that a $p$-generating set $\Lambda$ of positive real numbers can be very sparse, namely, the ratios $\lambda_{n+1} / \lambda_n$ may tend to $1$ arbitrarily slowly; (ii) We prove that every ``almost integer'' sequence $\Lambda$, i.e.\ satisfying $\lambda_n = n + \alpha_n$, $0 \neq \alpha_n \to 0$, is $p$-generating; and (iii) We construct $p$-generating sets $\Lambda$ such that the successive differences $\lambda_{n+1} - \lambda_n$ attain only two different positive values. The constructions are, in a sense, sharp: it is well known that $\Lambda$ cannot be Hadamard lacunary and cannot be contained in any arithmetic progression.
\end{abstract}

\maketitle


\section{Introduction}

\subsection{} 
The problem of completeness of translates of a single
function in $L^p(\R)$ spaces goes back to
the classical Wiener's theorems \cite{Wie32},
which characterize the functions whose translates
span  $L^1(\R)$ or $L^2(\R)$. 
If $g \in L^1(\R)$ then the system of translates
\begin{equation}
\label{eq:A1.10}
\{ g(x-\lambda) \}, \; \lam \in \R,
\end{equation}
is complete in the space
$L^1(\R)$ if and only if the Fourier transform
$\ft{g}$ has no zeros,
while if $g \in L^2(\R)$  then the system
\eqref{eq:A1.10} is complete in 
$L^2(\R)$ if and only if $\ft{g}(t) \neq 0$ a.e.

Beurling \cite{Beu51} proved that if
$g \in (L^p \cap L^1)(\R)$, $1 < p < 2$, 
then the translates
\eqref{eq:A1.10} span
$L^p(\R)$ if the zero set of 
$\ft{g}$
has Hausdorff dimension less
than $2(p-1)/p$.
However, this sufficient condition is not necessary. 
Moreover, 
the functions $g$ whose translates
span   $L^p(\R)$, $1<p<2$,
cannot be characterized by the zero
set of $\ft{g}$, see \cite{LO11}.

\subsection{}
It is well known that even 
a discrete sequence of translates
may suffice to span the space $L^p(\R)$.
Let us  say that a discrete set $\Lam \sbt \R$ is  
 \emph{$p$-generating} if there exists a function $g\in L^p(\R)$ 
 (called a ``generator'')
 such that the  system of its $\Lam$-translates 
\begin{equation}
\label{eq:A1.14}
\{ g(x-\lambda)\}, \; \lam \in \Lam,
\end{equation}
is complete in the space $L^p(\R)$. 

Which sets $\Lam$ are $p$-generating?
Note that if $\Lambda$ is   $p$-generating for some $p$, 
then it is also $q$-generating for every $q > p$,
see \cite[Section 12.6]{OU16}.
It is thus ``more difficult'' to span the space
$L^p(\R)$ for smaller values of $p$.

For example, if $p>2$ then the set 
$\Lambda = \Z$ is  $p$-generating,
i.e.\ there is a function $g \in L^p(\R)$ whose
integer translates span  $L^p(\R)$.
This fact was proved in \cite{AO96}. 
The result was improved later: it follows from 
\cite[Theorem 3.2]{FOSZ14} that 
\emph{any unbounded set} $\Lambda$,
no matter how sparse, is $p$-generating for
every $p > 2$.

To the contrary, the set of integers $\Lambda = \Z$ is not $2$-generating.
Indeed, by a simple argument involving the Fourier transform, one can
show that a $2$-generating set $\Lambda$ cannot be contained 
in any arithmetic progression, see \cite[Section 11.1]{OU16}.

However, it was proved in \cite{Ole97} that   any
 ``almost integer'' sequence 
\begin{equation}
\label{eq:A1.18}
\Lam = \{ n + \alpha_n : n \in \Z\}, \quad  
0 \ne \alpha_n \to 0 \quad (|n| \to +\infty),
\end{equation}
is $2$-generating. In particular, there exist
\emph{uniformly discrete} $2$-generating sets.
(We recall that a set $\Lam \sbt \R$ is said to be
\emph{uniformly discrete} if
the distance between any two distinct 
points of $\Lam$ 
is bounded from below by a positive constant.)

The case $1 < p < 2$ seems to be more difficult,
mainly due to the absence of 
Plancherel's theorem in the corresponding $L^p(\R)$ spaces.
Only in \cite{OU18}, 
using  methods of complex analysis, it was shown that 
if the perturbations  $\alpha_n$ are exponentially small,
then the set $\Lambda$  in \eqref{eq:A1.18} 
is  $p$-generating for every $p > 1$. A different 
approach, based on a result from \cite{Lan64}, 
was developed in \cite{Lev25},
which allows one to construct $p$-generating sets, $p>1$,
consisting only of perturbations of the positive integers.

On the other hand, for $p=1$ there are no
uniformly discrete  generating sets \cite{Fax96}.
Moreover, the $1$-generating sets $\Lam \sbt \R$
were characterized in \cite{BOU06} as the sets whose
Beurling--Malliavin density is infinite
(for the definition of the Beurling--Malliavin density,
 see e.g.\ \cite[Section 4.7]{OU16}).


\section{Results}

\subsection{}
The results mentioned above provide
a complete characterization of
the discrete $p$-generating sets  for $p=1$
and for $p>2$. To the contrary, 
the $p$-generating sets for $1 < p \le 2$, 
 remain  not well understood. 
 In this case, both the size and the arithmetic 
structure of the set play an important role.

On one hand, a $p$-generating set, $1 < p \le 2$,
cannot be too sparse. If
a positive real sequence  $\{\lambda_n\}_{n=1}^\infty$
satisfies the Hadamard lacunarity condition
 $\lambda_{n+1}/\lambda_n > c > 1$, 
 then it is not $2$-generating
 (and hence not $p$-generating for all $p \le 2$),
 see   \cite[Section 11.4]{OU16}.
 
It is known
\cite{Ole98}, \cite{NO09} that this lacunarity condition is,
 in a sense, sharp: for any positive 
 sequence $\eps_n$ tending to $0$, 
 no matter how slowly, there exists a $2$-generating
 set $\Lam  = \{\lambda_n\}_{n=1}^\infty$ of
  positive  real numbers 
satisfying $\lambda_{n+1}/\lambda_n > 1+\eps_n$
 for all $n$. 
This result yields very sparse $2$-generating sets,
having any subexponential growth.

Our first theorem extends the aforementioned result
to  the whole range of exponents $1 < p \le 2$, and
 moreover, we prove that the ``generator'' $g$
 (namely, the function whose $\Lam$-translates span the space)
 can be chosen to be a nonnegative function.

\begin{thm}
\label{thm:M1.1}
For any positive sequence $\eps_n \to 0$ 
and any $\lam_0>0$, one can find 
a nonnegative function 
$g \in \cap_{p>1} L^p(\R)$
and a positive real sequence $\{\lam_n\}_{n=1}^{\infty}$ 
satisfying
 \begin{equation}
\label{eq:M1.5}
\lam_{n+1} / \lam_n  > 1 + \eps_n, \quad n = 0,1,2,\dots,
\end{equation}
such that  the system 
$\{g(x - \lam_n)\}_{n=1}^{\infty}$
is complete in the space $L^p(\R)$ for every $p>1$.
\end{thm}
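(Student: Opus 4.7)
The natural starting point is Hahn--Banach duality: the translates $\{g(x-\lam_n)\}$ are complete in $L^p(\R)$ if and only if there is no nonzero $f \in L^{p'}(\R)$, with $p'=p/(p-1)$, satisfying $(f*\tilde g)(\lam_n)=0$ for every $n$, where $\tilde g(x)=g(-x)$. Writing $h := f*\tilde g$, a bounded uniformly continuous function on $\R$, the task splits into two subgoals: first, choose $\Lam$ and $g$ so that the vanishing of $h$ on the sparse set $\Lam$ forces $h\equiv 0$; second, choose $g$ so that the consequent identity $\ft f \cdot \overline{\ft g}=0$ forces $f=0$. For the second part, taking $g$ nonnegative gives $\ft g(0)=\int g>0$, and one aims for $\ft g$ to be nonzero on a dominating set in $\R$.

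I would build $g$ and $\Lam$ simultaneously via a scale-based recursive construction. Take $g=\sum_{k\ge1}c_k\phi_k$, where $\phi_k$ is a nonnegative Fej\'er-type bump whose Fourier transform $\ft{\phi_k}$ is concentrated in an interval $I_k$ of length $2A_k$, with $A_k\to\infty$, and the $c_k>0$ decay fast enough to force $g\in L^p(\R)$ for every $p>1$ while keeping $|\ft g|$ bounded below on an appropriate set of frequencies. Correspondingly, $\Lam$ is built as a disjoint union $\Lam=\bigcup_k\Lam_k$ of blocks, each $\Lam_k$ contained in an interval $J_k=[u_k,v_k]$ placed far out on the positive axis. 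Inside $J_k$ the points of $\Lam_k$ are chosen so that their local density exceeds $A_k/\pi$, making $\Lam_k$ a uniqueness set for the Bernstein space of bounded entire functions of exponential type $A_k$ by Landau's theorem \cite{Lan64}. The points inside $J_k$ are placed in a nearly geometric pattern ensuring the internal ratios exceed $1+\eps_n$, while the interblock gaps $u_{k+1}/v_k$ are made enormous; together these verify $\lam_{n+1}/\lam_n>1+\eps_n$ for all $n$.

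The completeness argument would then run as follows. Given $f \in L^{p'}$ with $h$ vanishing on $\Lam$, decompose $h$ according to the frequency-localization pattern of $\ft g$: the component of $h$ at scale $k$ is (nearly) band-limited to type $A_k$, is bounded by an $L^{p'}$-Young estimate, and vanishes on $\Lam_k$, so Landau-type uniqueness forces that component to vanish. Summing over $k$ one concludes $h\equiv 0$, and then the lower bound on $|\ft g|$ yields $f=0$. The principal obstacle, and where the main technical work will concentrate, is controlling the scale-to-scale cross-contamination in this decomposition: the bumps $\phi_k$ cannot have strictly disjoint Fourier supports if $g$ is to be nonnegative, so one must rely on rapid decay of $\ft{\phi_k}$ outside $I_k$ and carefully match the parameters $A_k$, $c_k$, the placement of the blocks $J_k$, and the densities of $\Lam_k$ against one another. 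Reconciling these matching conditions with the three external constraints --- pointwise positivity of $g$, membership of $g$ in $L^p(\R)$ for every $p>1$, and the sharp lacunarity $\lam_{n+1}/\lam_n>1+\eps_n$ --- is the heart of the argument.
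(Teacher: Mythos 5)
Your overall strategy is quite different from the paper's, and unfortunately it contains a fatal gap at the central step. You want to partition $\Lambda=\bigcup_k\Lambda_k$ into blocks with each $\Lambda_k$ contained in a bounded interval $J_k=[u_k,v_k]$, and then argue that the frequency-localized component of $h=f*\tilde g$ at scale $k$ is (nearly) bandlimited of type $A_k$, bounded, and vanishes on $\Lambda_k$, so that ``Landau-type uniqueness'' forces it to be zero. But a \emph{finite} point set is never a uniqueness set for a Bernstein space of bounded entire functions of given exponential type: for any finite set $\{\lambda_1,\dots,\lambda_N\}$ and any $A>0$, the function $\prod_{j=1}^N \frac{\sin(\varepsilon(x-\lambda_j))}{\varepsilon(x-\lambda_j)}$ with $\varepsilon<A/N$ is a nonzero bounded entire function of exponential type less than $A$ that vanishes on the whole set. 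Density of a finite block has no bearing on this; the Bernstein space is infinite-dimensional, so the kernel of the evaluation map at finitely many points always has infinite dimension. You also appear to be conflating Landau's 1964 theorem (completeness of a sparse regular exponential sequence on large sets, which is what \cite{Lan64} actually proves and what the paper uses to build a ``Landau system'') with a different family of Landau results on necessary density conditions for sampling, which in any case only apply to infinite sets over all of $\R$. So even setting aside the cross-contamination between scales that you flag yourself, the block-by-block uniqueness mechanism you propose cannot work.

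For comparison, the paper avoids any uniqueness theorem for bandlimited functions and proceeds entirely constructively on the Fourier side. It passes to the space $A^p(\R)$ and seeks a nonnegative $w$ together with trigonometric polynomials $Q_k$ with sparse real spectrum such that $\{w\cdot Q_k\}$ is dense in $A^p(\R)$; then the frequencies of $Q_k$ become the sequence $\{\lambda_n\}$ and $g=\ft{w}$ is the generator. The sparseness is manufactured not by choosing a sparse uniqueness set but by a dilation trick (Lemma~\ref{lemQ1.5}): starting from polynomials $P$ and $\gam$ with integer spectrum obtained from Lemma~\ref{lemQ1.2}, one forms $P(\nu_n t)$ and $\gam(\nu_n t)$ with rapidly increasing integers $\nu_n$, so that the spectrum of the resulting $Q$ consists of widely spaced blocks $\{\nu_n,2\nu_n,\dots,L\nu_n\}$ shifted by $\sigma_n$, automatically satisfying the lacunarity $\lambda_{j+1}/\lambda_j>1+\delta$. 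The iterative construction of $w$ (starting from a nonnegative function $u_0$ coming from a Landau system via Theorem~\ref{thm:L2.7}, and multiplying by nonnegative polynomial factors $\Gam_k$ close to $1$ in $A^{p_k}(\T)$) then ensures convergence in every $A^p(\R)$, $p>1$, and completeness. This machinery sidesteps both the block-uniqueness problem and the cross-contamination problem your proposal runs into.
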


\subsection{} 
Next, we recall that not only the size, but also the arithmetic 
structure of the set is important. 
It was mentioned above that a $2$-generating set 
(and hence  also a $p$-generating set, $p \le 2$)
cannot be contained in any arithmetic progression.

The question of whether every 
``almost integer'' sequence 
of the form \eqref{eq:A1.18} is 
$p$-generating, $1<p<2$,
was posed in \cite[Section 7]{Ole97}
and has since remained open.
Our second theorem  provides the question 
with an affirmative answer
and extends the result from \cite{Ole97}
to all values $1<p \le 2$.

\begin{thm}
\label{thm:Perturb}
Let $\{\lambda_n\}_{n=1}^\infty$ be any 
real sequence satisfying
$\lambda_n = n+\alpha_n$, where 
$0\neq \alpha_n \to 0$. 
Then there is a nonnegative function 
$g \in \cap_{p>1} L^p(\R)$
such that  the system 
$\{g(x - \lam_n)\}_{n=1}^{\infty}$
is complete in the space $L^p(\R)$ for every $p>1$.
\end{thm}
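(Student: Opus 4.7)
The plan is to proceed by a duality argument. By Hahn--Banach, the desired completeness is equivalent to the uniqueness statement that any $h\in L^{p'}(\R)$ (with $1/p+1/p'=1$) satisfying $\int h(x)\,g(x-\lam_n)\,dx=0$ for all $n$ must vanish. Setting $\phi:=h*\check g$ with $\check g(x)=g(-x)$, the hypothesis reads $\phi(\lam_n)=0$ for every $n$, and the goal becomes to force $h\equiv 0$ (equivalently $\ft h\equiv 0$).

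The generator $g$ will be chosen adapted to the given $\{\lam_n\}$: nonnegative, in $\cap_{p>1} L^p(\R)$, with $\ft g$ everywhere positive and rapidly decaying. One convenient device is to take $g=g_0*g_0$ for a suitable nonnegative $g_0$, which automatically gives $g\ge 0$ and $\ft g=\ft{g_0}^{\,2}\ge 0$. Positivity of $\ft g$ is needed to pass from $\phi\equiv 0$ to $\ft h\equiv 0$ via $\ft\phi=\ft h\cdot\overline{\ft g}$, while fast decay of $\ft g$ will make $\phi$ sufficiently regular---analytic in a strip, if the decay is exponential---to enable pointwise arguments.

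The main step is then to deduce $\phi\equiv 0$ from $\phi|_\Lam=0$. The relevant features of $\Lam$ are its natural density $1$ and the condition $0\ne\al_n\to 0$. Density $1$ is the Nyquist threshold for the Paley--Wiener space $PW_\pi$, so a density-$1$ sequence is borderline for uniqueness there; however, because $\al_n\ne 0$ for every $n$, the sequence $\Lam$ breaks the lattice structure of $\Z$ and, by Kadec-type arguments, should become a genuine uniqueness set. I would follow the strategy developed in \cite{Lev25}, which combines Landau's sampling theorem \cite{Lan64} with a careful choice of $g$ to produce $p$-generating perturbations of $\Z_{>0}$. The new difficulty is that $\{\al_n\}$ is now prescribed and arbitrary (up to $\al_n\ne 0$, $\al_n\to 0$), so $g$ must be built flexibly in terms of this specific sequence.

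The main obstacle I foresee is reconciling two competing constraints on $g$: (i) $\ft g$ nowhere vanishing (so that $\phi\equiv 0$ forces $h\equiv 0$), and (ii) $\phi=h*\check g$ restricted to a function class narrow enough that vanishing on the density-$1$ set $\Lam$ forces $\phi\equiv 0$. A direct Paley--Wiener argument would demand $\ft g$ compactly supported, incompatible with (i); hence the structure $\lam_n=n+\al_n$ with $\al_n\ne 0$ must be used in an essential way, not merely the density of $\Lam$. The technical crux will therefore be an ``almost-band-limited'' uniqueness lemma: if $\ft g$ decays fast enough, then vanishing of $\phi$ on $\{n+\al_n\}$ with $0\ne\al_n\to 0$ forces $\phi\equiv 0$. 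Establishing this lemma, presumably via complex analysis in a strip combined with a frequency-band bootstrap argument, is where I expect the heart of the work to lie.
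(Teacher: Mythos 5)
Your duality setup is correct, and you correctly identify that the positivity of $\ft{g}$ and the condition $\al_n\ne 0$ must both be used in an essential way. However, the route you sketch for the ``almost-band-limited uniqueness lemma''---complex analysis in a strip---is precisely the approach of \cite{OU18}, and as the introduction notes, that method is known to yield the conclusion only when the perturbations $\al_n$ are \emph{exponentially small}. For a function merely analytic in a strip of fixed width, the zero set $\{n+\al_n\}$ with $\al_n\to 0$ easily satisfies a Blaschke-type condition, so no complex-analytic uniqueness theorem will force $\phi\equiv 0$ from strip-analyticity plus growth plus vanishing on this set; the ``frequency-band bootstrap'' you allude to is exactly the step that has resisted all attempts by this route beyond the exponential regime. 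Your plan does not contain the idea needed to close this gap.

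The ingredient that the paper actually uses---and which does not appear in your proposal---is Olevskii's \cite{Ole97} difference-operator mechanism. Working on the Fourier side in $A^p(\R)$, one builds the weight $w$ (with $g=\ft{w}$) supported near the integers and exploits the identity $(\Delta^k q)(t)=\sum_n a_n(e^{2\pi i\al_n}-1)^k e^{2\pi i\lam_n t}$ for $q(t)=\sum_n a_n e^{2\pi i\lam_n t}$, where $\Delta\phi(t)=\phi(t+1)-\phi(t)$. Because $\al_n\ne 0$, one can \emph{divide}: given target integer-frequency coefficients $b_n$, the choice $c_n=b_n/(e^{2\pi i\al_n}-1)^{s-l}$ produces a polynomial on the spectrum $\{\lam_n\}$ whose $(s-l)$-th difference matches the desired approximant, while the lower-order differences carry extra small factors $(e^{2\pi i\al_n}-1)^{l-j}$ that tend to $0$. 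This is combined with an estimate (\lemref{lem:Petrurb_fin_diff}) that bounds the $A^p(\R)$-norm of a function supported in a neighborhood of $\{0,\dots,s-1\}$ by the norms of its iterated differences, closing an induction over $s$ steps. This algebraic device, not analyticity, is what turns the qualitative hypothesis $0\ne\al_n\to 0$ into the completeness statement; without it, your argument cannot get past the density-$1$ barrier.
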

We note that in this result we use perturbations of the positive integers only.

\subsection{}
One can impose a more rigid structure on the set $\Lam$
and require the successive differences
$\lambda_{n+1} - \lam_n$
to attain only finitely many different positive
values. A real sequence $\{\lam_n\}_{n=1}^{\infty}$ 
satisfying this condition  
is said to have   \emph{finite local complexity}.

Our third result shows that there exist
$p$-generating sets of finite local complexity 
for every $p > 1$, and moreover, 
the successive differences
$\lambda_{n+1} - \lam_n$
 may attain  as few as two
different values.

\begin{thm}
\label{thm:M1.2}
Let $a,b > 0$ be linearly
independent over the rationals.
Then there exist a real sequence 
$\{\lam_n\}_{n=1}^{\infty}$ with
$\lambda_{n+1} - \lam_n \in \{a,b\}$,
and a nonnegative Schwartz function $g$,
such that the system
$\{g(x - \lam_n)\}_{n=1}^{\infty}$
is complete in  $L^p(\R)$ for every $p>1$.
\end{thm}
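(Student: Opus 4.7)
The plan is to realize $\Lambda$ as a Sturmian sequence, reduce completeness to a uniqueness problem by duality, and attack the uniqueness via Weyl equidistribution combined with a Landau-type sampling argument.

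First, pick any irrational $\theta\in(0,1)$ and set
\[
\lambda_n = an+(b-a)\lfloor n\theta\rfloor = n\gamma-(b-a)\{n\theta\},\qquad \gamma:=a+(b-a)\theta.
\]
The differences $\lambda_{n+1}-\lambda_n$ take values in $\{a,b\}$ according as $\lfloor(n+1)\theta\rfloor=\lfloor n\theta\rfloor$ or not, and $\Lambda$ is uniformly discrete of Beurling density $1/\gamma$; the $\Q$-independence of $a,b$ ensures $\Lambda$ is not contained in any arithmetic progression. Choose a nonnegative Schwartz generator $g$ with $\widehat g$ strictly positive on $\R$, e.g.\ $g=|\psi|^2$ with $\psi$ a Gaussian. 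By Hahn--Banach duality, $\{g(x-\lambda_n)\}$ spans $L^p(\R)$ iff no nonzero $f\in L^{p'}(\R)$ satisfies $F(\lambda_n)=0$ for all $n$, where $F:=f*\widetilde g\in L^{p'}$ is bounded continuous with $\widehat F=\widehat f\cdot\overline{\widehat g}$; positivity of $\widehat g$ reduces the problem to proving $F\equiv 0$.

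The key step amplifies the pointwise constraint $F(\lambda_n)=0$ into a statement about $\widehat F$, exploiting the Sturmian structure. Lift $F$ to $\widetilde F(x,y):=F(x-(b-a)y)$ on $\R\times\T$, so that the vanishing condition becomes $\widetilde F(n\gamma,\{n\theta\})=0$ for all $n\geq 1$. Multiplying by the characters $e^{2\pi ik\{n\theta\}}$ ($k\in\Z$) and averaging in $n$, Weyl's equidistribution theorem converts these pointwise identities into a family of integrals of $\widetilde F$ against characters of $\T$ that must vanish, together with a ``discrete-time'' constraint in the $x$-variable along $n\gamma$. Combining this with the density in $\R$ of the ``Bragg set'' $(1/\gamma)\Z+(\theta/\gamma)\Z$ associated to $\Lambda$, Fourier inversion is expected to yield $\widehat F\equiv 0$.

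The main obstacle is the transition from a bandlimited setting to the Schwartz-tailed case. If $\widehat F$ were supported in a fixed interval $[-\sigma,\sigma]$ with $\sigma<\pi/\gamma$, Landau's 1964 sampling theorem would apply directly: $\Lambda$, of density $1/\gamma>\sigma/\pi$, is a sampling set for the corresponding Paley--Wiener space of $L^{p'}$ functions, and vanishing on $\Lambda$ forces $F\equiv 0$. But here $\widehat F=\widehat f\cdot\overline{\widehat g}$ has unbounded spectrum, only rapid decay. Bridging this gap requires a careful iteration of the Landau argument across a dyadic frequency decomposition, using the dense Bragg spectrum of the Sturmian set to propagate bandlimited uniqueness across all scales; the nonnegativity of $g$ (and the sign properties of $\widehat g$) appear essential to glue these bands consistently. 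I expect this step to form the technical heart of the argument, along the lines of the approach of \cite{Lev25} for the perturbed-integer setting.
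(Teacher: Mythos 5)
Your proposal differs fundamentally from the paper's strategy, and the difference exposes a genuine gap.

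You fix a Sturmian sequence $\lambda_n = an + (b-a)\lfloor n\theta\rfloor$ at the outset and fix a Gaussian-type generator $g$, then try to prove completeness by duality plus Weyl equidistribution plus Landau sampling. The paper does something much softer: it does not predetermine the sequence at all. Instead, after rescaling so that $b=1$ and $a$ is irrational, it \emph{constructs} a Landau system $\{\lambda_n\}$ with $\lambda_{n+1}-\lambda_n\in\{1,a\}$ via an inductive approximation (Lemma~\ref{lem:L8.9}), and then applies the already-established machinery of Corollary~\ref{cor:L2.11} to produce a nonnegative Schwartz $g$ \emph{adapted to that particular Landau system}. The key device in Lemma~\ref{lem:L8.9} is to write the approximating trigonometric polynomial as $P(t)=\sum_{k=0}^{2L}e^{2\pi i k a t}Q_k(t)$ with $\spec(Q_k)\subset\Z$, shift by integers $|l|\le L$, and invert the resulting $(2L+1)\times(2L+1)$ Vandermonde matrix $\{e^{2\pi ikla}\}$ (nonsingular by irrationality of $a$) to read off the blocks $Q_k$ as integer-spectrum approximations of explicit continuous targets; since $0<h<1$, the completeness of $\{e^{2\pi int}\}_{n>N}$ in $C[-h/2,h/2]$ lets one choose the $Q_k$ with nested, increasing integer spectra, which is what makes the gaps come out in $\{1,a\}$. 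Crucially, \emph{which} Sturmian-looking pattern of $1$'s and $a$'s emerges is dictated by the approximation needs at each stage, not chosen in advance.

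The gap in your approach is concrete and twofold. First, there is no reason a \emph{fixed} sequence $\lambda_n = an+(b-a)\lfloor n\theta\rfloor$ should be a Landau system: that property is a strong completeness statement (for every $N$ and every Landau set $\Omega$, $\{e^{2\pi i\lambda_n t}\}_{n>N}$ spans $L^2(\Omega)$), and Landau's theorem provides \emph{existence} of such sequences, not a criterion that generic perturbations of $\Z$ (let alone a rigid Sturmian word) satisfy. Your equidistribution argument gives averaged information along $\Lambda$, but completeness of exponentials on intervals is not an averaged property, and the ``Bragg set'' $(1/\gamma)\Z+(\theta/\gamma)\Z$ being dense in $\R$ does not by itself give you the needed uniqueness. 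Second, you yourself flag the bandlimited-to-Schwartz transition as ``expected'' to work by ``iterating Landau across a dyadic decomposition''; as written this is not an argument. Indeed $F=f*\widetilde g$ with $f\in L^{p'}$, $p'$ large, is not bandlimited and is not even in $L^2$, so Landau's theorem (a statement about Paley--Wiener spaces) does not apply, and the Fourier-side product $\widehat f\cdot\overline{\widehat g}$ must be handled distributionally. The paper sidesteps all of this: the hard analysis was already done in Section~\ref{sec:L1} (Theorem~\ref{thm:L2.7}, Corollary~\ref{cor:L2.11}), where the generator is built together with the completeness proof; Section~\ref{sec:Q6} only needs to supply a Landau system of finite local complexity, and Lemma~\ref{lem:L8.9} is exactly the tool for that. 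I would suggest you drop the attempt to prove completeness for a fixed $(\Lambda,g)$ pair and instead aim at constructing a Landau system with gaps in $\{1,a\}$, which is where the Vandermonde trick is the missing idea.
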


The requirement that  $a, b$ be linearly
independent over the rationals is crucial, for otherwise
the points $\{\lam_n\}$ lie in some arithmetic progression
and the result fails.

Note that  in \thmref{thm:M1.2} the function $g$
belongs to the Schwartz class, so it is both
smooth and has fast decay. To the contrary, 
in Theorems \ref{thm:M1.1} and \ref{thm:Perturb}
 the function $g$ can be chosen smooth but in general cannot decay fast,
see Sections \ref{sec:R1.5} and \ref{sec:R2.7}.

\subsection{}
The rest of the paper  is organized as follows. 
In \secref{sec:P1} we 
recall some necessary background
and fix notation that will be used throughout the paper.

In \secref{sec:L1} we review and extend 
the approach from \cite{Lev25},
 based on Landau's  classical result 
\cite{Lan64}, that
allows one to construct uniformly discrete 
$p$-generating  sets for every $p>1$
with a nonnegative Schwartz generator.

In \secref{sec:Q1} 
we  construct
a positive sequence $\{\lam_n\}_{n=1}^{\infty}$ 
which is   $p$-generating for every $p>1$, and 
the ratios $\lambda_{n+1}/\lambda_n$ tend to $1$ arbitrarily slowly
(\thmref{thm:M1.1}).
In \secref{sec:Perturb} 
we show that every ``almost integer'' sequence
is $p$-generating for every $p > 1$ (\thmref{thm:Perturb}).
Finally, in \secref{sec:Q6} we construct
a   sequence $\{\lam_n\}_{n=1}^{\infty}$ 
which is   $p$-generating for every $p>1$, and 
the differences $\lambda_{n+1} - \lambda_n$ 
attain only two different values
(\thmref{thm:M1.2}).


\section{Preliminaries}
\label{sec:P1}

In this section we recall some necessary background
and fix notation that will be used throughout the paper.

\subsection{}
The \emph{Schwartz space}  $\S(\R)$ consists of all infinitely smooth
functions $\varphi$ on $\R$ such that for each $n,k \geq 0$, the seminorm
\begin{equation}
\|\varphi\|_{n,k} := \sup_{x \in \R} (1+|x|)^n  |\varphi^{(k)}(x)|
\end{equation}
is finite.  It is a topological linear space whose topology is
induced by the metric
\begin{equation}
\label{eq:cinfmetric}
d(\varphi, \psi) := \sum_{n,k \ge 0}
2^{-(n+k)} \frac{  \| \varphi - \psi \|_{n,k} }{1 + \| \varphi - \psi \|_{n,k}}
\end{equation}
which also makes $\S(\R)$ a complete, separable metric space.

A \emph{tempered distribution} on $\R$ is a 
linear functional  on the Schwartz space   $\S(\R)$
which is continuous with respect to the 
metric \eqref{eq:cinfmetric}.
We use  $\alpha(\varphi)$ to denote  the action of
a  tempered distribution $\alpha$ on a
Schwartz function $\varphi$.

We denote by  $\supp(\al)$ the closed
support of a tempered distribution $\al$.

If $\varphi$ is a Schwartz function on $\R$ then we define its Fourier transform by
\begin{equation}
\ft{\varphi}(x)=\int_{\R} \varphi(t) e^{-2 \pi i xt} dt.
\end{equation}
The Fourier transform of a  tempered distribution 
$\alpha$ is defined by 
$\ft{\alpha}(\varphi) = \alpha(\ft{\varphi})$.

\subsection{}
Let $A^p(\T)$, $1\le p < \infty$, denote the Banach space of 
 Schwartz distributions $\alpha$ on the circle $\T = \R / \Z$
whose Fourier coefficients $\{\ft{\alpha}(n)\}$, $n \in \Z$,
belong to $\ell^p(\Z)$, endowed with the norm 
$\|\alpha\|_{A^p(\T)} := \|\ft{\alpha}\|_{\ell^p(\Z)}$.
For $p=1$ this is the classical Wiener algebra $A(\T)$ 
of continuous functions with an absolutely convergent
Fourier series.

 We also use $A^p(\R)$, $1\le p < \infty$, to denote
 the Banach space of 
tempered distributions $\alpha$ on $\R$
whose Fourier transform $\ft{\alpha}$
is in $L^p(\R)$,   with the norm 
$\|\alpha\|_{A^p(\R)} := \|\ft{\alpha}\|_{L^p(\R)}$. 

Note that $A^1(\T)$ and $A^1(\R)$ are function spaces,
continuously embedded in $C(\T)$ and $C_0(\R)$ respectively.
Similarly, for $1 < p \le 2$ the space $A^p$ (on either $\T$ or $\R$)
is a function space, continuously embedded in $L^{q}$,
 $q = p/(p-1)$, by the Hausdorff-Young inequality.
On the other hand, $A^p$ is not a function space for $p>2$.

\subsection{}
If $1 \le p < q < \infty$ then we have 
$A^p(\T) \sbt A^q(\T)$, and moreover, the
inequality 
\begin{equation}
	\label{eq:P7.34}
\|\alpha\|_{A^q(\T)} \le
\|\alpha\|_{A^p(\T)}
\end{equation}
holds for every $\alpha \in A^p(\T)$.
To the contrary, there exists neither inclusion 
nor norm inequality between different 
$A^p(\R)$ spaces.

If $\alpha \in A^p$ and $f \in A^1$
(on either $\T$ or $\R$)
then the product $\alpha \cdot f$ is well defined, and  
\begin{equation}
	\label{eq:P7.12}
	\|\alpha \cdot f \|_{A^p} \le
	\|\alpha  \|_{A^p} \cdot  \|f  \|_{A^1}.
\end{equation}

\subsection{}
We introduce an auxiliary norm $\nmb{\cdot}$ on the Schwartz space $\S(\R)$, defined by
\begin{equation}
\nmb{u}  := 10 \cdot \sup_{x \in \R}  (1 + x^2) |\ft{u}(x)|, \quad u \in \S(\R).
\end{equation}

If $u \in \S(\R)$,
$f\in  A^p(\R)$   and $1 \le p \le q < \infty$,
then  
\begin{equation}
	\label{eq:P7.63}
		\|u  \|_{A^p(\R)} \le \nmb{u}, \quad
	\|u \cdot f \|_{A^q(\R)} \le
	\nmb{u}  \| f  \|_{A^p(\R)}.
\end{equation}
Indeed, the first estimate is elementary 
 while the second     follows by an application 
of Young's convolution inequality.

If $u \in \S(\R)$ and $f\in  A^p(\T)$, $1 \le p < \infty$,
then $f$ may be
considered also as a $1$-periodic tempered distribution  on $\R$,
so the product $u \cdot f$ makes sense and is well defined.

\begin{lemma}
\label{lem:P3.41}
Let	$u \in \S(\R)$ and
$f\in  A^p(\T)$, $1 \le p < \infty$. Then
\begin{equation}
\label{eq:P3.41}
\| u \cdot f \|_{A^p(\R)} \le \nmb{u}   \| f  \|_{A^p(\T)}.
\end{equation}	
\end{lemma}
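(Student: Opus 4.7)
The plan is to pass to the Fourier side and reduce the estimate to a convolution-type inequality between a discrete $\ell^p$ sequence and the rapidly decaying function $\ft{u}$. Writing $f$ as its Fourier series $f = \sum_{n \in \Z} \ft{f}(n) e^{2\pi i n x}$ on $\T$, viewed as a tempered distribution on $\R$, the product $u \cdot f$ makes sense, and taking Fourier transforms termwise gives
\begin{equation*}
\widehat{u \cdot f}(\xi) \;=\; \sum_{n \in \Z} \ft{f}(n)\, \ft{u}(\xi - n).
\end{equation*}
So the claim reduces to showing $\|\sum_n \ft{f}(n)\,\ft{u}(\,\cdot\, - n)\|_{L^p(\R)} \le \nmb{u}\,\|\ft{f}\|_{\ell^p(\Z)}$.

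The key quantitative input is the pointwise bound $|\ft{u}(\xi)| \le \nmb{u}\cdot w(\xi)$ with $w(\xi) := 1/(10(1+\xi^2))$, which is immediate from the definition of $\nmb{\cdot}$. Thus it suffices to estimate $\phi(\xi) := \sum_n |\ft{f}(n)|\, w(\xi - n)$ in $L^p(\R)$ by $\|\ft{f}\|_{\ell^p(\Z)}$. The natural tool is a discrete Hölder inequality applied pointwise: writing $w(\xi-n) = w(\xi-n)^{1/p}\,w(\xi-n)^{1/p'}$ and using Hölder with exponents $p,p'$ yields
\begin{equation*}
\phi(\xi)^p \;\le\; \Bigl(\sum_n |\ft{f}(n)|^p\, w(\xi-n)\Bigr)\,\Bigl(\sum_n w(\xi-n)\Bigr)^{p-1}.
\end{equation*}
Integrating in $\xi$ and using Fubini on the first factor gives $\|\phi\|_{L^p}^p \le W^{p-1}\,\|w\|_{L^1}\,\|\ft{f}\|_{\ell^p}^p$, where $W := \sup_{\xi} \sum_{n} w(\xi-n)$.

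The remaining task, and really the only point that needs any computation, is to verify that both $\|w\|_{L^1}$ and $W$ are at most $1$, so that $W^{1-1/p}\|w\|_{L^1}^{1/p} \le 1$ for every $p \in [1,\infty)$. A direct computation gives $\|w\|_{L^1} = \pi/10$, and the standard identity $\sum_{n \in \Z} 1/(1+(\xi-n)^2) = \pi \sinh(2\pi)/(\cosh(2\pi)-\cos(2\pi\xi))$ (or a crude termwise estimate) shows that $W \le \pi/10$ as well. Both are comfortably below $1$; this is precisely the purpose of the factor $10$ in the definition of $\nmb{u}$. I don't anticipate a real obstacle: the whole argument is a standard convolution-style estimate, and the only thing to be careful about is justifying the termwise Fourier transform of $u \cdot f$, which is routine since $\{\ft{f}(n)\}$ has polynomial growth and $u \in \S(\R)$, so the series converges in $\S'(\R)$.
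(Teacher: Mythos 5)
Your proof is correct, and the approach (pass to the Fourier side, reduce to the semi-discrete convolution bound, and apply H\"older/Schur test) is the standard direct argument; it matches the spirit of the paper, which defers to \cite{LT24} for the same estimate. One small numerical slip: the supremum $W = \sup_\xi \sum_n w(\xi - n)$ equals $\frac{\pi\coth\pi}{10} \approx 0.3154$, which is slightly \emph{larger} than $\pi/10 \approx 0.3142$ (the sum of samples of a decaying bump exceeds its integral near a peak), so the claim ``$W \le \pi/10$'' is not quite right. This is immaterial, since the argument only requires $W^{p-1}\|w\|_{L^1} \le 1$ for all $p \ge 1$, which follows from $W < 1$ and $\|w\|_{L^1} < 1$ — both of which hold with room to spare, and are indeed why the factor $10$ is built into $\nmb{\cdot}$.
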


For a proof see \cite[Lemma 2.1]{LT26}.

\subsection{}
For $0 < h < 1/2$  we denote by $\Delta_h$ the ``triangle function'' on $\T$ vanishing
outside $(-h, h)$, linear on $[-h, 0]$ and on $[0,h]$, and satisfying
$\Delta_h(0)=1$. Then $\ft{\Delta}_h(0) = h$, and
\begin{equation}
\label{estim_delta}
\|\Delta_h\|_{A^p(\T)} \leq h^{(p-1)/p}, \quad 1 \le p < \infty.
\end{equation}

Indeed, to obtain the estimate \eqref{estim_delta} one can use the fact 
that Fourier coefficients of $\Delta_h$ are real and nonnegative,
hence $\|\Delta_h\|_{A(\T)} = \sum_n  \ft{\Delta}_h(n)=\Delta_h(0)=1$.
 Moreover, we have $\ft{\Delta}_h(n) \le \int_{\T} \Delta_h(t) dt = h$ for every $n\in\Z$,
and so $\|\Delta_h\|_{A^p(\T)}^p = \sum_n  \ft{\Delta}_h(n)^p\le h^{p-1}$.

For $0 < h < 1/4$ we also use $\tau_h$ to
denote the ``trapezoid function'' on $\T$ which vanishes
outside $(-2h, 2h)$, is equal to $1$ on $[-h, h]$, and
is linear on $[-2h, -h]$ and on $[h,2h]$. Then 
  $\ft{\tau}_h(0) = 3h$, and 
\begin{equation}
\label{eq:tauhap}
\|\tau_h\|_{A^p(\T)} \leq 3 h^{(p-1)/p}, \quad 1 \le p < \infty,
\end{equation}
which follows from \eqref{estim_delta} and the fact that
$\tau_h(t) = \Delta_h(t + h) + \Delta_h(t) + \Delta_h(t-h)$.

\subsection{}
By a \emph{trigonometric polynomial} we mean a finite sum of the form
\begin{equation}
\label{eq:P1.1}
P(t) = \sum_{j} a_j e^{2\pi i \sigma_j t}, \quad t \in \R,
\end{equation}
where $\{\sigma_j\}$ are distinct real numbers, and $\{a_j\}$ are complex numbers. 

By the \emph{spectrum} of $P$ we mean the set 
$\spec(P) := \{\sigma_j : a_j \neq 0\}$.
We observe that if  $P$
  has integer spectrum, $\spec(P) \sbt \Z$,
  then $P$ is $1$-periodic, that is, 
  $P(t+1) = P(t)$. In this case, $P$ may be
  considered also as a function on $\T$.

By the \emph{degree} of $P$ we mean the number
$\deg(P) := \min \{r \ge 0 : \spec(P) \sbt [-r,r]\}$.

For a trigonometric polynomial
\eqref{eq:P1.1} we use the notation
\begin{equation}
\label{eq:P3.4}
\|\ft{P}\|_p = (\sum_j |a_j|^p)^{1/p}.
\end{equation}
If $f \in A^p(\R)$ and $P$ is a trigonometric polynomial, then 
\begin{equation}
\label{est_prod_2}
\|f \cdot P \|_{A^p(\R)} \le \|\ft{P} \|_{1}  \cdot \|f  \|_{A^p(\R)}.
\end{equation}


\section{Landau systems and completeness of weighted exponentials}
\label{sec:L1}

In this section we review and extend 
the approach from \cite{Lev25},
 based on Landau's classical result  \cite{Lan64}, 
which allows one to construct uniformly
discrete $p$-generating sets $\Lam$ 
for every $p>1$. Moreover, 
by some additional arguments  we 
prove that the ``generator'' $g$
 can be chosen to be a nonnegative Schwartz function.

\subsection{}
We begin by introducing some terminology.

\begin{definition}
A set $\Om \sbt \R$ will be called a 
\emph{Landau set} if it has the form
\begin{equation}
\label{eq:L2.2}
\Om = \Om(L, h) = \bigcup_{|l| \le L}
[l - \tfrac1{2} h,  
l + \tfrac1{2} h] 
\end{equation}
where $L$ is a positive integer  and  $0< h < 1$.
\end{definition}

Thus a Landau set consists of a finite number of disjoint closed 
intervals of length strictly less than one, 
whose centers lie at integer points.

\begin{definition}
A real sequence $\{\lam_n\}_{n=1}^{\infty}$ will
be called a \emph{Landau system} if 
for every $N$ the system 
$\{e^{2 \pi i \lam_n t}\}$, $n>N$, is complete
in $L^2(\Om)$ for every Landau set $\Om \sbt \R$.
\end{definition}

Landau \cite{Lan64} proved   
that for any $\eps>0$
there exists a Landau system 
$\{\lam_n\}_{n=1}^{\infty}$ 
such that 
$|\lam_n - n| < \eps$ for all $n$.
Using this result we can easily obtain:

\begin{prop}
\label{prop:L5.1}
There exists  a Landau system 
$\{\lam_n\}_{n=1}^{\infty}$ 
satisfying  $\lam_n  = n + o(1)$.
\end{prop}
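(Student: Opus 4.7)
The plan is a diagonal construction using Landau's theorem as a black box. For each integer $k \ge 1$, I would first invoke Landau's theorem with $\varepsilon = 1/k$ to obtain a Landau system $\{\mu_n^{(k)}\}_{n=1}^\infty$ satisfying $|\mu_n^{(k)} - n| < 1/k$ for all $n$. Then, choosing an increasing sequence of positive integers $1 = N_1 < N_2 < \cdots$ (to be specified by the argument), I would set
\[
\lambda_n := \mu_n^{(k)} \quad \text{for } N_k \le n < N_{k+1}.
\]
This definition immediately yields $|\lambda_n - n| < 1/k$ whenever $n \ge N_k$, so $\lambda_n = n + o(1)$ holds automatically.

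The substantive task is to verify that $\{\lambda_n\}$ is itself a Landau system, i.e., that for every truncation index $M$ and every Landau set $\Omega$ the tail $\{e^{2\pi i \lambda_n t}\}_{n > M}$ is complete in $L^2(\Omega)$. A useful preliminary reduction is the monotonicity observation that if $\Omega \sbt \Omega'$ are Landau sets, then completeness of a family of exponentials in $L^2(\Omega')$ implies completeness in $L^2(\Omega)$ (by extending any $f \in L^2(\Omega)$ to $L^2(\Omega')$ by zero off $\Omega$). Consequently it is enough to verify the completeness condition for a countable exhausting family, for instance $\Omega_j := \Omega(j, 1 - 1/j)$, since every Landau set is contained in some $\Omega_j$.

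The hard part will be choosing the thresholds $N_k$ so that this countable collection of tail-completeness requirements can be met simultaneously. The obstacle is that $\{\lambda_n\}_{n > M}$ is not a tail of any single Landau system $\{\mu_n^{(k)}\}$; it is an infinite concatenation of finite chunks drawn from different Landau systems, so $L^2(\Omega_j)$-completeness cannot be inherited directly from any one $\mu^{(k)}$. To overcome this, I would choose $N_{k+1}$ inductively on $k$: having fixed $\lambda_n$ for $n < N_{k+1}$, I take $N_{k+1}$ so large that the finite chunk $\{\mu_n^{(k)} : N_k \le n < N_{k+1}\}$ already forces the orthogonal complement, inside $L^2(\Omega_j)$ for all $j \le k$ and all $M \le N_k$, to lie within an error that shrinks as $k \to \infty$; since every subsequent chunk only enlarges the span, the limiting tail $\{\lambda_n\}_{n > M}$ remains complete. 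Making this step quantitative---extracting, from the Landau-system property of $\{\mu_n^{(k)}\}$, an explicit finite $N_{k+1}$ that handles the finitely many relevant pairs $(j, M)$---is the delicate technical point of the proof.
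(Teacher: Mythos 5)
Your construction of $\lambda_n$ is exactly the one the paper uses, and you correctly identify both the obstacle (the concatenated sequence is not a tail of any single Landau system) and the reduction to a countable exhausting family $\Omega_1 \subset \Omega_2 \subset \cdots$ via the observation that completeness on a larger Landau set implies completeness on a smaller one. What is missing is the one concrete device that makes the inductive choice of $N_{k+1}$ actually work, and the formulation you offer in its place does not hold up.

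Your closing step speaks of choosing $N_{k+1}$ so that the $k$-th chunk ``forces the orthogonal complement, inside $L^2(\Omega_j)$\dots, to lie within an error that shrinks.'' The orthogonal complement of the span of a finite family of exponentials in $L^2(\Omega_j)$ is an infinite-dimensional subspace; it does not ``shrink'' in any norm, and there is no scalar quantity here that can be made small by taking $N_{k+1}$ large. So as stated the inductive requirement is not well-defined, and the subsequent assertion that ``every subsequent chunk only enlarges the span'' (true, but a span that grows monotonically need not become dense) does not rescue it. The paper's resolution is to make the inductive step entirely concrete: fix once and for all a sequence $\{\chi_k\}$ dense in the Schwartz space, and at the $k$-th step use the Landau-system property of $\{\mu_n^{(k)}\}$ to pick a single trigonometric polynomial $P_k$ in the span of the $k$-th chunk with $\|P_k - \chi_k\|_{L^2(\Omega_k)} < k^{-1}$. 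Then, for any Landau set $\Omega$ and any $N$, one has $\Omega \subset \Omega_k$ and $N_k > N$ for all large $k$, so $\{P_k\}_{k \ge k_0}$ lies in the span of the tail and approximates the dense family $\{\chi_k\}$ in $L^2(\Omega)$; completeness follows. Replacing your vague ``error on the orthogonal complement'' criterion with this concrete approximation target for a fixed dense sequence is precisely the idea your proof needs.
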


\begin{proof}
Indeed,  let
$\{\chi_k\}_{k=1}^{\infty}$  be 
a sequence dense in the Schwartz space,
and fix a sequence of  Landau sets
 $\Om_k = \Om(L_k, h_k)$ 
such that $L_k \to + \infty$ and $h_k \to 1$.
By Landau's theorem \cite{Lan64},
 for each $k$ there is a Landau system
$\{\lam^{(k)}_n\}_{n=1}^{\infty}$ 
satisfying
$|\lam^{(k)}_n   - n| < k^{-1}$ for all $n$.
In particular,  for every $N$ the system 
$\{ e^{2 \pi i \lam^{(k)}_n t}\}$, $n>N$,
is complete in  $L^2(\Om_k)$. This allows
us to construct by induction a sequence of 
 trigonometric polynomials 
\begin{equation}
\label{eq:L74.1}
P_k(t) = \sum_{N_k < n \le N_{k+1}} c_n  e^{2 \pi i \lam^{(k)}_n t} 
\end{equation} 
such that $\|P_k - \chi_k\|_{L^2(\Om_k)} < k^{-1}$.
In turn, we next construct a sequence
$\{\lam_n\}_{n=1}^{\infty}$ defined by
$\lam_n := \lam^{(k)}_n$ if $n$ is a positive
integer belonging to the
 interval $(N_{k}, N_{k+1}]$.
  It follows that
$|\lam_n - n| < k^{-1}$ for $n > N_k$, 
so that $\lam_n = n + o(1)$.

Let us show that
$\{\lam_n\}_{n=1}^{\infty}$ 
is a Landau system. Indeed, 
let $\Om = \Om(L, h)$ be a Landau set.
Then for all sufficiently large $k$ we have
 $\Om \sbt \Om_k$,   hence 
$\|P_k - \chi_k\|_{L^2(\Om)}
 \le k^{-1}$. The sequence 
 $\{P_k\}$ is therefore dense in the
 space $L^2(\Om)$.   Moreover, 
 for each $k$, the  
 polynomial $P_k$ lies in 
 the linear span of the system 
$\{e^{2 \pi i \lam_n t}\}$, $n>N_k$.
This implies that for every $N$ the
system $\{e^{2 \pi i \lam_n t}\}$, $n>N$,
 is complete in the space $L^2(\Om)$.
\end{proof}

\subsection{}
We now come to the main result of the present section.

\begin{definition}
We denote by $I_0(\R)$ the closed linear subspace 
of  the Schwartz space $\S(\R)$, that consists
 of all the functions $\varphi \in \S(\R)$ satisfying 
\begin{equation}
\label{eq:L2.4}
\varphi^{(j)}(n + \tfrac1{2}) = 0, \quad n \in \Z, \quad j=0,1,2,\dots
\end{equation}
\end{definition}
The space $I_0(\R)$ is  a 
complete, separable metric space
with the metric 
inherited from the Schwartz space.

\begin{thm} 
\label{thm:L2.7}
Given any Landau system 
$\{\lam_n\}_{n=1}^{\infty}$ 
one can find a nonnegative function $u \in I_0(\R)$,
such that  for every $N$ the system
\begin{equation}
\label{eq:L2.7}
\{u(t) e^{2 \pi i \lam_n t}\}, \quad n>N,
\end{equation}
is complete in $I_0(\R)$. Moreover,
$u$ can be chosen so that also $\ft{u}$ is nonnegative.
\end{thm}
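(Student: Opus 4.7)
The plan is to construct a specific nonnegative $u \in I_0(\R)$ with $\ft{u} \ge 0$, and then establish the completeness by a density argument that exploits the Landau system property in tandem with the Schwartz decay of $u$.

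For the construction, I would take a smooth nonnegative even bump $\om$ supported in $(-\tfrac14, \tfrac14)$ and set $v := \om * \om$, so that $v$ is smooth, nonnegative, even, supported in $[-\tfrac12, \tfrac12]$, vanishes to infinite order at $\pm\tfrac12$, and has $\ft{v} = \ft{\om}^{\,2} \ge 0$. Then pick a nonnegative Schwartz function $w$ with $\ft{w} \ge 0$ (a Gaussian works) and set
\[
u(t) := \sum_{n \in \Z} w(n)\, v(t-n).
\]
The disjoint supports of the translates $v(\cdot - n)$ inside the intervals $(n - \tfrac12, n + \tfrac12)$ force $u \in I_0(\R)$, and the rapid decay of $\{w(n)\}$ makes $u$ a Schwartz function; moreover Poisson summation gives $\ft{u}(\xi) = \ft{v}(\xi) \sum_{k \in \Z} \ft{w}(\xi + k)$, a product of two nonnegative factors.

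For the completeness, given $\varphi \in I_0(\R)$ I would approximate $\varphi$ in the Schwartz metric by functions $u \cdot P$, where $P(t) = \sum_k c_k e^{2\pi i \lam_k t}$ is a finite trigonometric polynomial with frequencies drawn from $\{\lam_n : n > N\}$. First I would replace $\varphi$ by $\varphi \chi$ for a smooth cutoff $\chi$ supported on a large Landau set $\Om = \Om(L, h)$; by the Schwartz decay of $\varphi$ the difference $\varphi - \varphi\chi$ is small in every seminorm when $L$ is large. On $\Om$ the function $(\varphi\chi)/u$ is smooth and bounded, because $\Om$ avoids a fixed neighborhood of $\Z + \tfrac12$ so $u$ is bounded below there, and the Landau system property supplies a polynomial $P$ making $\|(\varphi\chi)/u - P\|_{L^2(\Om)}$ arbitrarily small. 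Multiplication by $u$ then controls the error $\varphi\chi - uP$ on $\Om$, while off $\Om$ the Schwartz decay of $u$ makes $uP$ small in every seminorm, provided $L$ is chosen large enough in relation to $\sum |c_k|$.

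The main obstacle is the mismatch between Landau's $L^2$-completeness and the strictly stronger Schwartz topology on $I_0(\R)$. Deriving a Schwartz seminorm bound for $uP$ outside $\Om$ requires controlling not only $\sum |c_k|$ but also the growth of $|\lam_k|^K$ coming from the $K$-th derivative of $e^{2\pi i \lam_k t}$, and the Schwartz decay of $u$ must absorb both. Balancing the size of $\Om$, the accuracy of the $L^2(\Om)$-approximation, and the seminorm one aims to control is the delicate quantitative step, and this is where the strategy of \cite{Lev25} enters. The additional nonnegativity of both $u$ and $\ft{u}$ is handled entirely at the construction stage above and does not interact with the density argument itself.
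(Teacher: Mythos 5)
Your plan is genuinely different from the paper's, but the gap you yourself flag --- the mismatch between the $L^2(\Omega)$-completeness supplied by the Landau system and the Schwartz-seminorm control required for density in $I_0(\R)$ --- is a structural hole, not merely a ``delicate quantitative step'' that can be deferred. The approximation $\|(\varphi\chi)/u - P\|_{L^2(\Omega)} < \epsilon$ from Landau's theorem places no a priori bound on the coefficients of $P$, nor on the frequencies $\lambda_n$ that appear; so neither $\sum_k|c_k|$ nor $\sum_k|c_k||\lambda_k|^K$ is controlled. Multiplying by $u$ therefore cannot yield Schwartz-seminorm control of $uP-\varphi\chi$ --- not off $\Omega$, as you note, and not even on $\Omega$, since $L^2$-proximity of $P$ to $(\varphi\chi)/u$ says nothing about the derivatives of the difference. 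Your appeal to ``the strategy of \cite{Lev25}'' does not close this: the strategy there is a \emph{duality} argument, which requires the current approximant to be compactly supported so that $\alpha\cdot u_k$ is a distribution supported on a Landau set and \cite[Corollary~3.2]{Lev25} can be applied; this fails for your fixed, globally supported $u$. The paper sidesteps the issue entirely by constructing $u$ inductively as $u_k = u_{k-1}+\delta_k\sigma_k$ with each $u_k \in J_0(\R)$, running the duality argument for each compactly supported $u_k$ (annihilator $\alpha \Rightarrow \alpha\cdot u_k=0 \Rightarrow \alpha(\chi_k)=0$ via $\chi_k=u_k v_k$), and only then passing to the limit --- no quantitative control of the approximating polynomials is ever needed.

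A secondary issue: with $\omega$ supported inside the open interval $(-\tfrac14,\tfrac14)$, the function $v=\omega*\omega$ vanishes identically in a neighborhood of $\pm\tfrac12$, so $u$ vanishes identically near $\Z+\tfrac12$. Then for Landau sets $\Omega(L,h)$ with $h$ close to $1$ the intervals $[l-\tfrac12 h, l+\tfrac12 h]$ enter the zero set of $u$ and the division $(\varphi\chi)/u$ is undefined there, while taking $h$ small enough to avoid this forces the cutoff $\chi$ to discard a fixed-width band around $\Z+\tfrac12$ independently of $L$. This is fixable (take $\omega$ positive on $(-\tfrac14,\tfrac14)$ with closed support $[-\tfrac14,\tfrac14]$), but it adds friction. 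The construction of a nonnegative $u$ with $\ft u\ge 0$ via Poisson summation is fine and plays the same role as Lemma~\ref{lem:L2} in the paper; it is the completeness argument that needs to be rebuilt along the duality lines above.
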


This is a version of \cite[Theorem 4.2]{Lev25}
where in addition both $u$ and $\ft{u}$ are 
required to be nonnegative.
Note that this extra nonnegativity requirement necessitates  us 
to use  here a slightly different definition of 
the space $I_0(\R)$  than in \cite{Lev25}.

\subsection{}
\label{sec:J0.1}
Next, we turn to the proof of \thmref{thm:L2.7}.

Let $J_0(\R)$ be the linear  space consisting of
all the smooth, compactly supported functions  on $\R$
which  vanish in a neighborhood of $\Z  + \tfrac1{2}$.
Equivalently, $J_0(\R)$ consists of 
all the smooth functions whose support is contained
in some Landau set.

The space $J_0(\R)$ is a dense subspace of $I_0(\R)$,
see \cite[Lemma 4.3]{Lev25}.

\begin{lem}
\label{lem:L2}
Let $\chi \in J_0(\R)$. Then there is a function
$\sig$ with the following properties:
\begin{enumerate-num}
\item \label{it:L2.1}
$\sig \in J_0(\R)$;
\item \label{it:L2.2}
$\sig(t) > 0$ for $t \in \supp(\chi)$;
\item \label{it:L2.3}
 $\sig$ and $\ft{\sig}$ are both nonnegative functions.
\end{enumerate-num}
\end{lem}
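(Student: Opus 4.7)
The strategy is to construct $\sig$ as an autocorrelation, $\sig = \phi \ast \phi$, where $\phi$ is a carefully chosen real, even, nonnegative, smooth, compactly supported function. For any such $\phi$ one gets $\sig \ge 0$ automatically, as a convolution of two nonnegative functions, and $\ft{\sig} = \ft{\phi}^{\,2} \ge 0$, because a real and even $\phi$ has a real-valued Fourier transform. It then remains only to arrange that $\sig \in J_0(\R)$ and that $\sig > 0$ on $\supp(\chi)$; these are purely support and positivity conditions on $\phi \ast \phi$.

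To choose $\phi$, I would first observe that since $\chi \in J_0(\R)$ its support is contained in some Landau set $\bigcup_{|l| \le L_0}[l - h_0/2,\, l + h_0/2]$ with $L_0$ a positive integer and $0 < h_0 < 1$. Pick $\del$ with $h_0/4 < \del < 1/4$ (possible because $h_0 < 1$), and let $\phi_0 \in C^\infty_c(\R)$ be a standard bump that is even, nonnegative, supported in $[-\del, \del]$, and strictly positive on $(-\del, \del)$. Define
\[
\phi(t) = \sum_{|l| \le L_0} \phi_0(t - l).
\]
The symmetry of the index set together with the evenness of $\phi_0$ makes $\phi$ itself smooth, real, nonnegative, and even.

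Next, convolution gives
\[
\sig(t) = \sum_{|l|,|m| \le L_0} (\phi_0 \ast \phi_0)(t - l - m),
\]
and $\supp(\sig) \sbt \bigcup_{|k| \le 2L_0}[k - 2\del,\, k + 2\del]$. Since $4\del < 1$ this union is a Landau set, and in particular $\sig$ vanishes in a neighborhood of $\Z + \tfrac1{2}$, so $\sig \in J_0(\R)$. For any integer $l$ with $|l| \le L_0$, the pair $(0, l)$ contributes the term $(\phi_0 \ast \phi_0)(t - l)$ to the above sum, which is strictly positive on the open interval $(l - 2\del, l + 2\del)$. The choice $2\del > h_0/2$ ensures this interval contains $[l - h_0/2, l + h_0/2]$. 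All remaining terms in the sum are nonnegative, hence $\sig(t) > 0$ throughout $\supp(\chi)$.

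The only slightly delicate point is the choice of $\del$: the support constraint $4\del < 1$ (to fit into a Landau set) must coexist with the positivity constraint $2\del > h_0/2$ (to cover $\supp(\chi)$ strictly in the interior). These are compatible precisely because $h_0 < 1$. Beyond this balance, every step is an elementary consequence of standard facts about convolution.
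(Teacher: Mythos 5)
Your proof is correct, and it takes a genuinely different route from the paper's. The paper constructs $\sig$ directly as a weighted sum
\[
\sig(t) = \sum_{|l|\le L}\Bigl(1-\tfrac{|l|}{L+1}\Bigr)\rho\Bigl(\tfrac{t-l}{h'}\Bigr),
\]
where $\rho$ is first replaced by $(\rho*\rho)(2\,\cdot)$ so that $\ft{\rho}\ge0$, and the Fej\'er-type weights $1-\frac{|l|}{L+1}$ are chosen precisely so that the resulting trigonometric sum in $\ft{\sig}$ is the classical Fej\'er kernel, hence nonnegative. So the paper invokes the autocorrelation trick once (for the local bump $\rho$) and the Fej\'er kernel trick once more (for the translated sum). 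Your approach instead forms the \emph{unweighted} periodized bump $\phi=\sum_{|l|\le L_0}\phi_0(\cdot-l)$ and applies a single autocorrelation $\sig=\phi*\phi$ at the end, so that $\ft{\sig}=\ft{\phi}^{\,2}\ge0$ is automatic from the realness and evenness of $\phi$, with no Fej\'er kernel needed. This is a cleaner route to the Fourier positivity, at the modest cost of having to track the enlarged support after convolution: you need $\supp(\phi_0*\phi_0)\subset[-2\del,2\del]$ with $4\del<1$ (so the resulting set is still a Landau set), while also $2\del>h_0/2$ (so the single positive term $(\phi_0*\phi_0)(t-l)$ covers the $l$-th interval of $\supp\chi$); your observation that $h_0<1$ makes these two constraints compatible is exactly the point that needed care, and you handled it. Both proofs are short and elementary; yours trades the Fej\'er kernel identity for a one-line support bookkeeping argument.
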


\begin{proof}
Let $\rho$ be an even smooth function on $\R$,
with $\rho(t) > 0$ for $|t|<\tfrac1{2}$, and
$\rho(t)=0$ for $|t| \ge \tfrac1{2}$.
By replacing $\rho(t)$ with
$(\rho \ast \rho)(2t)$ we may  
assume that 
$\ft{\rho}$ is nonnegative.

We can find a positive integer $L$ and  $0< h < 1$
such that $\chi$ is supported by the Landau set
$\Om(L, h)$ given by \eqref{eq:L2.2}.
We choose $h'$ such that $h < h' < 1$, and set
\begin{equation}
 \label{eq:L1.2}
\sig(t) :=  \sum_{|l| \le L} \Big(1- \frac{|l|}{L+1} \Big) \rho \Big( \frac{t-l}{h'} \Big).
\end{equation}
Then $\sig$ is a smooth function supported on 
$\Om(L, h')$, hence  \ref{it:L2.1}  holds.
Moreover, $\sig(t) > 0$ for
$t \in \Om(L,h)$, so that
also condition \ref{it:L2.2} is satisfied.

Lastly, it is obvious that $\sig$ is a nonnegative function.  Moreover, 
\begin{equation}
 \label{eq:L1.4}
\ft{\sig}(x) = h'  \cdot \ft{\rho}(h'x) 
 \sum_{|l| \le L}  
  \Big(1- \frac{|l|}{L+1} \Big) e^{2 \pi i l x},
\end{equation}
and we observe that the sum in \eqref{eq:L1.4} 
 is the classical $L$'th order
Fej\'{e}r kernel, which is nonnegative. Hence
\ref{it:L2.3} holds as well and the lemma is established.
\end{proof}

\begin{proof}[Proof of  \thmref{thm:L2.7}]
Let $\{\lam_n\}_{n=1}^{\infty}$ 
be a Landau system, and choose 
a sequence 
$\{\chi_k\}_{k=1}^{\infty}$ in  $J_0(\R)$ 
which  is  dense in the space $I_0(\R)$.
 We  construct by induction a sequence of 
 functions $u_k \in J_0(\R)$, where
$u_k$ and $\ft{u}_k$  are  both nonnegative, 
together with an increasing sequence of positive integers $\{N_k\}$,
 and trigonometric polynomials 
\begin{equation}
\label{eq:pknkdeflmnk}
P_k(t) = \sum_{N_k < n < N_{k+1}} c_n  e^{2 \pi i \lam_n t} 
\end{equation} 
in the following way. We begin by setting $u_0 := 0$ and $N_0 := 0$.

At the $k$'th step of the induction, 
we apply \lemref{lem:L2} with $\chi = \chi_k$ and
obtain a function $\sig_k$.  Let
$u_k := u_{k-1} + \del_k \sig_k$,
where $\del_k>0$ is chosen small enough so that 
\begin{equation}
\label{eq:phiphikplusB1}
d(u_k, u_{k-1}) \le  2^{-k}, \quad
\max_{1 \le l \le k-1} d(u_k  \cdot P_l, u_{k-1} \cdot P_l ) \le  k^{-1} 2^{-k}.
\end{equation} 
We claim that there exists
a polynomial $P_k$ as in \eqref{eq:pknkdeflmnk}, such that
$d(u_k \cdot P_k, \chi_{k}) < k^{-1}$. 
Indeed, 
suppose that $\alpha$ is
 a  tempered distribution on $\R$,
which  annihilates   the system 
 $\{u_k(t) e^{2 \pi i \lam_n t}\}$, $n  > N_k$.
   This means that 
$\alpha \cdot u_k$ satisfies the condition
 $(\alpha \cdot u_k)^{\wedge}(- \lam_n) = 0$ 
for all $n > N_k$. 
Since $u_k \in J_0(\R)$,  the distribution
$\alpha \cdot u_k$ is supported on some
Landau set $\Om_k$. Since $\{\lam_n\}_{n=1}^{\infty}$
is a Landau system, this implies that
$\alpha \cdot u_k =0$, see \cite[Corollary 3.2]{Lev25}.
 On the other hand, we note that
$u_k(t)>0$ for $t \in \supp(\chi_k)$, so we can write
$\chi_k  =  u_k \cdot v_k$ where $v_k$  is a smooth
 function of compact support. Hence
\begin{equation}
\alpha(\chi_k) = \alpha(u_k \cdot v_k ) = 
(\alpha \cdot u_k)(v_k) = 0.
\end{equation}
By duality, this implies that the function
$\chi_k$ must lie in the closed linear subspace of $I_0(\R)$
spanned by the system 
 $\{u_k(t) e^{2 \pi i \lam_n t}\}$, $n  > N_k$.
As a consequence, we conclude that
indeed   there is a polynomial
\eqref{eq:pknkdeflmnk}
satisfying $d(u_k \cdot P_k, \chi_k) < k^{-1}$.

The sequence $\{ u_k  \}$ 
converges in the space $I_0(\R)$ to some   function
$ u \in I_0(\R)$ such that both $u$ and $\ft{u}$ are 
nonnegative. We have
\begin{equation}
\label{eq:phkchikapproxB2}
d(u \cdot P_k,  \chi_{k})
\le d(u_k \cdot P_k,  \chi_k)
+   \sum_{l=k+1}^{\infty} d(u_l \cdot P_k,  u_{l-1} \cdot P_k)  < 2 k^{-1}
\end{equation}
for every $k$, due to \eqref{eq:phiphikplusB1}.
As a consequence, the sequence 
$\{ u \cdot P_k \}$ is  dense in $I_0(\R)$.
 Moreover,  
$ u \cdot P_k$ belongs to the linear
span of the system 
$\{u(t) e^{2 \pi i \lam_n t}\}$, 
$n > N_k$,   due to 
 \eqref{eq:pknkdeflmnk}.
 This implies that for every $N$ the
system \eqref{eq:L2.7}
is complete in $I_0(\R)$.
\end{proof}

\subsection{}
As a consequence of the result just proved, we obtain:

\begin{cor} 
\label{cor:L2.11}
Given any Landau system 
$\{\lam_n\}_{n=1}^{\infty}$ 
one can find a nonnegative 
Schwartz  function $g$
on $\R$, such that 
for every $N$ the system
\begin{equation}
\label{eq:L2.11}
\{g(x-\lam_n)\}, \quad n>N,
\end{equation}
is complete in the space $L^p(\R)$
for every $p>1$.
\end{cor}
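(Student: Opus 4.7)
The plan is to take $g := \ft{u}$, where $u \in I_0(\R)$ is the nonnegative function supplied by \thmref{thm:L2.7}. Since $u$ is Schwartz with $\ft{u} \ge 0$, the function $g$ is automatically a nonnegative Schwartz function. It remains to establish completeness of the translates $\{g(x - \lam_n)\}_{n > N}$ in $L^p(\R)$ for every $N$ and every $p > 1$.

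Fix $p > 1$ and set $q = p/(p-1) < \infty$. By Hahn--Banach duality, completeness in $L^p(\R)$ is equivalent to the statement that any $h \in L^q(\R)$ annihilating $\{g(x - \lam_n)\}_{n > N}$ must vanish. Unwinding $g = \ft{u}$ and the definition of the distributional Fourier transform yields the Parseval-type identity
\begin{equation*}
\int h(x)\, g(x - \lam_n)\, dx \;=\; \ft{h}\bigl(u(t)\, e^{2\pi i \lam_n t}\bigr),
\end{equation*}
where the right-hand side is the dual pairing of $\ft{h} \in \S'(\R)$ with the Schwartz function $u(t)\, e^{2\pi i \lam_n t}$, which belongs to $I_0(\R)$ because multiplication by the smooth function $e^{2\pi i \lam_n t}$ preserves the infinite-order vanishing of $u$ on $\Z + \tfrac12$. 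Hence $\ft{h}$ annihilates $\{u(t) e^{2\pi i \lam_n t}\}_{n > N}$ in $I_0(\R)$; by \thmref{thm:L2.7} this system is complete in $I_0(\R)$, so $\ft{h}$ vanishes on all of $I_0(\R)$. Since $I_0(\R)$ is precisely the space of Schwartz functions vanishing to infinite order on $\Z + \tfrac12$, a standard bump-function argument yields $\supp(\ft{h}) \sbt \Z + \tfrac12$.

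The main obstacle is the final step: deducing $h = 0$ from $h \in L^q(\R)$ with $q < \infty$ and $\supp(\ft{h}) \sbt \Z + \tfrac12$. Consider the modulated function $\tilde{h}(x) := e^{-\pi i x} h(x)$, which lies in $L^q(\R)$ and whose Fourier transform $\ft{\tilde{h}}$ is supported on $\Z$. Every tempered distribution has finite order, so there exists $M$ such that $\ft{\tilde{h}}$ is locally of the form $\sum_{j \le M} c_{n,j} \delta_n^{(j)}$ near each integer $n$; hence $(e^{2\pi i \xi} - 1)^{M+1} \ft{\tilde{h}} = 0$. Taking the inverse Fourier transform converts this into the difference equation $\Delta^{M+1} \tilde{h} = 0$, where $\Delta f(x) := f(x+1) - f(x)$. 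Consequently, for almost every $x \in [0,1)$ the sequence $\{\tilde{h}(x + k)\}_{k \in \Z}$ is a polynomial in $k$ of degree at most $M$, and such a sequence can belong to $\ell^q$ with $q < \infty$ only if it is identically zero. Thus $\tilde{h}$, and therefore $h$, vanishes almost everywhere, completing the proof.
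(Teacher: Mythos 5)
The proposal is correct but takes a more self-contained route than the paper. The paper's proof is a single sentence: it invokes the fact (from \cite[Section 5]{Lev25}) that the Fourier transform maps $I_0(\R)$ continuously and densely into $L^p(\R)$ for every $p>1$; since $\bigl(u(\cdot)e^{2\pi i\lam_n\cdot}\bigr)^{\wedge}(x)=\ft{u}(x-\lam_n)=g(x-\lam_n)$, completeness of $\{u(t)e^{2\pi i\lam_n t}\}_{n>N}$ in $I_0(\R)$ immediately transfers to completeness of the translates in $L^p(\R)$. You instead unwind this via duality and supply an explicit proof of the density fact: an annihilator $h\in L^q(\R)$ yields $\ft{h}$ vanishing on $I_0(\R)$, hence $\supp(\ft{h})\sbt\Z+\tfrac12$, and you then show no nonzero $h\in L^q$, $q<\infty$, can have such spectral support, using the finite global order of tempered distributions together with the observation that a nonzero polynomial sequence is never in $\ell^q$. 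This buys self-containedness at the cost of re-deriving what the paper delegates to a reference. One step worth tightening: the existence of a \emph{uniform} bound $M$ on the order of the point masses at all $n\in\Z$ does follow from $\ft{\tilde h}$ being a \emph{tempered} distribution (hence of finite global order), but passing from ``global order $\le N$'' to ``coefficients $c_{n,j}$ vanish for $j>N$ at every $n$'' requires a short scaling argument that you should spell out rather than leave implicit.
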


Indeed, this
follows from \thmref{thm:L2.7} and the fact that
the Fourier transform maps $I_0(\R)$
continuously and densely into $L^p(\R)$, $p>1$,
see \cite[Section 5]{Lev25}. 

In particular, by applying
\corref{cor:L2.11} to a Landau system 
$\{\lam_n\}_{n=1}^{\infty}$ 
satisfying the condition
$\lam_n  = n + o(1)$, 
we obtain \cite[Theorem 1.1]{Lev25}
with the extra property that
the ``generator'' $g$ is taken to be non\-negative.


\section{Sparse complete sequences of translates}
\label{sec:Q1}

\subsection{}
In this section we  prove \thmref{thm:M1.1}.
First, by
using the fact that the Fourier transform is an isometric
isomorphism $A^p(\R) \to L^p(\R)$, 
we can reformulate \thmref{thm:M1.1}
 as a result about completeness
 of weighted exponentials in $A^p(\R)$.

\begin{thm}
\label{thm:M2.1}
For any positive sequence $\eps_n \to 0$ 
and any $\lam_0>0$, 
there is a nonnegative $w \in \cap_{p>1} A^p(\R)$
with $\ft{w}$ nonnegative,
and a positive real sequence $\{\lam_n\}_{n=1}^{\infty}$ 
satisfying
 \begin{equation}
\label{eq:H1.5}
\lam_{n+1} / \lam_n > 1 + \eps_n, \quad n = 0,1,2,\dots,
\end{equation}
such that  the system 
$\{w(t) e^{2 \pi i \lam_n t} \}_{n=1}^{\infty}$
is complete in the space $A^p(\R)$ for every $p>1$.
\end{thm}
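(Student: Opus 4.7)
The strategy is to extend the iterative construction of \thmref{thm:L2.7} to the sparse setting. Since a sparse sequence $\{\lam_n\}$ has density too low to be a Landau system, \corref{cor:L2.11} cannot be applied directly, and the weight $w$ cannot be built from a single Landau system. Instead, I would partition $\{\lam_n\}$ into blocks, each of which is a translate and dilation of a segment of a Landau system, and construct $w$ as a suitably tuned superposition of pieces adapted to each block.

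First I would design the sequence. Let $\{\mu_n\}_{n=1}^\infty$ be the Landau system from \propref{prop:L5.1}, so $\mu_n = n + o(1)$. Partition $\{1, 2, \dots\}$ into consecutive blocks $B_k = (N_{k-1}, N_k]$, and for $n \in B_k$ set
\begin{equation}
\lam_n := s_k + \gam_k \, \mu_{n - N_{k-1}},
\end{equation}
where $s_k \to \infty$ and $\gam_k > 0$ will be chosen recursively. Within a block, $\lam_{n+1}/\lam_n - 1 \approx \gam_k/\lam_n$, and this exceeds $\eps_n$ as soon as $\gam_k/s_k > 2\eps_{N_k}$ and the block length $N_k - N_{k-1}$ is kept smaller than $1/(2\eps_{N_k})$. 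Choosing $s_{k+1}$ much larger than $s_k + \gam_k (N_k - N_{k-1})$ then gives a strictly increasing positive sequence obeying \eqref{eq:H1.5}.

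Next I would imitate the iterative scheme of \thmref{thm:L2.7}. Fix a sequence $\{\chi_k\}$ dense in a natural dense subspace of $\bigcap_{p>1} A^p(\R)$. At step $k$, a translated and dilated version of \lemref{lem:L2}, adapted to the Landau-type set $s_k + \gam_k \Om(L_k, h_k)$, produces a Schwartz function $\omega_k \ge 0$ with $\ft{\omega_k} \ge 0$, positive wherever $\chi_k$ needs to be captured. Using the Landau-system property of $\{\mu_n\}_{n>N}$, rescaled by $\gam_k$, one obtains a trigonometric polynomial $P_k(t) = \sum_{n \in B_k} c_n e^{2\pi i \lam_n t}$ such that $(w_{k-1} + \del_k \omega_k) \cdot P_k$ approximates $\chi_k$ in every $A^p(\R)$-norm, via \lemref{lem:P3.41} and the scaled bounds on $\Del_h$ from \secref{sec:P1}. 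Taking the $\del_k$ small enough ensures that $w := \sum_k \del_k \omega_k$ converges in every $A^p(\R)$, $p > 1$, without disturbing the earlier approximations.

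The main obstacle is preserving the \emph{joint} nonnegativity of $w$ and $\ft{w}$ while keeping each $\omega_k$ concentrated near the block centre $s_k$: a real nonnegative function supported only near $s_k \gg 0$ necessarily has an oscillatory Fourier transform, so a naively shifted analogue of \lemref{lem:L2} cannot deliver $\ft{\omega_k} \ge 0$. The remedy I would pursue is to build each $\omega_k$ from a positive-definite construction $\omega_k = g_k \ast \widetilde{g_k}$ with $g_k \ge 0$, forcing $\ft{\omega_k} = |\ft{g_k}|^2 \ge 0$ automatically, and to arrange the supports of the $g_k$ so that $\omega_k$ still plays the role of $u_k$ in the interpolation step inside block $k$. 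A secondary difficulty is that, as recalled in \secref{sec:P1}, there is neither inclusion nor norm inequality between the spaces $A^p(\R)$, so the choices of $\del_k$, $\gam_k$ and $s_k$ must be balanced against the $A^p(\R)$-norms of $\omega_k$ and $P_k$ uniformly in $p > 1$.
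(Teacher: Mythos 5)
The proposal takes a genuinely different route from the paper's, and the route has serious gaps.

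\textbf{The main gap.} You write ``Using the Landau-system property of $\{\mu_n\}_{n>N}$, rescaled by $\gam_k$, one obtains a trigonometric polynomial $P_k(t)=\sum_{n\in B_k}c_ne^{2\pi i\lam_nt}$ such that $(w_{k-1}+\del_k\omega_k)\cdot P_k$ approximates $\chi_k$.'' This step fails, for two independent reasons. First, the Landau-system property is a property of \emph{infinite tails}: $\{e^{2\pi i\mu_nt}\}$, $n>N$, is complete in $L^2(\Om)$ for every Landau set. A single block $B_k$ only uses the \emph{finite} segment $\mu_1,\dots,\mu_{|B_k|}$, and a finite collection of exponentials is never complete in $L^2(\Om)$. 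Second, the dilation points the wrong way: if $\{e^{2\pi i\mu_n t}\}$, $n>N$, is complete in $L^2(\Om)$, then $\{e^{2\pi i\gam_k\mu_nt}\}$ is complete only in $L^2(\gam_k^{-1}\Om)$. Since sparsity forces $\gam_k\to\infty$, these sets shrink to a point, so the dilated exponentials cannot capture a $\chi_k$ of fixed (or growing) support. In short, making $\{\lam_n\}$ look locally like a scaled Landau system does not transfer any completeness to the sparse sequence.

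\textbf{Secondary gap.} Your remedy for the joint nonnegativity --- taking $\omega_k=g_k\ast\widetilde{g_k}$ with $g_k\ge 0$ --- indeed forces $\ft{\omega_k}\ge 0$, but the autocorrelation $g_k\ast\widetilde{g_k}$ is automatically supported in a set symmetric about the origin, and so cannot be ``concentrated near the block centre $s_k\gg 0$.'' So the very issue you correctly flagged (a nonnegative function supported far from $0$ cannot have a nonnegative Fourier transform) shows that the block-localized weights $\omega_k$ you need simply cannot exist; the difficulty is not technical but structural.

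\textbf{What the paper does instead.} The sparse sequence $\{\lam_n\}$ is \emph{not} a modified Landau system. The paper keeps a single dense Landau system $\{\sig_n\}$ with $\sig_n=n+o(1)$ and a single weight $u_0$ (from \thmref{thm:L2.7}), and then iteratively \emph{multiplies} the weight by $1$-periodic polynomials $\Gam_k$ close to $1$ with nonnegative Fourier coefficients; thus $w=u_0\prod_k\Gam_k$ and the supports never move. The sparsity enters via \lemref{lemQ1.5}: an approximating polynomial $H=\sum c_ne^{2\pi i\sig_nt}$ (frequencies dense, near integers) is replaced by $\Gam\cdot Q$ where $\Gam(t)=\prod_n\gam(\nu_nt)$ has integer spectrum and $Q(t)=\sum c_ne^{2\pi i\sig_nt}P(\nu_nt)$ has spectrum in $\bigcup_n(\sig_n+\{\nu_n,\dots,L\nu_n\})$, which is automatically Hadamard-sparse with ratio $>1+(1+L)^{-1}$ once the $\nu_n$ grow fast enough. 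The bound $\|P\cdot\gam-1\|_{A^p(\T)}<\eps$ from \lemref{lemQ1.2} controls the error. So the sparse frequencies are manufactured by modulating a fixed polynomial by fast dilations, not by stitching together pieces of a Landau system. Your proposal is missing this replacement mechanism, which is the heart of the proof.
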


\thmref{thm:M1.1} is obtained as a 
consequence of this result, by taking $g = \ft{w}$.

The proof  of \thmref{thm:M2.1} given below
combines the approach in \cite{NO09}
together with techniques from \cite{Lev25} and \cite{LT26}.

\subsection{}

\begin{lem}
\label{lemF2.1}
Given any $0<h<1/6$, one can find  a nonnegative
function $\varphi \in A(\T)$ with the following properties:
\begin{enumerate-num}
\item \label{itp:i} $\ft{\varphi}(0)=1$, $\ft{\varphi}(n) \ge 0$ for all $n \in \Z$;
\item \label{itp:ii} the set 
of zeros of  $\varphi$  in the interval $[0,1]$ 
is precisely $[1/2-h, 1/2+h]$;
\item \label{itp:iii} $\| \varphi - 1\|_{A^p(\T)} \le 6 \cdot h^{(p-1)/p}$
for every $p \ge 1$.
\end{enumerate-num}
\end{lem}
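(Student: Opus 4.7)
The natural first candidate is
\[
\varphi_0(t) := 1 - \tau_h(t-1/2),
\]
which already satisfies (ii) (since $\tau_h \equiv 1$ on $[-h,h]$ and $\tau_h < 1$ outside), is nonneg (since $0 \le \tau_h \le 1$), and by \eqref{eq:tauhap} satisfies
\[
\|\varphi_0 - 1\|_{A^p(\T)} = \|\tau_h(\cdot-1/2)\|_{A^p(\T)} = \|\tau_h\|_{A^p(\T)} \leq 3\,h^{(p-1)/p}.
\]
The obstruction is (i): the coefficients
\[
\ft{\varphi_0}(n) = \delta_{n,0} - (-1)^n \ft{\tau_h}(n) = \delta_{n,0} - (-1)^n \ft{\Delta_h}(n)\bigl(1 + 2\cos(2\pi n h)\bigr)
\]
have indefinite sign, because the factor $1 + 2\cos(2\pi n h)$ oscillates with the fractional part of $nh$.

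The plan is to add a nonneg correction $\psi$ supported in $\T \setminus [1/2-h,1/2+h]$, such that
\[
\ft\psi(n) \geq (-1)^n \ft{\tau_h}(n) \qquad (n \neq 0),
\]
ensuring $\ft{\varphi_0+\psi}(n) \ge 0$ for every $n$. Then $\varphi := \varphi_0 + \psi$ (with a mild renormalization to enforce $\ft\varphi(0) = 1$) satisfies (i) and (ii) by construction, and (iii) with the claimed constant $6$ follows by the triangle inequality, provided $\|\psi\|_{A^p(\T)} \leq 3\,h^{(p-1)/p}$.

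A natural concrete choice for $\psi$ is of autocorrelation form
\[
\psi(t) \;=\; c\cdot (f * \tilde f)(t), \qquad \tilde f(s) := \overline{f(-s)},
\]
where $f \ge 0$ is supported in $[-(1/4 - h/2),\, 1/4 - h/2]$. This choice automatically gives $\psi \ge 0$, nonneg Fourier coefficients $\ft\psi(n) = c\,|\ft f(n)|^2$, and $\supp(\psi) \subset [-(1/2-h), 1/2-h]$, so that $\psi$ vanishes on $[1/2-h, 1/2+h]\pmod 1$ as required. The remaining task is to pick $f$ (and $c$) so that the $|\ft f(n)|^2$ dominate the positive values of $(-1)^n\ft{\tau_h}(n)$ while $\|\psi\|_{A^p(\T)}$ stays under $3\,h^{(p-1)/p}$; explicit candidates include scaled triangles or trapezoids, whose $A^p$-norms are controlled via \eqref{estim_delta} and \eqref{eq:tauhap} and Young's convolution inequality.

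The main obstacle is the simultaneous control: matching the sign-pattern constraint uniformly in $n$ (which depends on the oscillation scale of $\cos(2\pi n h)$) while preserving the sharp $A^p$ bound. This forces $f$ to be chosen with a decay of $\ft f$ precisely calibrated to $\ft{\tau_h}$, and is the step I expect to require the most care. An alternative, which may be cleaner in practice, is to construct $\varphi$ from scratch as a positive-definite probability density on $\T$ vanishing on $[1/2-h,1/2+h]$ (for instance by combining shifted Fej\'er- or Poisson-type atoms), and verify (iii) directly by a Fourier computation on the resulting series.
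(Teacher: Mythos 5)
Your decomposition $\varphi = \bigl(1 - \tau_h(\cdot-1/2)\bigr) + \psi$ is exactly the one the paper uses, and everything you say up to the choice of $\psi$ is correct. The gap is that you never actually pin down $\psi$: you set up an autocorrelation ansatz and then flag the domination $\ft{\psi}(n) \ge (-1)^n\ft{\tau}_h(n)$ as the step requiring ``precise calibration,'' and stop there. This overestimates the difficulty. The estimate you think is delicate is immediate from the relation the paper already records right after \eqref{eq:tauhap}, namely $\tau_h(t) = \Delta_h(t+h) + \Delta_h(t) + \Delta_h(t-h)$: taking Fourier coefficients and using $\ft{\Delta}_h(n)\ge 0$ gives
$|\ft{\tau}_h(n)| = |1+2\cos(2\pi nh)|\,\ft{\Delta}_h(n) \le 3\,\ft{\Delta}_h(n)$ for every $n \in \Z$.

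So the simplest candidate, $\psi := 3\Delta_h$, already works (and it is of the autocorrelation form you propose, since $\Delta_h = \tfrac1h \mathbf{1}_{[-h/2,h/2]}\ast\mathbf{1}_{[-h/2,h/2]}$). With $\varphi := 1 + 3\Delta_h(t) - \tau_h(t-1/2)$ one has $\ft{\varphi}(n) = \delta_{n,0} - (-1)^n\ft{\tau}_h(n) + 3\ft{\Delta}_h(n) \ge \delta_{n,0} \ge 0$ by the bound above, and $\ft{\varphi}(0) = 1 - 3h + 3h = 1$, so the ``mild renormalization'' you anticipate is unnecessary. Nonnegativity of $\varphi$ and the zero set $[1/2-h,1/2+h]$ follow exactly as you argue, since $3\Delta_h \ge 0$ is supported away from $[1/2-h,1/2+h]$ when $h<1/6$. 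Finally $\|\varphi - 1\|_{A^p(\T)} \le 3\|\Delta_h\|_{A^p(\T)} + \|\tau_h\|_{A^p(\T)} \le 6\,h^{(p-1)/p}$ by \eqref{estim_delta} and \eqref{eq:tauhap}. In short, your plan is sound and matches the paper's; the only missing ingredient is the elementary domination $|\ft{\tau}_h(n)| \le 3\ft{\Delta}_h(n)$, which your proposal treats as the main obstacle rather than as the one-line computation it is.
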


\begin{proof}
It  can be verified in a similar way to \cite[Lemma 5.3]{LT26}
that the function
\begin{equation}
\varphi(t) := 1 + 3 \cdot \Del_h(t) - \tau_h(t - 1/2)
\end{equation}
satisfies the required properties.
\end{proof}

\begin{lem}
\label{lemQ1.2}
Given $p>1$ and $\eps>0$ there exist two 
trigonometric polynomials
$P$  and  $\gam$ with integer spectrum, such that 
\begin{enumerate-num}
\item \label{tlo:x} $\gam(t) > 0$ for all $t \in \T$
(in particular, $\gam$ has no zeros);
\item \label{tlo:i} $\ft{\gam}(0)=1$, $\ft{\gam}(n) \ge 0$ for all $n \in \Z$;
\item \label{tlo:ii} $\| \gam  - 1 \|_{A^p(\T)} < \eps$;
\item \label{tlo:iii} $\ft{P}(-n) = 0$ for $n=0,1,2,\dots$;
\item \label{tlo:iv} $\|P \cdot \gam  - 1 \|_{A^p(\T)} < \eps$.
\end{enumerate-num}

\end{lem}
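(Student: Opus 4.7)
The plan is to construct $\gamma$ satisfying (i)--(iii) first, then separately build $P$ satisfying (iv)--(v) for that specific $\gamma$.

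To construct $\gamma$, I would apply \lemref{lemF2.1} with a small parameter $h$ to obtain $\varphi\in A(\T)$ with nonnegative Fourier coefficients, $\ft\varphi(0)=1$, zero set $[1/2-h,1/2+h]$, and $\|\varphi-1\|_{A^p(\T)}\le 6h^{(p-1)/p}$. Since $\varphi$ vanishes on an interval, I would replace it by $\tilde\varphi:=(\varphi+\del)/(1+\del)$ for a small $\del>0$: this is bounded below by $\del/(1+\del)>0$, preserves $\ft{\tilde\varphi}(0)=1$ and the nonnegativity of the Fourier coefficients, and satisfies $\|\tilde\varphi-1\|_{A^p}\le\|\varphi-1\|_{A^p}$. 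To pass to a trigonometric polynomial I would take a Fej\'er mean $\gamma:=\sig_K\tilde\varphi$ for $K$ sufficiently large: Fej\'er kernels have nonnegative Fourier coefficients with zeroth coefficient $1$, so $\gamma$ retains the normalization and nonnegativity of the coefficients; by uniform convergence $\sig_K\tilde\varphi\to\tilde\varphi$ the polynomial $\gamma$ is uniformly positive for $K$ large, and $\|\gamma-1\|_{A^p}<\eps$. Crucially, $\gamma$ inherits from $\varphi$ a ``valley'' near $t=1/2$ where it is approximately $\del/(1+\del)$, very small.

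Now comes the heart of the proof, the construction of $P$. By Hahn--Banach duality applied to the closed subspace $V\sbt A^p(\T)$ that is the closure of $\{P\gamma:P$ trigonometric polynomial with $\spec(P)\sbt\{1,2,\dots\}\}$, the existence of $P$ as required is equivalent to $\dist_{A^p(\T)}(1,V)<\eps$. The annihilator $V^\perp\sbt\ell^q(\Z)$ consists of sequences $c$ satisfying the linear recurrence $\sum_k\ft\gamma(k)c_{m-k}=0$ for all $m\ge 1$, whose characteristic Laurent polynomial is $\gamma(z)$ with $z=e^{2\pi it}$. The naive attempt of taking $P:=P_+(1/\gamma)$ (Riesz projection onto positive frequencies) does not suffice: for a generic $\gamma$ close to $1$ in $A^p$ --- e.g.\ $\gamma=1+2\mu\cos(2\pi t)$ --- the recurrence has decaying mode $z_+^m$ with $|z_+|\approx\mu$, producing an annihilator with $c_0=1$ and $\|c\|_{\ell^q}\approx 1$, so $\dist_{A^p}(1,V)\approx 1$. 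The feature that bypasses this obstruction is precisely the valley of $\gamma$: it forces the roots of $\gamma(z)$ (as Laurent polynomial) to cluster near $z=-1$ on the unit circle, at distance $\sim\del$. The decaying modes $z_0^m$ then have $|z_0|\approx 1-\del$, so any $c\in V^\perp$ with $c_0=1$ has $\|c\|_{\ell^q}\gtrsim(q\del)^{-1/q}$, giving $\dist_{A^p}(1,V)\lesssim(q\del)^{1/q}$, which is $<\eps$ for $\del$ sufficiently small (depending on $p$ and $\eps$). A genuine polynomial $P$ realising this distance is then extracted from the definition of $V$ as a closure.

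The main expected obstacle is making the duality argument rigorous and quantitative. Concretely one needs to (a) locate the roots of $\gamma(z)$ near $z=-1$ as a function of $h,\del,K$, and bound their distance to $|z|=1$ in terms of the depth of the valley; (b) analyze the full set of decaying modes of the recurrence --- there are several of them, corresponding to all roots of $\gamma$ inside the unit disk --- to obtain a matching upper bound $\dist_{A^p}(1,V)\lesssim(q\del)^{1/q}$ uniform over all admissible annihilators; and (c) choose the parameters in the correct order, first $h$ small enough to give (iii), then $\del$ super-small depending on $h$ and the target exponent $p$, and finally $K$ large enough that the Fej\'er regularization does not destroy either estimate. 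Executing step (b) carefully --- tracking the contribution of each decaying mode and showing that the ratio $|c_0|/\|c\|_{\ell^q}$ is controlled by the proximity of the roots of $\gamma$ to the unit circle --- is the most delicate part of the argument.
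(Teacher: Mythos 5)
The approach you propose is genuinely different from the paper's, and it contains a gap that you yourself flag but do not close. You fix the order: build the trigonometric polynomial $\gamma$ first (so that it is strictly positive, hence never vanishes), and only then try to construct $P$ for that $\gamma$ by a quantitative duality argument involving the roots of $\gamma$. The paper reverses this order, and that reversal is the key idea: it first constructs $P$ for the \emph{function} $\varphi$ of Lemma \ref{lemF2.1}, exploiting crucially that $\varphi$ \emph{vanishes on the open interval} $(1/2-h,1/2+h)$. With that vanishing in hand, the duality step is soft and quantitative-free: if $\alpha\in A^q(\T)$ annihilates $\{\varphi(t)e^{2\pi i nt}\}_{n\ge 1}$, then $\alpha\cdot\varphi$ has one-sided spectrum and vanishes on an open arc, hence $\alpha\cdot\varphi=0$ by the classical uniqueness theorem for analytic measures/distributions; a Wiener-algebra inverse on the complementary arc then gives $\alpha(\chi)=0$ for $\chi=1-\tau_{2h}(\cdot-1/2)$, and Hahn--Banach produces $P$ with $\|P\varphi-\chi\|_{A^p}<\eps/2$. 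Only \emph{after} $P$ is fixed does the paper replace $\varphi$ by a Fej\'er mean $\gamma=\varphi*K_N$ (automatically strictly positive, since $\varphi\ge 0$, $\varphi\not\equiv 0$, and $K_N>0$), and then $\|P\gamma-P\varphi\|_{A^p}\le\|\ft P\|_1\|\gamma-\varphi\|_{A^p}\to 0$.

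By building $\gamma$ first you lose the vanishing and are forced into the root analysis you describe in your final paragraph: locate the zeros of $\gamma(z)$ near the unit circle, bound $|c_0|/\|c\|_{\ell^q}$ over all annihilators, and then extract $P$. You correctly identify this as the heart of the matter, but the proposal stops short of carrying it out, so the proof is incomplete precisely at its most delicate point. Moreover, the heuristic estimate in that paragraph appears to be off: for a nonnegative trigonometric polynomial with minimum value $\sim\del$ attained on an arc, the Fej\'er--Riesz roots typically sit at distance $\sim\sqrt\del$ (not $\sim\del$) from $|z|=1$ (already for $\gamma=\del+(1-\del)\sin^2(\pi t)$ the roots are at $\approx 1\pm 2\sqrt\del$), so the claimed bound $\dist_{A^p}(1,V)\lesssim(q\del)^{1/q}$ would need to be rederived with the correct exponent. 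Finally, a smaller remark: the regularization $\tilde\varphi=(\varphi+\del)/(1+\del)$ is not needed even within your plan, since the Fej\'er mean of the nonnegative, not-identically-zero $\varphi$ is already strictly positive. The essential missing idea is to prove existence of $P$ \emph{before} passing to a polynomial $\gamma$, so that the annihilator argument can use the vanishing of $\varphi$ on an interval instead of a quantitative root-location estimate.
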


\begin{proof}
We choose and fix a small $h = h(p, \eps) > 0$. 
Let $\varphi$ be the function given by
\lemref{lemF2.1}, and let 
 $\chi(t) := 1 - \tau_{2h}(t - 1/2)$. We   first
claim that one can find a trigonometric polynomial
$P$ with integer spectrum 
satisfying the condition \ref{tlo:iii} and such that    
$\|P \cdot \varphi - \chi\|_{A^p(\T)} < \eps/2$.

Indeed, let
 $\alpha$ be a Schwartz distribution
belonging to the dual space $(A^p(\T))^* = A^q(\T)$, $q = p/(p-1)$,
and suppose that $\alpha$ annihilates the system
\begin{equation}
\label{eq:Q2.2}
\{\varphi(t) e^{2 \pi i n t}\}, \quad n=1,2,3,\dots
\end{equation}
This means that the distribution $\alpha \cdot \varphi$
is analytic, namely, it
satisfies $(\alpha \cdot \varphi)^{\wedge}(-n) = 0$ for
all $n > 0$. But
$\alpha \cdot \varphi$ vanishes on the open interval
 $(1/2-h, 1/2 + h)$, so this is not possible unless 
$\alpha \cdot \varphi = 0$ (see e.g.\ \cite[Section 6.4, p.\ 200]{Hel10}).
In turn, the function $\varphi$ is in $A(\T)$ and has no zeros
in the closed interval $[-1/2 +2h, 1/2 - 2h]$, so
by Wiener's theorem (see e.g.\ \cite[Section 6.2]{Hel10})
there is $\psi \in A(\T)$ such that
$\varphi(t) \psi(t) = 1$ on  $[-1/2 +2h, 1/2 - 2h]$.
Since the function $\chi$ is supported 
on  $[-1/2 +2h, 1/2 - 2h]$, it follows that
$\chi = \varphi \cdot \psi \cdot \chi $. Hence
\begin{equation}
\alpha(\chi) = 
\alpha(\varphi \cdot \psi \cdot \chi) =
(\alpha \cdot \varphi)( \psi \cdot \chi) = 0.
\end{equation}
By duality, this implies that $\chi$ must lie in the closed
linear subspace of $A^p(\T)$ 
spanned by the system \eqref{eq:Q2.2}, 
so there is a trigonometric polynomial
$P$ with integer spectrum satisfying \ref{tlo:iii} 
and such that
$\|P \cdot \varphi - \chi\|_{A^p(\T)} < \eps/2$.

If $h = h(p, \eps) > 0$ is small enough, then both
 $\| \varphi - 1\|_{A^p(\T)}$ and
 $\| \chi - 1\|_{A^p(\T)}$ do not exceed $\eps/2$.
As a consequence,
$\|P \cdot \varphi - 1\|_{A^p(\T)} < \eps$.
We can thus conclude the proof by choosing
$\gam$ to be a Fej\'{e}r sum of $\varphi$
of sufficiently high order,
that is, $\gam = \varphi \ast K_N$ where $K_N$
is the classical $N$'th order Fej\'{e}r kernel
 and $N$ is sufficiently large.
\end{proof}

\subsection{}

\begin{lem}
\label{lemQ1.5}
Let $u \in \S(\R)$, let
$H$ be a trigonometric polynomial (with real spectrum), and let
$p>1$ and $\eta > 0$ be given.
 Then there is $\del > 0$ such that for any
 $d>0$ one can find trigonometric
polynomials  $\Gam$ and $Q$ satisfying
the following properties:
\begin{enumerate-num}
\item \label{qlo:i} $\spec(\Gam) \sbt \Z$; 
\item \label{qlo:x} $\Gam(t) > 0$ for all $t \in \T$
(in particular, $\Gam$ has no zeros);
\item \label{qlo:ii}
$\ft{\Gam}(0)=1$, $\ft{\Gam}(n) \ge 0$ for all $n \in \Z$;
\item \label{qlo:iii} $\| \Gam  - 1 \|_{A^p(\T)} < \eta$;
\item \label{qlo:iv} $\spec(Q)$ is contained in some set
$\{\lam_1, \lam_2, \dots, \lam_N\} \sbt \R$;
\item \label{qlo:v} $\lam_1 > d$, and $\lam_{j+1} / \lam_j > 1 + \del$
for $j =1,2,\dots, N-1$;
\item \label{qlo:vi} $\|u \cdot (\Gam \cdot Q -  H)\|_{A^q(\R)} < \eta$
for every $q \ge p$.
\end{enumerate-num}
\end{lem}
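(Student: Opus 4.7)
The plan is to reduce the statement to the single-exponential case handled by Lemma~\ref{lemQ1.2}, using well-separated dilations to treat each term of $H$ independently. Write $H(t) = \sum_{j=1}^s b_j e^{2\pi i \sig_j t}$ and put $A := \sum_{j=1}^s |b_j| \cdot \nmb{u \cdot e^{2\pi i \sig_j t}}$, a finite constant depending only on $u$ and $H$. First I would apply Lemma~\ref{lemQ1.2} with exponent $p$ and a small accuracy $\eps > 0$ (to be fixed at the end) to obtain trigonometric polynomials $\gam, P$ on $\T$ with integer spectra, $\spec(P) \sbt \{1, \ldots, r\}$ where $r := \deg P$, with $\gam(t) > 0$, with nonnegative Fourier coefficients and $\ft{\gam}(0) = 1$, and with both $\|\gam - 1\|_{A^p(\T)}$ and $\|P\gam - 1\|_{A^p(\T)}$ less than $\eps$. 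Set $\del := 1/(2r)$; crucially, this depends only on $u, H, p, \eta$ and \emph{not} on $d$.

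Given $d > 0$, I would choose positive integers $M_1 < M_2 < \cdots < M_s$ growing sufficiently rapidly that: (a) $M_1 > d + \max_j |\sig_j|$, so that every element of $\bigcup_j \{\sig_j + n M_j : n \in \spec(P)\}$ exceeds $d$; (b) $M_j \gg \max_k |\sig_k|$, so that within each cluster the consecutive ratios $1 + M_j/(\sig_j + n M_j)$ exceed $1+\del$ (they tend to $(n+1)/n \ge r/(r-1) > 1+\del$ as $M_j \to \infty$); (c) $M_{j+1}/M_j$ is large enough that consecutive clusters are separated by a factor greater than $1+\del$; and (d) $M_{j+1} > \deg(\gam)\sum_{k \le j} M_k$, which forces the only tuple $(n_1,\dots,n_s) \in \Z^s$ with $|n_j|\le\deg(\gam)$ and $\sum_j n_j M_j = 0$ to be the trivial one. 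Then define
\[
\Gam(t) := \prod_{j=1}^s \gam(M_j t), \qquad Q(t) := \sum_{j=1}^s b_j e^{2\pi i \sig_j t}\, P(M_j t).
\]

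Properties (i)--(iv) for $\Gam$ are then immediate: the spectrum lies in $\sum_j M_j \Z \sbt \Z$; positivity of $\Gam$ and nonnegativity of its Fourier coefficients are preserved by products; condition (d) forces $\ft{\Gam}(0) = 1$ exactly; and for (iv), the telescoping identity $\Gam - 1 = \sum_k (\gam(M_k t) - 1)\prod_{j<k}\gam(M_j t)$, combined with $\|fg\|_{A^p(\T)} \le \|f\|_{A^p(\T)}\|g\|_{A^1(\T)}$ and the uniform bound $\|\gam(M_j t)\|_{A^1(\T)} = \|\gam\|_{A^1(\T)} = \gam(0) \le 4$ (independent of $\eps$, since $\gam$ arises as a Fej\'er sum of the function $\varphi$ from Lemma~\ref{lemF2.1} with $\varphi(0) = 4$), yields $\|\Gam - 1\|_{A^p(\T)} = O(\eps)$. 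Properties (v)--(vi) follow by enumerating the elements of $\bigcup_j\{\sig_j + nM_j : n \in \spec(P)\}$ in increasing order as $\lam_1 < \cdots < \lam_N$ and invoking (a)--(c).

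For (vii), the key identity is
\[
\Gam(t) Q(t) - H(t) = \sum_{j=1}^s b_j e^{2\pi i \sig_j t} \Bigl[ (\gam P)(M_j t) \prod_{k \neq j}\gam(M_k t) - 1\Bigr].
\]
Each bracket is a $1$-periodic trigonometric polynomial of $A^p(\T)$-norm $O(\eps)$: writing $AB - 1 = (A-1)B + (B-1)$ with $A = (\gam P)(M_j t)$ and $B = \prod_{k\neq j}\gam(M_k t)$, dilation invariance of $\|\cdot\|_{A^p(\T)}$ gives $\|A-1\|_{A^p(\T)} = \|P\gam - 1\|_{A^p(\T)} < \eps$, while the same telescoping argument yields $\|B - 1\|_{A^p(\T)} = O(\eps)$. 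Passing to $A^q(\T)$ for $q \ge p$ via \eqref{eq:P7.34} and then to $A^q(\R)$ via Lemma~\ref{lem:P3.41} applied to the Schwartz factor $u \cdot e^{2\pi i \sig_j t}$, I obtain $\|u \cdot (\Gam Q - H)\|_{A^q(\R)} \le A \cdot O(\eps)$; choosing $\eps$ small enough at the start makes this less than $\eta$ for all $q \ge p$. The main structural subtlety is that $P$ (and hence $r$ and $\del$) must be fixed \emph{before} $d$ is given, so that $\del$ is genuinely independent of $d$; the apparent difficulty that a single $\Gam$ must compensate for every $P(M_j t)$ simultaneously is resolved precisely by the telescoping structure above.
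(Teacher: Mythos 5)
Your proposal is correct and follows essentially the same approach as the paper: the same definitions $\Gamma(t)=\prod_j\gamma(M_jt)$ and $Q(t)=\sum_j b_j e^{2\pi i\sigma_j t}P(M_jt)$ built from Lemma~\ref{lemQ1.2}, the same key decomposition of $\Gamma Q - H$ into $1$-periodic brackets, the same cluster structure for the lacunarity, and a $\delta$ depending only on $\deg P$. The only (inessential) differences are technical: you bound $\|\Gamma-1\|_{A^p(\T)}$ and the bracket norms via telescoping and the $\|\gamma\|_{A^1(\T)}\le 4$ bound, whereas the paper uses the exact separated-spectrum identity $\|\Gamma\|_{A^p(\T)}^p=\|\gamma\|_{A^p(\T)}^{pK}$ and bounds in $\|\cdot\|_{A^p(\T)}$.
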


\begin{proof}
Let us denote
$H(t) = \sum_{n=1}^{K} c_{n} e^{2 \pi i \sig_n t}$,
where $\{\sigma_n\}$ are distinct real numbers, 
and $\{c_n\}$ are complex numbers. 
We choose a small   $\eps = \eps(u,H,p,\eta) > 0$,
and apply \lemref{lemQ1.2} to obtain trigonometric polynomials
$P$ and $\gam$. Next, we choose large
positive integers $\nu_1 < \nu_2 < \dots < \nu_K$ and set
\begin{equation}
\label{eq:Q1.20}
\Gam(t) = \prod_{n=1}^{K} \gam(\nu_n t), \quad
Q(t) = \sum_{n=1}^{K} c_{n} e^{2 \pi i \sig_n t} P(\nu_n t).
\end{equation}
It is obvious that  the properties
 \ref{qlo:i} and \ref{qlo:x} are satisfied.

If  $\nu_1, \dots, \nu_K$ are chosen
sufficiently fast increasing, then also \ref{qlo:ii} holds. Moreover,
\begin{equation}
\label{eq:Q2.1}
\|\Gam - 1\|_{A^{p}(\T)}^{p} =
\|\Gam \|_{A^{p}(\T)}^{p} - 1 =
  \|\gam \|_{A^{p}(\T)}^{p K} - 1
  < (1 + \eps)^{pK} - 1 < \eta^p
\end{equation} 
provided that $\eps$ is chosen small enough, so we obtain
\ref{qlo:iii}.

(The condition $\ft{\Gam}(0)=1$ in \ref{qlo:ii},
as well as the second equality in \eqref{eq:Q2.1},
 can be verified by expanding the polynomial $\gamma$ in its (finite) Fourier series 
and opening the brackets; see \cite[Lemma 3.2]{LT26} which is similar.)

Let $M = M(u,H) =  1 + \sum_{n=1}^{K} \nmb{c_n u(t) e^{2 \pi i \sig_n t}}$.
We   claim that
\begin{equation}
\label{eq:Q2.9}
 \| \Gam(t) P(\nu_n t) - 1 \|_{A^p(\T)} < M^{-1} \eta, \quad
 n = 1,2,\dots,K,
\end{equation}
for $\eps$  small enough. Indeed,  we have
\begin{equation}
\label{eq:Q2.7}
\Gam(t) P(\nu_n t) - 1  = \Gam_n(t) ( \gam(\nu_n t) P(\nu_n t) - 1)
+ (\Gam_n(t) - 1),
\end{equation}
where 
$\Gam_n(t) := \prod_{j \neq n}  \gam(\nu_j t)$.
The norm of the last summand in \eqref{eq:Q2.7} 
 can be estimated similarly to
\eqref{eq:Q2.1}, so we can assume that
 $\|\Gam_n - 1 \|_{A^p(\T)} < (2M)^{-1} \eta$
 by choosing  $\eps$ small enough. If  $\nu_1, \dots, \nu_K$
 are chosen sufficiently fast increasing, then the norm of 
the first summand on the right hand side of
\eqref{eq:Q2.7}  is equal to
\begin{equation}
\label{eq:Q2.11}
\| \Gam_n \|_{A^p(\T)} \cdot  \|  \gam \cdot P - 1 \|_{A^p(\T)}
\le 2  \eps <   (2M)^{-1} \eta
\end{equation}
 for $\eps$ small enough, and thus \eqref{eq:Q2.9} follows.

Next we  verify property \ref{qlo:vi}. Indeed, we have
\begin{equation}
\label{eq:Q2.13}
u(t)   (\Gam(t)  Q(t) -  H(t)) 
=  \sum_{n=1}^{K} c_{n} u(t)  e^{2 \pi i \sig_n t} (\Gam(t) P(\nu_n t) - 1),
\end{equation}
hence using \eqref{eq:P7.34}, \eqref{eq:P3.41},
  \eqref{eq:Q2.9} we obtain for every $q \ge p$,
\begin{equation}
\label{eq:Q2.14}
\|u \cdot (\Gam \cdot Q -  H)\|_{A^q(\R)}
\le  \sum_{n=1}^{K} \nmb{c_{n} u(t)  e^{2 \pi i \sig_n t}} \cdot
\| \Gam(t) P(\nu_n t) - 1 \|_{A^p(\T)} < \eta,
\end{equation}
so condition \ref{qlo:vi} is satisfied.

Lastly  we turn to establish the properties \ref{qlo:iv} and \ref{qlo:v}. 
We denote $L = \deg(P)$, and   take
 $\del = (1 + L)^{-1}$. Now suppose that $d>0$ is given.
 If  $\nu_1, \dots, \nu_K$  are chosen 
 sufficiently fast increasing,   then we have
\begin{equation}
\label{eq:Q2.16}
\spec(Q) \subset \bigcup_{n=1}^{K} J_n, \quad
J_n = \sig_n + \{\nu_n, 2\nu_n, 3\nu_n, \dots, L\nu_n\},
\end{equation}
and $J_{n+1}$ follows
 $J_n$   for each $n=1,2,\dots, K-1$.  Let us write
 \begin{equation}
\label{eq:Q2.19}
\bigcup_{n=1}^{K} J_n =\{ \lam_1, \lam_2, \dots, \lam_N\},
\end{equation}
 where $\lam_j$ are ordered increasingly.
 Then  $\lam_1 = \sig_1 + \nu_1 > d$,  provided that
  $\nu_1$ is large enough
   (we note that  $\nu_1, \dots, \nu_K$   are allowed to depend on $d$,
     while $\delta$ does not depend on $\nu_1, \dots, \nu_K$).
 Next, suppose that $\lam_j$ and $\lam_{j+1}$ are
 two consecutive elements of the set
\eqref{eq:Q2.19}. If
$\lam_j$ and $\lam_{j+1}$ lie in the same block $J_n$, 
then according to \eqref{eq:Q2.16} we have 
 \begin{equation}
\label{eq:Q2.20}
\frac{\lam_{j+1}}{\lam_j} = \frac{\sig_n + (l+1)\nu_n}{\sig_n + l\nu_n}
\end{equation}
for some $l \in \{1,2,\dots,L-1\}$. Otherwise, 
$\lam_j$ is the last element of $J_n$,
while $\lam_{j+1}$ 
is the first element of $J_{n+1}$, for some
$n \in \{1, 2, \dots, K-1\}$. In this case,
 \begin{equation}
\label{eq:Q2.23}
\frac{\lam_{j+1}}{\lam_j} = \frac{\sig_{n+1} + \nu_{n+1}}{\sig_n + L\nu_n}.
\end{equation}
In either case, \eqref{eq:Q2.20} or \eqref{eq:Q2.23}, 
we can ensure by taking
  $\nu_1, \dots, \nu_K$  large enough that
  the condition
  $\lam_{j+1}/\lam_j > 1 + \del$    holds
for $j =1,2,\dots, N-1$. Thus
    properties \ref{qlo:iv} and \ref{qlo:v} are satisfied
  and the lemma is proved.
\end{proof}

\subsection{Proof of \thmref{thm:M2.1}}
The proof consists of several steps.

\subsubsection{}
Let $0 < \eps_n \to 0$ be given, and we may also
 assume that $\eps_{n+1} < \eps_n$ for each $n$.
We start by choosing a sequence 
 $\{\chi_k\}_{k=1}^{\infty} \sbt I_0(\R)$
which  is  dense in the space $I_0(\R)$.

By \propref{prop:L5.1}
there exists  a Landau system 
$\{\sig_n\}_{n=1}^{\infty}$ 
satisfying
\begin{equation}
\label{eq:Q3.1}
\sig_n = n + o(1), \quad n \to + \infty.
\end{equation}
By an application of \thmref{thm:L2.7} we 
obtain a nonnegative function  $u_0 \in I_0(\R)$
with $\ft{u}_0$  nonnegative,
  such that for every $N$ the system
$\{u_0(t) e^{2 \pi i \sig_n t}\}$, $n > N$, 
is complete in $I_0(\R)$.
We will construct by induction a sequence of 
functions $\{u_k\}_{k=1}^{\infty} \sbt I_0(\R)$
such that for each $k$ and every $N$, the system
\begin{equation}
\label{eq:Q3.2}
\{u_k(t) e^{2 \pi i \sig_n t}\}, \; n > N,
\end{equation}
is complete in the space $I_0(\R)$.

The sequence $\{\lam_n\}_{n=1}^{\infty}$ 
satisfying  \eqref{eq:H1.5}
will  be constructed by the same induction.

At the $k$'th step of the induction,
  suppose that the elements
$\{\lam_n\}$, $1 \le n \le N_{k}$, 
satisfying  \eqref{eq:H1.5}
have already been defined.
Given any positive integer  $l_k$
we  use the completeness of the system
$\{u_{k-1}(t) e^{2 \pi i \sig_n t}\}$, $n > l_k$,
in $I_0(\R)$  to find a polynomial
\begin{equation}
\label{eq:Q3.3}
H_k(t) = \sum_{l_k < n < l'_k} c_{n,k} e^{2 \pi i \sig_n t} 
\end{equation}
such that
\begin{equation}
\label{eq:Q3.4}
d( u_{k-1} \cdot  H_k, \chi_k) < k^{-1}.
\end{equation}

We choose a small number  $\eta_k > 0$ so that
\begin{equation}
\label{eq:Q4.1}
\eta_k \cdot \Big(1+  \nmb{u_{k-1}}  + \sum_{j=1}^{k-1} \nmb{u_{k-1} \cdot Q_j}
\Big) < 2^{-k} k^{-1},
\end{equation}
and let $p_k := 1 + k^{-1}$. Now invoke
 \lemref{lemQ1.5} with $u_{k-1}$,
$H_k$, $p_k$ and $\eta_k$
 to obtain a number $\delta_k > 0$.
We choose $M_k$  large enough so that we have
$\eps_{M_k} < \del_k$, and add arbitrary elements
$\{\lam_n\}$, $N_k < n \le M_k$, 
while keeping the condition \eqref{eq:H1.5}.
Now set $d_k := (1 + \eps_{M_k}) \lam_{M_k}$ and 
use \lemref{lemQ1.5} to obtain trigonometric
polynomials  $\Gam_k$ and $Q_k$.
The spectrum of $Q_k$ is contained in a sequence
$\{\lam_n\}$, $M_k  < n \le N_{k+1}$,
which still satisfies \eqref{eq:H1.5}
thanks to \lemref{lemQ1.5}\ref{qlo:iv},\ref{qlo:v}.

Finally, define  $u_k := u_{k-1} \cdot \Gam_k$
which is a nonnegative function  in $I_0(\R)$.
Recall that by the previous inductive step, for every $N$
the system
 	$\{u_{k-1}(t) e^{2 \pi i \sig_n t}\}$, $n >N$,
is complete in $I_0(\R)$. 
 Since  multiplication by a 
 trigonometric polynomial $\Gam_k$ with no zeros
 maps $I_0(\R)$ continuously and densely into $ I_0(\R)$,
 it follows that also the system
 \eqref{eq:Q3.2} is complete  in the space $I_0(\R)$.
This concludes the inductive construction.

\subsubsection{}
Since we have $1<p_k \to 1$, then
 for each $p>1$ there exists a sufficiently large
$k(p)$ such that
$1 < p_k < p$ for all  $k \ge k(p)$. 
For such $k$,  using \eqref{eq:P7.34}, \eqref{eq:P3.41} we have
\begin{equation}
 \| u_k - u_{k-1} \|_{A^p(\R)} = \| u_{k-1} \cdot (\Gam_k - 1) \|_{A^p(\R)} 
\le \nmb{u_{k-1}} \| \Gam_k - 1  \|_{A^{p_k}(\T)}.
\end{equation}
Using 
\lemref{lemQ1.5}\ref{qlo:iii} and
\eqref{eq:Q4.1} we conclude that
\begin{equation}
 \| u_k - u_{k-1} \|_{A^p(\R)} < 2^{-k}, \quad k \ge k(p),
\end{equation}
which implies that the sequence $u_k$ converges 
in the space $A^p(\R)$ for every $p>1$.

 The limit of the
sequence $u_k$ is thus a nonnegative function
$w \in \cap_{p>1} A^p(\R)$. We recall that
$u_0$ has a nonnegative Fourier transform $\ft{u}_0$,
while each $\Gam_k$ is a trigonometric polynomial
with nonnegative Fourier coefficients.
This implies that $\ft{u}_k$
is nonnegative for each $k$, and as a consequence,
$\ft{w}$ is a nonnegative function in
$\cap_{p>1} L^p(\R)$.

\subsubsection{}
Next, we fix $p>1$ and 
claim that for every $N$ the system
\begin{equation}
\label{eq:Q3.21}
\{w(t) e^{2 \pi i \lam_n t} \}, \quad n > N,
\end{equation}
is complete in the space $A^p(\R)$.
To prove this, it will be enough to show that
\begin{equation}
\label{eq:Q3.27}
\|w \cdot Q_k  - \chi_k \|_{A^{p}(\R)} \to 0, \quad k \to + \infty.
\end{equation}
Indeed, the sequence   $\{\chi_k\}_{k=1}^{\infty}$
 is  dense in the space $I_0(\R)$,
while  $I_0(\R)$ is continuously and densely embedded in $A^p(\R)$,
see \cite[Section 5]{Lev25}. 
Hence $\{\chi_k\}_{k=1}^{\infty}$
 is  dense also in the space $A^p(\R)$. 
 Recalling that $\spec(Q_k)$
is contained in the sequence
$\{\lam_n\}$, $n > N_k$,
the condition \eqref{eq:Q3.27}
 implies the completeness 
 of the system \eqref{eq:Q3.21}
 in  $A^p(\R)$.

In order to establish \eqref{eq:Q3.27} we  write
\begin{equation}
\label{eq:Q3.28}
w \cdot Q_k  - \chi_k  = 
(u_{k-1} \cdot H_k  - \chi_k) + u_{k-1} \cdot (\Gam_k \cdot Q_k - H_k)  
+ (w - u_k) \cdot Q_k,
\end{equation}
and   estimate the $A^p(\R)$ norm  of each term 
on the right hand side of \eqref{eq:Q3.28}.

To estimate the norm of the first term, we recall that
the space   $I_0(\R)$ is continuously embedded in $A^p(\R)$,
so \eqref{eq:Q3.4} implies that
\begin{equation}
\label{eq:Q3.29}
\| u_{k-1} \cdot H_k  - \chi_k \|_{A^p(\R)} \to 0, \quad k \to + \infty.
\end{equation}
To estimate the norm of the second term we recall that by
\lemref{lemQ1.5}\ref{qlo:vi} we have
\begin{equation}
\label{eq:Q3.24}
 \|u_{k-1} \cdot (\Gam_k \cdot Q_k  - H_k) \|_{A^{p}(\R)} < \eta_k,
 \quad k \ge k(p).
\end{equation}
It remains to estimate the last term in \eqref{eq:Q3.28}.
We note that for any fixed $k$ we have
$u_j  \cdot Q_k \to
w  \cdot Q_k$ as $j \to + \infty$ in the $A^p(\R)$ norm, hence
for $k \ge k(p)$ we have
\begin{align}
& \label{telesc1}\| (w - u_k)  \cdot   Q_k   \|_{A^p(\R)} 
\le \sum_{j=k+1}^{\infty}
\| (u_j - u_{j-1})  \cdot Q_k   \|_{A^p(\R)} \\
& \label{telesc2}\qquad    = \sum_{j=k+1}^{\infty}
\|  u_{j-1}  \cdot (\Gam_j - 1) \cdot Q_k   \|_{A^p(\R)} \\
& \label{telesc3}\qquad    \le \sum_{j=k+1}^{\infty}
\nmb{u_{j-1} \cdot Q_k}  
\cdot \|  \Gam_j - 1  \|_{A^{p_j}(\T)}  \\
&\label{telesc4} \qquad    \le \sum_{j=k+1}^{\infty}  2^{-j}  j^{-1} < k^{-1},
\end{align}
where the inequality \eqref{telesc3} follows from 
\eqref{eq:P7.34}, \eqref{eq:P3.41}, while to
obtain the inequality \eqref{telesc4} we have used
\lemref{lemQ1.5}\ref{qlo:iii} and
\eqref{eq:Q4.1}. Finally, combining 
\eqref{eq:Q3.28}, \eqref{eq:Q3.29}, \eqref{eq:Q3.24} and 
\eqref{telesc1}--\eqref{telesc4}
yields the required estimate \eqref{eq:Q3.27},
and so we establish the completeness of 
the system \eqref{eq:Q3.21} in  $A^p(\R)$ for every $p>1$.

This completes the proof of \thmref{thm:M2.1}, and
as a consequence, \thmref{thm:M1.1} is also established.
\qed

\subsection{Remarks}
\label{sec:R1.5}

1. We can make the
function $g$ in \thmref{thm:M1.1} infinitely smooth.
To this end it would suffice to ensure that 
$\int_{\R} w(t)  (1+|t|)^n dt $ is finite   
for each positive $n$.  Indeed, one can
infer from the proof of 
\lemref{lemQ1.5} that the polynomials $\Gam_k$ can be
chosen such that
$\int_{\T} \Gam_1(t) \cdots \Gam_k(t) dt = 1$.
Hence 
$\int_{\R} u_k(t) (1+|t|)^n  dt$ does not exceed
\begin{equation}
\label{eq:Q6.2}
M_n := \sup_{t \in \R} \sum_{j \in \Z}  u_0(t-j)  (1+|t-j|)^n
\end{equation}
for each $k$, and the desired conclusion follows by passing to 
the limit as $k \to + \infty$.

2. On the other hand, if $\{\eps_n\}$ decrease slowly,
then the function $g$ cannot have fast decay,
in fact, $g$ cannot be chosen  in $L^1(\R)$.
For otherwise, $\ft{g}$ would be
 a continuous function
and   the exponential system
$\{ e^{2 \pi i \lam_n t}\}$
would be complete on
some interval of positive length, which is not
possible if $\{\lam_n\}$  is sparse, see e.g.\ 
\cite[Section 4.7]{OU16}.

3. 
It can be inferred from the proof of  \lemref{lemQ1.5} 
that by choosing $l_k \to + \infty$ and using
\eqref{eq:Q3.1}, \eqref{eq:Q3.3}, we can obtain
a sequence $\{\lam_n\}$ such that
$\dist(\lam_n, \Z) \to 0$.


\section{Completeness of almost integer translates}
\label{sec:Perturb}

\subsection{}
In this section we prove \thmref{thm:Perturb}, i.e.\ 
for any real sequence 
$\{\lambda_n\}_{n=1}^\infty$  satisfying
\begin{equation}
\label{eq:PQ8.1}
\lambda_n = n+\alpha_n, \quad 0\neq \alpha_n \to 0,
\end{equation}
we construct  a nonnegative function 
$g \in \cap_{p>1} L^p(\R)$
such that  the system of translates
$\{g(x - \lam_n)\}_{n=1}^{\infty}$
is complete in $L^p(\R)$ for every $p>1$.

As before, since the Fourier transform is an isometric
isomorphism $A^p(\R) \to L^p(\R)$, 
we will obtain \thmref{thm:Perturb} as a consequence
of the following result:
\begin{thm}
\label{thm:Perturb2}	
Given any real sequence 
 $\{\lambda_n\}_{n=1}^\infty$ satisfying \eqref{eq:PQ8.1}
there is a nonnegative function $w \in \cap_{p>1} A^p(\R)$
with $\ft{w}$ nonnegative,
such that  the system 
$\{w(t) e^{2 \pi i \lam_n t} \}_{n=1}^{\infty}$
is complete in the space $A^p(\R)$ for every $p>1$.
\end{thm}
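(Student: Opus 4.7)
The plan is to imitate the inductive construction of \thmref{thm:M2.1} from Section \ref{sec:Q1}, building $w$ as the limit in each $A^p(\R)$, $p>1$, of a product sequence $u_k = u_0 \cdot \Gam_1 \cdots \Gam_k$. Here $u_0 \in I_0(\R)$ is nonnegative with $\ft{u}_0 \ge 0$ obtained by applying \corref{cor:L2.11} to some Landau system $\{\sig_n\}$ with $\sig_n = n + o(1)$ (\propref{prop:L5.1}), and each $\Gam_k$ is a trigonometric polynomial with integer spectrum, strictly positive on $\T$, having nonnegative Fourier coefficients and close to $1$ in the $A^{p_k}(\T)$ norm for $p_k = 1 + 1/k$. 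Simultaneously we construct polynomials $Q_k$ with $\spec(Q_k) \sbt \{\lam_n\}_{n > N_k}$ such that $w \cdot Q_k$ approximates the elements of a fixed dense sequence $\{\chi_k\}$ in $I_0(\R)$.

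The crucial new ingredient is an analog of \lemref{lemQ1.5} producing a polynomial $Q$ with spectrum in $\{\lam_n\}_{n > N}$ (rather than in a sparse set of our choosing) that still approximates $u \cdot H$. Since both $\sig_n = n + o(1)$ and $\lam_n = n + o(1)$, for large indices $\sig_n$ and $\lam_n$ are close to the same integer. We first apply \lemref{lemQ1.5} to produce $\Gam$ together with a sparse-spectrum polynomial $\widetilde Q(t) = \sum_n c_n e^{2\pi i \sig_n t} P(\nu_n t)$ approximating $u \cdot H$, and then \emph{substitute} each exponential $e^{2\pi i (\sig_n + \ell \nu_n) t}$ appearing in $\widetilde Q$ by $e^{2\pi i \lam_{m(n,\ell)} t}$, where $m(n,\ell)$ is an integer (forced to exceed $N$ by taking $\nu_n$ large) for which $\lam_{m(n,\ell)}$ is closest to $\sig_n + \ell \nu_n$. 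Each substitution error,
\begin{equation*}
u(t) \cdot e^{2\pi i(\sig_n+\ell \nu_n)t}\bigl(1 - e^{2\pi i \eps_{n,\ell} t}\bigr), \qquad \eps_{n,\ell} = \lam_{m(n,\ell)} - \sig_n - \ell \nu_n,
\end{equation*}
is small in $A^p(\R)$ norm since $u$ has compact support in a Landau set and $|\eps_{n,\ell}| \to 0$; the precise bound comes from the $\nmb{\cdot}$-norm machinery of Section \ref{sec:P1} (cf.\ \lemref{lem:P3.41}).

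With this analog of \lemref{lemQ1.5}, the rest of the argument follows Section \ref{sec:Q1} essentially verbatim. We verify convergence $u_k \to w$ in each $A^p(\R)$, $p>1$; the nonnegativity of $w$ and $\ft{w}$ is inherited from $u_0$, $\ft{u}_0$, and the $\Gam_k$; and the telescoping decomposition
\begin{equation*}
w \cdot Q_k - \chi_k = (u_{k-1}\cdot H_k - \chi_k) + u_{k-1}\cdot(\Gam_k Q_k - H_k) + (w - u_k)\cdot Q_k
\end{equation*}
yields $\|w \cdot Q_k - \chi_k\|_{A^p(\R)} \to 0$ for every $p > 1$, establishing the completeness of $\{w(t)e^{2\pi i \lam_n t}\}_{n \ge 1}$ in $A^p(\R)$.

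The main obstacle lies in the quantitative bookkeeping of the substitution. The substitution error per frequency is proportional to $|\eps_{n,\ell}|$ times the coefficient $|c_n \hat{P}(\ell)|$, and summing over all substituted frequencies requires controlling the coefficients $|c_n|$ in $H_k$ coming from the Landau-system completeness, which are a priori unbounded. One must therefore choose the cut-off $l_k$ (the lower bound on $\spec(H_k)$) quantitatively large enough relative to the rate at which $\alpha_n \to 0$ so that the total substitution error is dominated. The hypothesis $\alpha_n \ne 0$ enters implicitly to guarantee that the substituted frequencies $\lam_{m(n,\ell)} = m(n,\ell) + \alpha_{m(n,\ell)}$ remain non-integer, without which the resulting $Q_k$ would have integer spectrum and $w \cdot Q_k$ would lie in a proper subspace of $A^p(\R)$ — in agreement with the fact that $\Z$ itself is not $p$-generating for $p \le 2$.
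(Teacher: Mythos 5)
Your approach is genuinely different from the paper's, and unfortunately it has a gap that the proposed fix does not resolve.

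The paper's proof of \thmref{thm:Perturb2} does not go through Landau systems at all. Instead, it is built around Olevskii's iterated difference operator $\Delta\phi(t) = \phi(t+1)-\phi(t)$ and the key algebraic fact \eqref{eq:diff_and_poly}: for a polynomial $q$ with frequencies $\lam_n = n+\al_n$, one has $(\Delta^k q)^\wedge(\lam_n) = (e^{2\pi i\al_n}-1)^k\,\hat q(\lam_n)$. This permits the choice $c_n = b_n/(e^{2\pi i\al_n}-1)^{s-l}$ in \eqref{eq:CN5.1}: the coefficients $c_n$ may be enormous, but after applying the right power of $\Delta$ the small factors $(e^{2\pi i\al_n}-1)^{s-j}$ bring them back under control, and \lemref{lem:Petrurb_fin_diff} reduces the $A^p(\R)$ estimate of the error to estimates of differenced quantities $\Psi\cdot\Delta^l(\cdot)$ that one \emph{can} bound. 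The hypothesis $\al_n\ne 0$ enters essentially: it is precisely what makes the division in \eqref{eq:CN5.1} possible.

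Your substitution argument, by contrast, never cancels the large coefficients; it merely multiplies them by small errors. After fixing $l_k$ and obtaining $H_k=\sum c_n e^{2\pi i\sig_n t}$ from the Landau-system completeness, the quantity you need to control is of the form $\sum_n |c_n|\,\bigl(|\sig_n-n|+\sup_m|\al_m|\bigr)$ times fixed constants. You correctly observe that $|c_n|$ is a priori unbounded, but your proposed remedy---``choose $l_k$ large relative to the decay of $\al_n$''---does not work: the coefficients $|c_n|$ are produced by the completeness of the \emph{near-integer} system $\{e^{2\pi i\sig_n t}\}_{n>l_k}$ on a Landau set, and those coefficients necessarily blow up as $l_k\to\infty$ (making the exponents closer to integers makes the system more degenerate). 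There is no a priori bound that decouples $\sup_{n>l_k}|c_n|$ from $\sup_{n>l_k}|\sig_n-n|$, so the product $\sum|c_n|\,|\sig_n-n|$ is not controlled. The same objection applies to the $|\al_m|$-part of the error. The telltale symptom, which you notice but do not resolve, is that nowhere in your outline is $\al_n\ne 0$ genuinely used: every estimate you invoke would apply verbatim with $\al_n\equiv 0$, and if the argument closed one would conclude that $\Z$ is $p$-generating for $1<p\le 2$, contradicting the known non-generating property of arithmetic progressions. To repair this one needs exactly the cancellation mechanism of \cite{Ole97} that the paper implements in \lemref{lem:PR1.2}; a direct substitution of frequencies does not supply it.
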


 The proof combines our techniques with the original ideas 
from \cite{Ole97}.

\subsection{}
\label{sec:LR1.1}
We will obtain \thmref{thm:Perturb2} as a consequence of the following lemma.

\begin{lem}
\label{lem:PR1.2}
Let $\{\lambda_n\}_{n=1}^\infty$ be
a real sequence satisfying \eqref{eq:PQ8.1},
let $v \in J_0(\R)$ and let $f$ be a smooth 
function on $\R$. 
Then for any $p>1$, $\eps > 0$ and any positive integer $N$,
one can find trigonometric
polynomials  $\Gam$ and $Q$ with
the following properties:
\begin{enumerate-num}
\item \label{ail:i} $\spec(\Gam) \sbt \Z$; 
\item \label{ail:ii} $\Gam(t) > 0$ for all $t \in \T$
(in particular, $\Gam$ has no zeros);
\item \label{ail:iii}
$\ft{\Gam}(0)=1$, $\ft{\Gam}(n) \ge 0$ for all $n \in \Z$;
\item \label{ail:iv} $\| \Gam - 1 \|_{A^p(\T)} < \eps$;
\item \label{ail:v} $\spec(Q)$ is contained in the set
$\{\lam_n\}$, $n > N$;
\item \label{ail:vi} $\|v \cdot (\Gam \cdot Q - f) \|_{A^q(\R)} < \eps$
for every $q \ge p$.
\end{enumerate-num}
\end{lem}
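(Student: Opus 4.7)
The proof follows the template of Lemma~\ref{lemQ1.5}, with the sparse-scaling construction replaced by an Oleviskii-style density argument that crucially uses $\alpha_n \neq 0$. The overall plan is to approximate $v \cdot f$ in $A^q(\R)$ by $v \cdot \Gam \cdot Q$, where $\Gam$ satisfies the integer-spectrum conditions (i)--(iv) and $\spec(Q) \sbt \{\lam_n\}_{n > N}$. Since $v \cdot f$ is smooth and compactly supported, it lies in every $A^q(\R)$, and a preliminary reduction replaces $f$ by a trigonometric polynomial with integer spectrum via a Fej\'er-type smoothing on a sufficiently fine periodization scale, reducing to the case where the target on $\supp v$ is a finite sum of integer-frequency exponentials.

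Following the strategy of Lemma~\ref{lemQ1.2}, the core construction is a building block $(\gam, P)$ where $\gam$ is a positive 1-periodic trigonometric polynomial with integer spectrum, $\|\gam - 1\|_{A^p(\T)} < \eps'$, and $P$ has $\spec(P) \sbt \{\lam_n\}_{n > N}$, satisfying $\|v \cdot (\gam \cdot P - e^{2\pi i m t})\|_{A^q(\R)} < \eps'$ for each relevant integer frequency $m$ and every $q \ge p$. Given such blocks, one assembles $\Gam$ as a product and $Q$ as a sum (analogous to~\eqref{eq:Q1.20}), with (i)--(iv) following from the product structure and an estimate of the type~\eqref{eq:Q2.1}, and (vi) following from the sum estimate combined with Lemma~\ref{lem:P3.41} and~\eqref{eq:P7.34}.

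The existence of the building block is established by duality in $A^p(\R)$: if $\beta$ is a tempered distribution with $\ft{\beta} \in L^{p/(p-1)}(\R)$ annihilating $\{v \cdot \gam \cdot e^{2\pi i \lam_n t} : n > N\}$, then $(v \cdot \gam \cdot \beta)^{\wedge}(-\lam_n) = 0$ for $n > N$. Since $v$ is compactly supported in a Landau set $\Omega$ and $\gam$ is 1-periodic, $v \cdot \gam \cdot \beta$ is a tempered distribution of compact support in $\Omega$, so its Fourier transform is continuous and in fact an entire function of exponential type in the Paley--Wiener class. The vanishing at the almost-integer sequence $\{-\lam_n\}$ with $\alpha_n \neq 0$ tending to zero then forces $v \cdot \gam \cdot \beta = 0$ by the Oleviskii uniqueness principle of~\cite{Ole97}. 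Since $\gam$ is smooth and nowhere zero, $1/\gam$ is a smooth 1-periodic function, so $v \cdot \beta = (1/\gam) \cdot (v \cdot \gam \cdot \beta) = 0$, yielding the required annihilation of $v \cdot e^{2\pi i m t}$.

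The main obstacle is the Oleviskii uniqueness principle in the present setting. In~\cite{Ole97} it was established for tempered distributions with $L^2$ Fourier transform using Plancherel; here we need the extension to distributions whose Fourier transform lies in $L^{p/(p-1)}(\R)$. The underlying argument is essentially complex-analytic: it examines $\ft{\mu}$ near each integer $-n$, where the condition $\ft{\mu}(-n - \alpha_n) = 0$ with $\alpha_n \neq 0$ gives, via Taylor expansion, a derivative constraint that, iterated across large $n$, forces $\ft{\mu}$ to vanish on too large a set to be compatible with the Paley--Wiener growth restriction imposed by $\Omega$. Since this argument uses no $L^2$-specific facts beyond compactness of support and Paley--Wiener, the extension is essentially free. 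Once it is in hand, the remainder of the proof is a routine assembly paralleling Lemmas~\ref{lemQ1.2} and~\ref{lemQ1.5}.
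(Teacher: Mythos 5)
Your proposal rests on a claimed ``Olevskii uniqueness principle'': that a tempered distribution $\mu = v \cdot \gam \cdot \beta$ with compact support in a Landau set $\Om(L,h)$ and $\ft{\mu}(-\lam_n)=0$ for $n>N$, where $\lam_n=n+\al_n$ with $0 \neq \al_n \to 0$, must vanish. This is not a theorem of \cite{Ole97}, and it is false for $L\ge 1$. Indeed, for $\al_n \equiv 0$ the exponentials $\{e^{2\pi i n t}\}_{n>N}$ are $1$-periodic, hence certainly not complete in $L^2(\Om(L,h))$ once the Landau set has more than one interval; by the standard stability of incompleteness under small perturbations, the same holds for sequences with $\al_n \to 0$ sufficiently fast. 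What is true is that the \emph{carefully constructed} sequences of Landau \cite{Lan64} (the ``Landau systems'' of Section 4) have this completeness property; general almost-integer sequences do not. Were your uniqueness claim true, \thmref{thm:Perturb} would follow immediately from \corref{cor:L2.11} and Section \ref{sec:Perturb} would be unnecessary, so the very existence of the section should be a warning sign. The condition $\al_n \ne 0$ plays no role in the kind of entire-function zero-counting argument you sketch; in the actual proof it is used quite differently, namely to allow division by $(e^{2\pi i \al_n}-1)^{s-l}$ when constructing the coefficients of the polynomial $q_l$ in \eqref{eq:CN5.1}.

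A second gap is the proposed preliminary reduction that ``replaces $f$ by a trigonometric polynomial with integer spectrum.'' Such a polynomial is $1$-periodic, whereas the data that must be matched is $v \cdot f$ on a Landau set $\Om(L,h)$ consisting of $2L+1$ intervals on which $f$ takes independent values. A periodic target cannot encode these independent values, so the reduction loses exactly the non-periodic information that makes the problem hard. The paper's mechanism for handling this is the iterated difference operator $\Del$ from \cite{Ole97}: the identities \eqref{eq:5.11} and \eqref{eq:6.11} convert the values of $f$ on the several intervals into the iterated differences $\Del^l f$ near a single base interval, \lemref{lem:Petrurb_fin_diff} controls the $A^p(\R)$ norm by these differences, and the construction then handles the orders $l=s,s-1,\dots,1$ one at a time by a careful induction, each step multiplying by a $\gam_l(M_l t)$ with high frequency and adding a polynomial $q_l$ whose coefficients involve the factor $(e^{2\pi i\al_n}-1)^{-(s-l)}$. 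Your proposal contains neither the difference operator nor any substitute for it, so the core of the argument is missing.
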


We recall that the space 
$J_0(\R)$ was defined in \secref{sec:J0.1}.

First we explain  how \lemref{lem:PR1.2} implies \thmref{thm:Perturb2}. 
It is done using a procedure similar to the one used to prove
Theorems \ref{thm:L2.7} and \ref{thm:M2.1}, 
so we shall be more brief.  

\begin{proof}[Proof of \thmref{thm:Perturb2} using \lemref{lem:PR1.2}]
We fix a sequence  $\{\chi_k\}_{k=1}^{\infty} \subset J_0(\R)$ 
which is dense in the space $I_0(\R)$, and let $p_k := 1 + k^{-1}$.
We  construct by induction a sequence of nonnegative functions
 $\{u_k\}_{k=1}^\infty\subset J_0(\R)$ with $\ft{u}_k$ 
 also nonnegative, and trigonometric polynomials 
\begin{equation}
\label{eq:PQ1.1}
Q_k(t) = \sum_{N_k < n < N_{k+1}} c_n e^{2\pi i \lambda_n t}
\end{equation}
such that 
\begin{equation}
\label{eq:5.1}
\|u_k - u_{k-1}\|_{A^p(\R)} < 2^{-k}, \quad p \ge p_k,
\end{equation}
and
\begin{equation}\label{eq:5.2}
\|u_k\cdot Q_j - \chi_j\|_{A^p(\R)} < j^{-1}, \quad p \ge p_j,
 \quad j=1, 2, \ldots, k.
\end{equation}

We begin by setting $u_0 := 0$ and $N_0 := 0$.
Suppose now that we have already defined the 
functions $u_1, \dots, u_{k-1}$ and the trigonometric polynomials $Q_1, \dots, Q_{k-1}$
satisfying
\begin{equation}
\|u_{k-1}\cdot Q_j - \chi_j\|_{A^p(\R)} < j^{-1} \label{eq:5.3}, 
\quad p \ge p_j, 
\quad  j=1, 2, \ldots, k-1.
\end{equation}
 We apply \lemref{lem:L2} to $\chi_k$ and obtain
  a function $\sigma_k\in J_0(\R)$ such that
  $\sigma_k(t) > 0$ for $t \in \supp(\chi_k)$,
and  both $\sig$ and $\ft{\sig}$ are   nonnegative functions.
  Put $v_k = u_{k-1}+\delta_k \sigma_k$,
  where $\del_k>0$ is chosen small enough so that 
 \begin{equation}
 \label{eq:V1}
 \|v_k - u_{k-1}\|_{A^p(\R)} < 2^{-k-1}, \quad p \ge 1,
 \end{equation}
  and the inequalities \eqref{eq:5.3} still hold with the function $v_k$ instead of $u_{k-1}$, that is,
 \begin{equation}
\label{eq:5.4}
\|v_k \cdot Q_j - \chi_j\|_{A^p(\R)} < j^{-1}, 
\quad p \ge p_j, 
\quad  j=1, 2, \ldots, k-1.
\end{equation}
The estimates \eqref{eq:V1},
\eqref{eq:5.4} may be established  using \eqref{eq:P7.63},
  \eqref{est_prod_2}.
Now, since $v_k(t)> 0$ for  $t \in \supp(\chi_k)$, we
may write $\chi_k  =  v_k \cdot f_k$ 
where $f_k$ is a smooth function.
We then invoke \lemref{lem:PR1.2} with $v_k$, $f_k$, $p_k$,
 a small number $\eps_k>0$ and $N_k$, 
and obtain trigonometric polynomials $\Gam_k$ and $Q_k$.
Define $u_k = v_k \cdot \Gam_k$ and observe that 
$u_k \in J_0(\R)$, and both
$u_k$ and $\ft{u}_k$ are nonnegative functions. 
Now, if $p \ge p_k$ then
  using \eqref{eq:P7.34}, \eqref{eq:P3.41} we obtain
 \begin{equation}
 	\|u_{k} - v_k \|_{A^p(\R)} 
 	= 	\|v_{k}  \cdot (\Gam_k -1 ) \|_{A^p(\R)} 
 	\le \nmb{v_{k}} \| \Gam_k - 1  \|_{A^{p_k}(\T)}
 	\le \eps_k \nmb{v_{k}},
\end{equation}
so if $\eps_k$ is small enough then 
\begin{equation}
\label{eq:V4.2}
	\|u_{k} - v_k \|_{A^p(\R)} < 2^{-k-1}, \quad p \ge p_k.
 \end{equation}
We thus obtain \eqref{eq:5.1}
as a consequence of \eqref{eq:V1} and  \eqref{eq:V4.2}.
It is also clear that if $\eps_k$ is sufficiently small then we can 
replace $v_k$ with $u_k$ in the inequalities \eqref{eq:5.4}, 
that is, the inequalities in $\eqref{eq:5.2}$ hold for $j=1, 2, \ldots k-1$. 
Moreover, we have
\begin{equation}
\label{eq:V4.9}
\|u_k\cdot Q_k - \chi_k\|_{A^p(\R)} 
= \|v_k \cdot (\Gam_k \cdot Q_k - f_k) \|_{A^p(\R)} < \eps_k,
\quad p \ge p_k,
\end{equation}
 and therefore if $\eps_k < k^{-1}$ then
 the  inequality in $\eqref{eq:5.2}$ 
 is satisfied also for $j=k$. This completes
the  inductive step of our  construction.
 
 It now follows from \eqref{eq:5.1} that the sequence $\{u_k\}$ converges 
 in the space $A^p(\R)$ for every $p>1$
to a nonnegative function
$w \in \cap_{p>1} A^p(\R)$ such that $\ft{w}$ is also nonnegative.
We may pass to the limit as $k \to +\infty$
in the estimate \eqref{eq:5.2} and conclude that 
\begin{equation}
\label{eq:P7.9}
\|w\cdot Q_j - \chi_j\|_{A^p(\R)} \le j^{-1}, \quad p \ge p_j, 
\quad j = 1,2,\dots.
\end{equation}
The sequence $\{\chi_j\}$ is dense in $I_0(\R)$
  and therefore in $A^p(\R)$ for every $p>1$.
In turn, due to  \eqref{eq:P7.9} this implies that  
  $\{w \cdot Q_j\}$  is again a dense sequence in $A^p(\R)$.
By \eqref{eq:PQ1.1} we conclude
that  for every $N$ the system 
  $\{w(t)e^{2\pi i \lambda_n t}\}$, $n>N$,  is 
  complete in $A^p(\R)$.
\end{proof}

We have thus shown that 
\thmref{thm:Perturb2} is a consequence of 
\lemref{lem:PR1.2}, and in turn,
\thmref{thm:Perturb} follows.
It therefore remains to prove the lemma.

 \subsection{Proof of \lemref{lem:PR1.2}}

The proof will be done in several steps. 

 \subsubsection{}
First we observe that with no loss of generality,
we may assume that $\supp(v)$ is contained 
in $[0, +\infty)$. Indeed, if this is not the case 
then we may apply a translation 
to the right by a sufficiently
large positive integer. Note that the function
$v$ remains in $J_0(\R)$ after an integer translation.
Moreover, the properties
\ref{ail:i}--\ref{ail:iv} of the trigonometric  polynomial
$\Gam$ remain invariant under integer translations,
due to the periodicity of $\Gam$, while the properties
\ref{ail:v}--\ref{ail:vi} 
remain invariant under arbitrary real translations.

We may therefore assume that 
 $v$ is supported on a set of the form 
\begin{equation}
\label{eq:L8.1}
\Om = \bigcup_{j=0}^{s-1}
[j - \tfrac1{2} h,  j + \tfrac1{2} h] 
\end{equation}
where $s$ is a positive integer  and  $0< h < 1$.

 \subsubsection{}
 Let us choose $h'$ such that $h < h' < 1$, and fix a smooth
 function $\Phi$ satisfying   
 \begin{equation}
\label{eq:L8.2}
 \text{$\Phi(t) = 1$ on $  [ - \tfrac1{2} h,   \tfrac1{2} h]$},
 \quad 
\supp(\Phi) \sbt 
 [ - \tfrac1{2} h',   \tfrac1{2} h'] .
\end{equation}
We define
 \begin{equation}
 \label{eq:T6.7}
 \Theta (t) = \sum_{j=0}^{s-1}\Phi(t-j)
 \end{equation}
 and observe that $\Theta (t)=1$ on $\Om$, 
 while $\supp(\Theta )$ is contained in the set
 \begin{equation}
\label{eq:L8.7}
\Om' = \bigcup_{j=0}^{s-1}
[j - \tfrac1{2} h',  j + \tfrac1{2} h'].
\end{equation}
Next, we choose $h''$ such that $h < h' < h'' < 1$, and fix a smooth
 function $\Psi$ satisfying   
 \begin{equation}
\label{eq:L8.9}
 \text{$\Psi(t) = 1$ on $  [ - \tfrac1{2} h',   \tfrac1{2} h']$},
 \quad 
\supp(\Psi) \sbt 
 [ - \tfrac1{2} h'',   \tfrac1{2} h''] .
\end{equation}
 We note the following simple properties of the functions thus defined,
 \begin{equation}
 \label{eq:L9.22}
 \Psi \cdot \Phi = \Phi, \quad \Theta \cdot  v = v,
 \end{equation}
 and
 \begin{equation}
 \label{eq:L8.41}
 \Psi(t) \Theta(t+j) = \Phi(t), \quad 0 \le j \le s-1.
 \end{equation}

 \subsubsection{}
 A key idea in \cite{Ole97} involves iterations of the difference operator 
 $\Del$ defined by
 \begin{equation}
 	\label{eq:D5.1}
\Delta^0\phi = \phi, \quad
(\Delta\phi)(t) = \phi(t+1)-\phi(t), \quad
\Delta^k \phi = \Delta(\Delta^{k-1}\phi),
 \end{equation}  
where $\phi$ is an  arbitrary function on $\R$.

Due to \eqref{eq:PQ8.1}, for any trigonometric  polynomial 
  $q(t) = \sum_{n} a_n e^{2\pi i \lambda_n t}$ 
  we have
 \begin{equation}
 	\label{eq:diff_and_poly}
 	(\Delta^k q)(t) = \sum_{n} a_n (e^{2\pi i \alpha_n}-1)^k e^{2\pi i\lambda_n t}. 
 \end{equation}  
 In particular, $\Delta^k q$ is also a trigonometric polynomial,
 and $\spec(\Delta^k q) \sbt \spec(q)$.
 
 The following identity was stated in \cite{Ole97},
 \begin{equation}
 \label{eq:5.11}
 	\phi(t+j) = \sum_{l=0}^j \binom{j}{l} (\Delta^l\phi)(t), \quad 0 \le j \le s-1.
 \end{equation}
For the reader's benefit we
 include a quick proof of \eqref{eq:5.11}. Let  $I$ denote the
 identity operator,  $I \phi = \phi$,  and set
 $(T \phi)(t) = \phi(t+1)$, then we have $T = I + \Del$. Hence
 $T^j \phi = (I+ \Del)^j \phi$, and after opening the brackets 
 one arrives at the identity \eqref{eq:5.11}.

  \subsubsection{}
The main idea of using iterated differences can be formulated as follows: given a function supported in a long segment of length $s$, one can use the formula \eqref{eq:5.11} to reconstruct it from the values of $s$ iterated differences in a segment of unit length.

In particular, the next lemma manifests this idea by estimating the $A^p(\R)$ norm of a function 
 $\phi$  in terms of its $s$ iterated differences.

 \begin{lem}
 \label{lem:Petrurb_fin_diff}
 	Let $\phi$ be a smooth function supported in  $\Omega'$. Then
 	\begin{equation}
 	\label{eq:5.13}
 		\|\phi\|_{A^p(\R)}\le 2^{s} \max_{0\le l\le s-1} \|\Psi \cdot \Delta^l \phi\|_{A^p(\R)}.
 	\end{equation}
 \end{lem}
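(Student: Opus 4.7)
My plan is to decompose $\phi$ into pieces localized near each integer $j \in \{0, 1, \dots, s-1\}$, translate each piece back to the origin, and then use the finite-difference identity \eqref{eq:5.11} to express each translate in terms of iterated differences of $\phi$.

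The decomposition goes as follows. Since $\Psi = 1$ on $[-h'/2, h'/2]$ with support in $[-h''/2, h''/2]$, and since $h', h'' < 1$, for any $t \in \Omega'$ there is a unique $j_0 \in \{0, \dots, s-1\}$ with $t \in [j_0 - h'/2, j_0 + h'/2]$; for this $j_0$ we have $\Psi(t - j_0) = 1$, while for $j \ne j_0$ the estimate $|t - j| \ge 1 - h'/2 > h''/2$ forces $\Psi(t-j) = 0$. Combined with the fact that $\phi$ vanishes outside $\Omega'$, this yields the identity
\begin{equation}
\phi(t) \;=\; \sum_{j=0}^{s-1} \phi_j(t), \qquad \phi_j(t) := \phi(t)\,\Psi(t-j).
\end{equation}

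Next, applying the triangle inequality in $A^p(\R)$ together with the translation invariance of the $A^p(\R)$ norm (translation acts on the Fourier side as multiplication by a unimodular function, hence is an isometry), I get
\begin{equation}
\|\phi\|_{A^p(\R)} \;\le\; \sum_{j=0}^{s-1} \|\phi_j\|_{A^p(\R)} \;=\; \sum_{j=0}^{s-1} \bigl\|\phi(\cdot + j)\cdot \Psi \bigr\|_{A^p(\R)}.
\end{equation}
Now I invoke the identity $\phi(t+j) = \sum_{l=0}^{j}\binom{j}{l}(\Delta^l \phi)(t)$, multiply through by $\Psi(t)$, and apply the triangle inequality once more to obtain
\begin{equation}
\bigl\|\phi(\cdot + j)\cdot \Psi \bigr\|_{A^p(\R)} \;\le\; \sum_{l=0}^{j}\binom{j}{l}\,\|\Psi \cdot \Delta^l \phi\|_{A^p(\R)} \;\le\; 2^j \max_{0 \le l \le s-1} \|\Psi \cdot \Delta^l \phi\|_{A^p(\R)}.
\end{equation}
Summing over $j = 0, 1, \dots, s-1$ gives a total factor $\sum_{j=0}^{s-1} 2^j = 2^s - 1 < 2^s$, which yields \eqref{eq:5.13}.

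There is no substantial obstacle here: the argument is essentially a combinatorial bookkeeping step built on top of two preexisting ingredients, namely the finite-difference identity \eqref{eq:5.11} and the partition-of-unity properties of $\Psi$ recorded in \eqref{eq:L8.9} and \eqref{eq:L9.22}. The only point that requires care is verifying the decomposition $\phi = \sum_j \phi(\cdot)\Psi(\cdot - j)$ on the nose, which depends crucially on the gap $1 - h'/2 > h''/2$ between consecutive translated copies of $\supp(\Psi)$; this is where the choice $h < h' < h'' < 1$ made earlier is used.
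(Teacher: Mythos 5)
Your proof is correct and follows essentially the same route as the paper: decompose $\phi$ using translated copies of $\Psi$ (which is justified because $\phi$ is supported in $\Omega'$ and the translates of $\supp(\Psi)$ are pairwise disjoint), apply translation invariance of the $A^p(\R)$ norm, and feed identity \eqref{eq:5.11} into the triangle inequality to get the factor $\sum_{j=0}^{s-1} 2^j < 2^s$. The only cosmetic difference is that you spell out the disjointness of the supports of $\Psi(\cdot - j)$ explicitly, which the paper leaves implicit.
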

 
\begin{proof}
If we multiply both sides of  
 \eqref{eq:5.11}  by $\Psi(t)$,
 take the $A^p(\R)$ norm of both sides
 and apply the triangle inequality, then we  obtain
 \begin{equation}
\label{eq:5.78}
\| \Psi(t) \phi(t+j) \|_{A^p(\R)} \le 2^j \max_{0 \le l \le j}
 \| \Psi \cdot \Delta^l\phi \|_{A^p(\R)}.
\end{equation}
Since $\phi$ is supported in  $\Omega'$, we have
$\phi(t) = \sum_{j=0}^{s-1} \Psi(t-j) \phi(t)$, hence
\begin{equation}
\label{eq:5.85}
\|  \phi  \|_{A^p(\R)} \le
 \sum_{j=0}^{s-1} \|  \Psi(t-j)  \phi(t)  \|_{A^p(\R)}
 =  \sum_{j=0}^{s-1} \| \Psi(t)  \phi(t+j)  \|_{A^p(\R)},
\end{equation}
where the last equality holds due to 
 the invariance of the $A^p(\R)$ norm under translation.
Combining the estimates 
\eqref{eq:5.78} and \eqref{eq:5.85} implies \eqref{eq:5.13}.
 \end{proof}

   \subsubsection{}
    The identity \eqref{eq:5.11} admits the following inverse,
 \begin{equation}
 \label{eq:6.11}
 (\Del^l \phi )(t) = 
\sum_{j=0}^{l} \binom{l}{j} (-1)^{l-j} \phi(t+j), \quad 0 \le l \le s-1,
 \end{equation}
which can be established in a similar way
to \eqref{eq:5.11}. 
It follows from \eqref{eq:L8.41}, \eqref{eq:6.11}  that
 	\begin{equation}
 	\label{eq:T1.3}
 	\Psi \cdot \Del^l (\Theta \cdot \phi) = \Phi \cdot \Del^l \phi, \quad
 	0 \le l \le s-1,
 	\end{equation}
where $\phi$ is an arbitrary function on $\R$.
The straightforward verification is left to the reader.

 \subsubsection{} \label{sec:S6.2}
 Let us now  explain the role of 
 \lemref{lem:Petrurb_fin_diff} in the proof.
We will show that thanks to this lemma, the
last condition \ref{ail:vi} of  \lemref{lem:PR1.2}
can be obtained as a consequence of
 the following  condition,
\begin{equation}
\label{eq:lemsecond}
\|\Phi\cdot\Delta^l(\Gamma\cdot Q - f)\|_{A^p(\R)} < \delta, \quad 0 \le l \le s-1,
\end{equation}
provided that $\del = \del (\eps, v) > 0$ is a sufficiently small number.

Indeed, assume that 
\eqref{eq:lemsecond} holds
and let us check that condition \ref{ail:vi} follows.
  Let $q \ge p $,  then using \eqref{eq:P7.63},
  \eqref{eq:L9.22}, \eqref{eq:5.13},
\eqref{eq:T1.3}, \eqref{eq:lemsecond},
and since  $\supp(\Theta) \sbt \Om'$,  we obtain
\begin{align}
& \|v \cdot (\Gam \cdot Q -  f) \|_{A^q(\R)} 
 = \|\Theta \cdot v \cdot ( \Gamma\cdot Q -  f) \|_{A^q(\R)}\\[6pt]
	& \qquad 	 \le \nmb{v} \cdot  \| \Theta \cdot(\Gamma\cdot Q - f) \|_{A^p(\R)}\\[6pt]
	 & \qquad \le  \nmb{v} \cdot 2^s \max_{0\le l\le s-1} 
	 \| \Psi \cdot \Delta^l(\Theta \cdot(\Gamma\cdot Q - f))\|_{A^p(\R)} \\[6pt]
	 	 & \qquad =  \nmb{v} \cdot 2^s \max_{0\le l\le s-1} 
	 \| \Phi \cdot \Delta^l(\Gamma\cdot Q - f)\|_{A^p(\R)}
	 \le  2^s \del \nmb{v},
\end{align}
hence \ref{ail:vi} follows provided that  $  2^s \del \nmb{v} < \eps$.

 \subsubsection{}
 We now turn to the  construction of the trigonometric
polynomials  $\Gam$ and $Q$ satisfying
the conditions  \ref{ail:i}--\ref{ail:vi}
of \lemref{lem:PR1.2}. 
As we have just seen, the
last condition \ref{ail:vi} will follow from 
\eqref{eq:lemsecond}.

The construction involves an inductive process that we
 now describe. By induction we will define trigonometric 
polynomials $q_1, q_2, \ldots, q_{s}$ such that
\begin{equation}
\label{eq:Q7.5}
	q_j(t) = \sum_{N_j < n < N'_j} c_n e^{2\pi i\lambda_nt},
\end{equation}
together with trigonometric polynomials $\gamma_1, \gamma_2, \ldots, \gamma_{s}$ satisfying
\begin{equation}
\label{eq:Q4.2}
\gam_j(t)>0, \quad 
\spec(	 {\gamma}_j) \sbt \Z,  \quad
	\ft{\gamma}_j(0)  = 1, \quad \ft{\gamma}_j(n)  \ge 0, \quad n \in \Z,
\end{equation}
and  fast increasing integers $M_1 < M_2 < \dots < M_{s}$. 
We denote
\begin{equation}
\gamma_j^{(M_j)}(t) = \gamma_j(M_j t), \quad 
\Gamma_l(t) = \prod_{j=1}^l \gamma_j^{(M_j)}(t), \quad 
Q_l(t) = \sum_{j=1}^l q_j(t).
\end{equation}
We also set $\Gamma_0 = 1$ and $Q_0 = 0$.

Suppose that at the $l$'th step we are given a small   $\delta_l > 0$
 (the choice of  $\del_l$ will be specified later).
 We will construct $q_l$, $\gamma_l$ and $M_l$  
so that the following properties hold,
\begin{gather}
\label{eq:5.29}
	\|\Phi \cdot \Delta^{s-l}(f-\Gamma_l\cdot Q_l)\|_{A^p(\R)} < \delta_l, \\[6pt]
\label{eq:5.30}
 \|\Phi \cdot\Delta^{s-j}(\Gamma_{l} \cdot Q_l 
     - \Gam_{l-1} \cdot Q_{l-1})\|_{A^p(\R)} < \del_l,  \quad 1 \le j \le l-1, \\[6pt]
\label{eq:5.31}
	\|\Gam_l - \Gam_{l-1} \|_{A^p(\T)} < \delta_l.
\end{gather}

The construction of $q_l$, $\gamma_l$ and $M_l$ is done
as follows.  Suppose 
 that $q_j$, $\gamma_j$ and $M_j$ are already defined for 
 $j = 1, \dots, l-1$. Define a $1$-periodic function  $g_l$ 
 on $\R$ by
\begin{equation}
\label{eq:6.53}
	g_l (t) = \frac{\Psi(t)}{\Gamma_{l-1}(t)}\cdot (\Delta^{s-l}(f - \Gamma_{l-1}\cdot Q_{l-1}))(t),
	\quad t \in [-\tfrac1{2}, \tfrac1{2}).
\end{equation}
The fact that $\Psi$ is  supported 
on $ [ - \tfrac1{2} h'',   \tfrac1{2} h''] $ implies that
$g_l$ is a smooth function. In particular $g_l$ has an
absolutely convergent Fourier series, that is, $g_l \in A(\T)$.
Let $\widetilde{g}_l$ be a Fourier partial sum of $g_l$ 
of sufficiently high order so that
\begin{equation}
\label{eq:partial_sum_approx}
\nmb{\Phi \cdot \Gamma_{l-1} } \cdot  \|g_l-\widetilde{g}_l\|_{A(\T)} < \tfrac1{6} \delta_l.
\end{equation}

Now we choose a small number $\eps_l > 0$ 
and apply \lemref{lemQ1.2} with $\eps=\eps_l$. 
The lemma provides us with trigonometric polynomials $P_l$ and 
$\gamma_l$ with integer spectrum,
 such that $\gamma_l(t)>0$,  $\ft{\gamma}_l(0) = 1$, 
  $\ft{\gamma}_l(n) \ge 0$ for $n \in \Z$,
   $\ft{P}_l(-n) = 0$ for $n \ge 0$, and
\begin{equation}\label{eq:applic_lemma}
	\|\gamma_l - 1\|_{A^p(\T)} < \eps_l, \quad \|P_l\cdot\gamma_l - 1\|_{A^p(\T)} < \eps_l.
\end{equation} 
Next we choose a large positive integer  $M_l$ and define a trigonometric polynomial
\begin{equation}
\label{eq:pl_def}
	p_l(t)=\widetilde{g}_l(t)\cdot P_l(M_l t).
\end{equation}
For any given integer $N_l$, we can choose
 $M_l = M_l(\widetilde{g}_l, P_l, N_l)$ 
sufficiently large so that $\spec(p_l)$ contains only
frequencies larger than $N_l$, so that $p_l$ has the form
\begin{equation}
\label{eq:CN7.2}
	p_l(t) = \sum_{N_l < n < N'_l} b_n e^{2\pi i n t}.
\end{equation}
We have the estimate
\begin{equation}
\label{eq:5.38}
	\sum_{N_l < n < N'_l}|b_n| = \|p_l\|_{A(\T)} \le \|g_l\|_{A(\T)}\cdot \|P_l\|_{A(\T)},
\end{equation}
and we note that this estimate does not depend on the 
choice of $N_l$ and $M_l$.

Now we define the trigonometric polynomial $q_l$ as follows,
\begin{equation}
\label{eq:CN5.1}
	q_l(t) = \sum_{N_l < n < N'_l}c_n e^{2\pi i \lambda_n t},  \quad
	 c_n = \frac{b_n}{(e^{2\pi i \alpha_n} - 1)^{s- l}}.
\end{equation}
By choosing $N_l$ large enough and using the assumption
$0 \ne \al_n \to 0$, we can ensure
that the denominator in \eqref{eq:CN5.1} does not vanish.
It follows from \eqref{eq:diff_and_poly} that
\begin{equation}
\label{eq:5.40}
(\Delta^{s-j} q_l)(t) = \sum_{N_l< n < N'_l} 
b_n (e^{2\pi i \alpha_n} - 1)^{l-j} e^{2\pi i \lambda_n t}, \quad
1 \le j \le l.
\end{equation}
In particular, for $j=l$ this yields
\begin{equation}
\label{eq:CN5.5}
(\Delta^{s-l} q_l)(t) = \sum_{N_l< n < N'_l} 
b_n e^{2\pi i \lambda_n t}.
\end{equation}
On the other hand,  for $1 \le j \le l-1$ 
we obtain from \eqref{eq:5.38}, \eqref{eq:5.40} the following estimate
for the $\ell^1$ norm of the
coefficients of the trigonometric polynomial
$\Delta^{s - j} q_l$,
\begin{equation}
\label{eq:CN5.2}
	\|(\Delta^{s-j} q_l)^\wedge\|_{1}\le  \|g_l\|_{A(\T)}\cdot \|P_l\|_{A(\T)}
	\cdot (2\pi)^{l-j} \sup_{n > N_l} |\alpha_n|^{l-j},
\end{equation}
where we have also used here the elementary
  inequality $|e^{2\pi i \alpha_n} - 1|\le 2\pi|\alpha_n|$.
  Since $\al_n \to 0$,
 the right-hand side of \eqref{eq:CN5.2}
 can be made arbitrarily small if $N_l$ is chosen large enough.
 Hence we may choose 
  $N_l$ so that
\begin{equation}
\label{eq:CN5.3}
	\|(\Delta^{s-j} q_l)^\wedge\|_{1} < \eps_l, \quad 1 \le j \le l-1.
\end{equation}

Let us now show that we can choose
 $\eps_l$ small enough and $N_l$, $M_l$  large enough so that 
the conditions \eqref{eq:5.29}, \eqref{eq:5.30}, \eqref{eq:5.31}
are fulfilled.
First we establish \eqref{eq:5.31} as follows,
\begin{align}
&\|\Gam_l - \Gam_{l-1} \|_{A^p(\T)} =
\|\Gam_{l-1}  \cdot (\gamma_l^{(M_l)} - 1)  \|_{A^p(\T)} \\[6pt]
& \qquad \le \|\Gam_{l-1}\|_{A(\T)}  \cdot \|\gamma_l - 1  \|_{A^p(\T)}  
\le \|\Gam_{l-1}\|_{A(\T)}  \cdot \eps_l  < \delta_l
\end{align}
provided that $\eps_l$ is small enough.

Next we check the condition \eqref{eq:5.30}.  We have
\begin{equation}
\label{eq:CN6.1}
\Gamma_{l} \cdot Q_l   - \Gam_{l-1} \cdot Q_{l-1} 
= \Gamma_{l-1} \cdot Q_{l-1} \cdot ( \gamma_l^{(M_l)}- 1 )
+ \Gamma_{l}  \cdot q_l.
\end{equation}
We consider each of the two summands on the right hand side of
\eqref{eq:CN6.1}. We start with the first summand. 
Note  that if $\phi, \psi$ are two functions on $\R$
and $\phi$ is $1$-periodic, then
$\Delta^k(\phi \cdot \psi) = \phi \cdot \Delta^k \psi$,
which can be verified
e.g.\ using \eqref{eq:6.11}. Hence
\begin{equation}
\label{eq:CN6.5}
\Phi \cdot \Del^{s-j} (\Gamma_{l-1} \cdot Q_{l-1} \cdot ( \gamma_l^{(M_l)}- 1 ))
= \Phi \cdot 
\Gamma_{l-1} \cdot ( \gamma_l^{(M_l)}- 1 ) \cdot \Del^{s-j} (Q_{l-1} ),
\end{equation}
and as a consequence, 
the $A^p(\R)$ norm of \eqref{eq:CN6.5} does not exceed
\begin{equation}
\label{eq:CN6.12}
\nmb{ \Phi \cdot 
\Gamma_{l-1} \cdot  \Del^{s-j} (Q_{l-1} ) } \cdot \| \gamma_l - 1 \|_{A^p(\T)} \le 
\nmb{ \Phi \cdot 
\Gamma_{l-1} \cdot  \Del^{s-j} (Q_{l-1} ) } \cdot \eps_l  < \tfrac1{2} \del_l
\end{equation}
for $\eps_l$ small enough. We next consider the second
summand in \eqref{eq:CN6.1}.  Since we have
\begin{equation}
\label{eq:CN6.9}
 \Phi \cdot \Del^{s-j} ( \Gamma_{l}  \cdot q_l ) 
=  \Phi \cdot \Gamma_{l-1}  \cdot \gamma_l^{(M_l)} \cdot \Del^{s-j}  (q_l),
\end{equation}
and using \eqref{est_prod_2},
\eqref{eq:CN5.3} we obtain that the $A^p(\R)$ norm of \eqref{eq:CN6.9} does not exceed
\begin{equation}
\label{eq:CN6.13}
\nmb{ \Phi \cdot 
\Gamma_{l-1} } \cdot 
\|  \gamma_{l}   \|_{A^p(\T)} \cdot 
	\|(\Delta^{s-j} q_l)^\wedge\|_{1}
\le
\nmb{ \Phi \cdot 
\Gamma_{l-1} } \cdot (1 + \eps_l) \cdot \eps_l < \tfrac1{2} \del_l
\end{equation}
for $1 \le j \le l-1$, provided that
 $\eps_l$ is small enough. 
 Hence we obtain \eqref{eq:5.30}
 as a consequence of \eqref{eq:CN6.1}, \eqref{eq:CN6.5}, \eqref{eq:CN6.9}
 and the estimates
 \eqref{eq:CN6.12}, \eqref{eq:CN6.13}.

Lastly,  we must verify condition \eqref{eq:5.29} as well. Due to
 \eqref{eq:L9.22}, \eqref{eq:6.53} we have
\begin{equation}
\label{eq:DA7.4}
	\Phi \cdot \Gamma_{l-1} \cdot g_l 
	= \Phi \cdot \Delta^{s-l}(f - \Gamma_{l-1}\cdot Q_{l-1}),
\end{equation} 
hence
\begin{align}
	&\Phi \cdot \Delta^{s-l}(f - \Gamma_l\cdot Q_l) 
	= \Phi\cdot \Gamma_{l-1}\cdot g_l
	+  \Phi \cdot\Delta^{s-l}( \Gam_{l-1} \cdot Q_{l-1} - \Gamma_{l} \cdot Q_l ) \label{eq:DA7.7} \\
	& \qquad = \Phi\cdot \Gamma_{l-1}\cdot (g_l
	+   \Delta^{s-l}( Q_{l-1} - \gamma_l^{(M_l)} \cdot Q_l   )) \\
	& \qquad = \Phi\cdot \Gamma_{l-1}\cdot ( g_l -  \gamma_l^{(M_l)} \cdot \Delta^{s-l}(q_l ))
+ \Phi\cdot \Gamma_{l-1}\cdot  ( 1-  \gamma_l^{(M_l)}  ) \cdot  \Delta^{s-l}(   Q_{l-1} ).
\label{eq:DA1.4}
\end{align} 
We estimate the $A^p(\R)$ norm of each summand in \eqref{eq:DA1.4}.
The $A^p(\R)$ norm of the second summand does not exceed
\begin{equation}
\label{eq:DA1.5}
\nmb{
 \Phi\cdot \Gamma_{l-1} \cdot  \Delta^{s-l}(   Q_{l-1})  } \cdot 
 \|  1 -  \gamma_l   \|_{A^p(\T)} 
 \le 
 \nmb{
 \Phi\cdot \Gamma_{l-1} \cdot  \Delta^{s-l}(   Q_{l-1})  } \cdot \eps_l < \tfrac1{2} \del_l
\end{equation}
for $\eps_l$ small enough.

To estimate the 
 first summand in \eqref{eq:DA1.4} we denote
 $P_l^{(M_l)}(t) = P_l(M_lt)$, and recall that  we have
 $p_l = \widetilde{g}_l\cdot P_l^{(M_l)}$ according
 to \eqref{eq:pl_def}. Hence we may write
\begin{align}
\label{eq:DA41.1}
& g_l -  \gamma_l^{(M_l)} \cdot \Delta^{s-l}(q_l )
 =   (g_l-\widetilde{g}_l) +   
 \widetilde{g}_l \cdot (1 - P_l^{(M_l)}\cdot\gamma_l^{(M_l)}) \\[6pt]
\label{eq:DA41.2}
    & \qquad 
 + \gamma_l^{(M_l)} \cdot (p_l - \Delta^{s-l}(q_l)),
\end{align}
and so now we have three terms to estimate.
Using \eqref{eq:P7.34}, \eqref{eq:P3.41}  and \eqref{eq:partial_sum_approx}
we have
\begin{equation}
\| \Phi\cdot \Gamma_{l-1} \cdot (g_l-\widetilde{g}_l)\|_{A^p(\R)}
\le \nmb{\Phi \cdot \Gamma_{l-1} } \cdot  \|g_l-\widetilde{g}_l\|_{A(\T)} < \tfrac1{6} \delta_l.
\end{equation}
Next, we have
\begin{align}
& \| \Phi\cdot \Gamma_{l-1} \cdot \widetilde{g}_l \cdot (1-P_l^{(M_l)}\cdot\gamma_l^{(M_l)})  \|_{A^p(\R)} \\[6pt]
& \qquad 
\le \nmb{\Phi \cdot \Gamma_{l-1} \cdot \widetilde{g}_l } \cdot
\|1 - P_l \cdot \gamma_l \|_{A^p(\T)}
\le  \nmb{\Phi \cdot \Gamma_{l-1} \cdot \widetilde{g}_l } \cdot \eps_l < \tfrac1{6} \delta_l
\end{align}
for $\eps_l$ small enough. Lastly, using \eqref{est_prod_2},
\eqref{eq:CN7.2}, \eqref{eq:CN5.5}, we have
\begin{align}
& \| \Phi\cdot \Gamma_{l-1} \cdot
 \gamma_l^{(M_l)} \cdot (p_l - \Delta^{s-l}(q_l))   \|_{A^p(\R)} \\[6pt]
& \qquad 
= \|  \Phi(t) \cdot \Gamma_{l}(t) \cdot 
 \sum_{N_l< n < N'_l} 
b_n ( e^{2\pi i n t} - e^{2\pi i \lambda_n t} )  \|_{A^p(\R)} \\
&  \label{eq:CN9.5}
\qquad 
\le \| \Gamma_{l} \|_{A(\T)} \cdot
\Big(\sum_{N_l < n < N'_l} |b_n|\Big) \cdot
\sup_{n > N_l}
 \Big( \int_\R |\ft{\Phi}(x-n) - \ft{\Phi}(x-\lam_n)|^p\, dx \Big)^{1/p}.
\end{align}
Note that the first two factors in \eqref{eq:CN9.5}
can be estimated independently of $N_l$ and $M_l$. Indeed,
the first factor does not exceed $\| \Gamma_{l-1} \|_{A(\T)} \cdot
\| \gam_l \|_{A(\T)}$, while the second factor admits 
the estimate \eqref{eq:5.38}.
Since we have $\lam_n = n + \al_n$ and $\al_n \to 0$, 
we may therefore choose $N_l$ large enough
so that  \eqref{eq:CN9.5} becomes smaller than $\frac1{6} \del_l$.
Summing up the last three estimates,  and using
\eqref{eq:DA41.1}--\eqref{eq:DA41.2}, we obtain
\begin{equation}
\| \Phi\cdot \Gamma_{l-1}\cdot ( g_l -  \gamma_l^{(M_l)} \cdot \Delta^{s-l}(q_l ))
 \|_{A^p(\R)} < \tfrac1{2} \delta_l.
\end{equation}
In turn, together with
\eqref{eq:DA7.7}--\eqref{eq:DA1.5}
this finally yields the condition \eqref{eq:5.29}.
Hence we have verified that our inductive procedure 
indeed provides us with $q_l$, $\gamma_l$ and $M_l$  
such that \eqref{eq:Q7.5}, \eqref{eq:Q4.2} hold
and the properties 
\eqref{eq:5.29}, \eqref{eq:5.30}, \eqref{eq:5.31}
are satisfied.

\subsubsection{}
We are now able to complete the 
proof of \lemref{lem:PR1.2} by exhibiting
 the required trigonometric polynomials  $\Gam$ and $Q$. 
Indeed, we take 
$\Gamma= \Gamma_{s}$ and  $Q= Q_{s}$. 
We show that if at the $l$'th step 
of the inductive construction we choose 
$\delta_l$  sufficiently small  and 
$N_l$, $M_l$ sufficiently large,
then $\Gamma$ and $Q$ will satisfy 
 conditions  \ref{ail:i}--\ref{ail:vi}
of \lemref{lem:PR1.2}. 

The fact that conditions  \ref{ail:i}--\ref{ail:ii} hold is obvious.

Since $\Gamma(t) = \prod_{l=1}^{s}\gamma_l(M_l t)$, if we choose $M_l$ 
at the $l$'th step large enough, then
\begin{equation}
\label{eq:spectr_sep}
\ft{\Gamma}(0) = \prod_{l=1}^{s} \ft{\gamma}_l(0) = 1.
\end{equation}
This can be verified, as before, by expanding each $\gamma_l$ in its (finite) Fourier series 
and opening the brackets, see \cite[Lemma 3.2]{LT26}.

It is also obvious that 
$\ft{\Gamma}(n) \ge 0$ for all $n \in \Z$, so
we obtain condition \ref{ail:iii}.

Next we note, using \eqref{eq:5.31}, that
\begin{equation}
	\|\Gam - 1  \|_{A^p(\T)}
	 \le  \sum_{l=1}^{s} \|\Gam_{l} - \Gam_{l-1}   \|_{A^p(\T)}
	 \le  \sum_{l=1}^{s} \del_l < \eps
\end{equation}
if the numbers $\delta_l$  are small enough,
so the condition \ref{ail:iv} is established.

Since we have $Q(t) = \sum_{l=1}^{s} q_l(t)$,
the condition \ref{ail:v} follows 
from \eqref{eq:CN5.1} provided that 
all the numbers $N_l$ are larger than $N$.

It remains to verify the last condition \ref{ail:vi}.
We have seen above that this condition follows
if \eqref{eq:lemsecond} holds for 
a sufficiently small $\del = \del(\eps,v) > 0$.
So, it suffices to show that
\begin{equation}
\|\Phi\cdot\Delta^{s-l}(\Gamma\cdot Q - f)\|_{A^p(\R)} < \delta, \quad 1 \le l \le s,
\end{equation}
provided that  the numbers $\delta_l$  are small enough.
Indeed, due to \eqref{eq:5.29}, \eqref{eq:5.30} we have
\begin{align}
&	\|\Phi\cdot\Delta^{s-l}(\Gamma \cdot Q  - f)\|_{A^p(\R)}\label{eq:5.62}
 \le 	\|\Phi\cdot\Delta^{s-l}(\Gamma_{l}\cdot Q_{l} - f)\|_{A^p(\R)} \\[6pt]
\label{eq:5.64} 
& \qquad +	\sum_{k=l+1}^{s}
\| \Phi \cdot\Delta^{s-l}(\Gamma_{k}\cdot Q_{k}
 - \Gamma_{k-1} \cdot Q_{k-1} )\|_{A^p(\R)} 
\le \sum_{k=l}^{s} \del_l < \del,
\end{align}
for $\delta_l$ small enough. Hence we verified that
\eqref{eq:lemsecond} holds, and as a consequence,
the condition \ref{ail:vi} is established
and \lemref{lem:PR1.2} is proved.
\qed

\subsection{Remarks}
\label{sec:R2.7}

1. As before, we can make the
``generator'' $g$  in \thmref{thm:Perturb} infinitely smooth,
by ensuring that 
$\int_{\R} w(t) (1+|t|)^n  dt $ is finite   
for each positive $n$.  Indeed, one can infer
from the proof of \thmref{thm:Perturb2} 
(see  \secref{sec:LR1.1}) that 
the sequence $\{u_k\}$ which converges to $w$
is given by
$u_k  = \sum_{j=1}^{k} \del_j \sig_j \cdot 
 \Gam_j  \cdot \Gam_{j+1}  \cdots \Gam_k$.
The proof of \lemref{lem:PR1.2} allows us to
choose the polynomials $\Gam_k$ such that
$\int_{\T} \Gam_j(t) \Gam_{j+1}(t)  \cdots \Gam_k(t) dt = 1$
 for all $j,k$ such that $j \le k$.
Hence $\int_{\R} u_k(t)  (1+|t|)^n  dt$   does not exceed
\begin{equation}
\label{eq:R6.2}
\sum_{j=1}^{k} \del_j M_n(\sig_j), \quad 
M_{n}(\sig_j) := \sup_{t \in \R} \sum_{\nu \in \Z} \sig_j(t-\nu) (1+|t-\nu|)^n,
\end{equation}
so the desired conclusion follows e.g.\ by
choosing $\del_j$ such that
$\del_j M_j(\sig_j) < 2^{-j}$.

2. The question whether $g$ may be chosen to have
fast decay, was studied in \cite{OU04}.
 It was proved that in general the answer is negative: 
there exists a real sequence
$\{\lam_n\}$, $n \in \Z$,  such that 
$\lam_n = n + \al_n$ where $0 \ne \al_n \to 0$
as $|n| \to + \infty$, but such that  there is no function
$g \in (L^2 \cap L^1)(\R)$ whose translates
$\{g(x - \lam_ n)\}$, $n \in \Z$, span $L^2(\R)$.
It follows, see \cite[Proposition 12.24]{OU16},
that the same ``almost integer'' sequence 
$\{\lam_n\}$  does not admit a generator 
$g \in (L^p \cap L^1)(\R)$ for the space $L^p(\R)$,
$1<p<2$.


\section{Finite local complexity sequences of translates}
\label{sec:Q6}

\subsection{}
In this section we prove \thmref{thm:M1.2}. 
 The main ingredients of the proof 
 consist of  \thmref{thm:L2.7}
(or its consequence \corref{cor:L2.11}),
and the following lemma.

\begin{lem}
\label{lem:L8.9}
Let $a$ be an irrational positive real number,
 $\Om = \Om(L, h)$ be a Landau set
given by \eqref{eq:L2.2},   $\chi$ be a continuous function
on $\Om$, and let $\lam_0 \in \R$ and $\eps>0$ be given.
Then there exists a trigonometric polynomial
$P(t) = \sum_{n=1}^{K}  c_n  e^{2 \pi i \lam_n t}$
such that 
\begin{enumerate-num}
\item \label{flc:i}   $|P(t) - \chi(t)| \le \eps$ for all $t \in \Om$;
\item \label{flc:ii} $\lam_{n+1} - \lam_n \in \{1,a\}$ for $n=0,1,\dots, K-1$.
\end{enumerate-num}

\end{lem}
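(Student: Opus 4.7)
The plan is to build $P$ with a block-product structure,
\[
P(t) \;=\; \sum_{j=0}^{2L} e^{2\pi i (\lam_0 + L_j + ja) t}\, Q_j(t),
\]
where $L_0 := 0$, $L_{j+1} := L_j + N_j$ for positive integers $N_j$ to be chosen, and each $Q_j(t) = \sum_{k=0}^{N_j} c_{j,k}\, e^{2\pi i k t}$ is a $1$-periodic trigonometric polynomial with non-negative integer spectrum (I will impose $c_{0,0} = 0$ so that the frequency $\lam_0$ itself is not used by $P$). The spectrum of $P$ is then contained in the set $\{\lam_0 + L_j + ja + k : 0 \le j \le 2L,\ 0 \le k \le N_j\}$, which corresponds via $(L_j + k,\, j)$ to lattice points of $\Z_{\ge 0}^2$. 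These points form a chain under the product order, traversed by the up--right path that takes $N_0$ unit steps, then one $a$-step, then $N_1$ unit steps, and so on; reading off this path gives an FLC enumeration $\{\lam_n\}$ with $\lam_{n+1} - \lam_n \in \{1, a\}$, so condition~\ref{flc:ii} will be automatic.

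To match $\chi$ on $\Om$, I will exploit that each $Q_j$ is $1$-periodic and each $L_j$ is an integer. Writing $t = l + s$ with $l \in \Z \cap [-L, L]$ and $|s| \le h/2$ gives
\[
P(l + s) \;=\; e^{2\pi i \lam_0 l} \sum_{j=0}^{2L} e^{2\pi i j a l}\, \tilde Q_j(s), \qquad \tilde Q_j(s) := e^{2\pi i (\lam_0 + L_j + ja) s} Q_j(s).
\]
Setting $\tilde\chi_l(s) := e^{-2\pi i \lam_0 l} \chi(l + s)$, the desired identity $P(l+s) = \chi(l+s)$ becomes the $(2L+1)\times(2L+1)$ linear system
\[
\sum_{j=0}^{2L} e^{2\pi i j a l}\, \tilde Q_j(s) \;=\; \tilde\chi_l(s), \qquad l = -L,\dots,L,
\]
whose coefficient matrix is Vandermonde in the nodes $\{e^{2\pi i a l}\}_{l=-L}^{L}$. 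These nodes are pairwise distinct because $a$ is irrational, so the matrix is invertible, and I define $\tilde Q_j(s)$ by this inversion; each $\tilde Q_j$ is a fixed linear combination of the continuous functions $\tilde\chi_l$ and hence continuous on $[-h/2, h/2]$.

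It remains to realize each $Q_j(s) = e^{-2\pi i (\lam_0 + L_j + ja)s}\, \tilde Q_j(s)$, a continuous function on $[-h/2, h/2]$, as a trigonometric polynomial $\sum_{k=0}^{N_j} c_{j,k}\, e^{2\pi i k s}$ with non-negative integer spectrum, uniformly within prescribed error. Here the hypothesis $h < 1$ enters crucially: the map $s \mapsto z = e^{2\pi i s}$ sends $[-h/2, h/2]$ onto a proper arc of the unit circle, whose complement in $\C$ is connected, so by Mergelyan's theorem the polynomials in $z$ are uniformly dense in the space of continuous functions on the arc. Since multiplication by the unimodular function $z$ is an isometry of that space, the same density holds for polynomials without a constant term, permitting the constraint $c_{0, 0} = 0$. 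Taking the $N_j$ sufficiently large and the $c_{j,k}$ accordingly, while accounting for the operator norm of the Vandermonde inverse, produces a $P$ of the above form with $|P - \chi| \le \eps$ on $\Om$.

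The main technical load is the simultaneous satisfaction of the chain constraint and uniform approximation of an arbitrary continuous $\chi$ on $\Om$: the chain forces all non-zero frequencies of $P$ to lie on a single monotone up--right path in $\Z_{\ge 0}^2$, yet $\chi$ is unrestricted. The block ansatz decouples these concerns by separating the discrimination between the $(2L+1)$ translates $l + [-h/2, h/2]$ making up $\Om$ (handled by the Vandermonde inversion, where the hypothesis $a \notin \Q$ enters) from the one-dimensional approximation on a single component $[-h/2, h/2]$ (handled by Mergelyan's theorem on an arc, where the hypothesis $h < 1$ enters).
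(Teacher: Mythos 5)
Your proposal is correct and follows essentially the same strategy as the paper's proof: the block ansatz $P(t)=\sum_j e^{2\pi i(\cdot)t}Q_j(t)$ with $1$-periodic $Q_j$, the observation that $P(l+s)$ for integer $|l|\le L$ yields a $(2L+1)\times(2L+1)$ Vandermonde system in the nodes $e^{2\pi i a l}$ (invertible precisely because $a$ is irrational), and the reduction to uniform one-dimensional approximation on $[-h/2,h/2]$ with the spectra of successive blocks arranged to follow one another. The only minor departure is your invocation of Mergelyan's theorem on the arc $\{e^{2\pi i s}:|s|\le h/2\}$ where the paper instead cites the completeness of the tail exponential system $\{e^{2\pi i n t}\}_{n>N}$ in $C[-h/2,h/2]$ from Young's book; both are standard and interchangeable here, and your parametrization (shifting $Q_j$ by $L_j$ rather than building high-frequency $Q_j$) makes plain Mergelyan density suffice, though you should note it is the forward Vandermonde matrix, not its inverse, whose norm propagates the per-block errors into the final bound.
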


\begin{proof}
By multiplying $\chi(t)$ on the exponential
 $e^{-2\pi i \lam_0 t}$ we may assume that $\lam_0 = 0$.
We will construct   a trigonometric polynomial $P$ satisfying \ref{flc:i}
which is of the form
\begin{equation}
\label{eq:L8.2.2}
P(t) = \sum_{k=0}^{2L} e^{2\pi i k a t} Q_k(t),
\end{equation}
where $Q_k$ are trigonometric polynomials with integer spectrum such that
\begin{equation}
\label{eq:L8.2.16}
\spec(Q_k) \sbt \{j : N_k \le j \le N_{k+1}\}, \quad 0 \le k \le 2L,
\end{equation}
and $\{N_k\}$, $0 \le k \le 2L+1$,
 is an increasing sequence of integers with
 $N_0 = 1$. Then
\begin{equation}
\label{eq:K2.16}
\spec(P) \subset \bigcup_{k=0}^{2L} J_k, \quad
J_k =   \{ka + j :  \; N_k\le j\le N_{k+1} \},
\end{equation}
and $J_{k+1}$ follows
 $J_k$   for each $k=0,1,\dots, 2L-1$.  Let us write
 \begin{equation}
\label{eq:K2.19}
\bigcup_{k=0}^{2L} J_k =\{ \lam_1, \lam_2, \dots, \lam_K\},
\end{equation}
 where $\lam_n$ are ordered increasingly.
 Then  $\lam_1 - \lam_0 = 1$. 
 Next, suppose that $\lam_n$ and $\lam_{n+1}$ are
 two consecutive elements of the set
\eqref{eq:K2.19}. If
$\lam_n$ and $\lam_{n+1}$ lie in the same block $J_k$, 
then  $\lam_{n+1} - \lam_n = 1$.
  Otherwise, $\lam_n$ is the last element of $J_k$,
while $\lam_{n+1}$ 
is the first element of $J_{k+1}$, for some
$k \in \{0, 1, \dots, 2L-1\}$, and in this case
 $\lam_{n+1} - \lam_n = a$.
We thus see that
  condition \ref{flc:ii} is satisfied.

We now turn to the construction of the polynomial $P$.
First,  we note that
\eqref{eq:L8.2.2} and the periodicity of $Q_k$ imply that
for any integer $l$ we have
\begin{equation}
\label{eq:L8.2.5}
P(t+l) = \sum_{k=0}^{2L} e^{2\pi i k l a} H_k(t), \quad H_k(t) :=  e^{2\pi i k a t} Q_k(t).
\end{equation}
If we write \eqref{eq:L8.2.5} for $|l| \le L$, then we obtain
 a system of $2L+1$ linear equations for the
 polynomials  $H_k$, $0 \le k \le 2L$, 
 with  a  Vandermonde  coefficient matrix  $\{e^{2\pi i k l a} \}$
 whose  determinant does not vanish due to the irrationality of $a$.
By solving this system we get
\begin{equation}
\label{eq:L8.4.1}
H_k(t) =  \sum_{|l| \le L} d_{kl} P(t+l), \quad 0 \le k \le 2L,
\end{equation}
where  $\{d_{kl}\}$ are the entries of the inverse 
matrix, which depend on $a$ and $L$ only.

Now recall that $0 < h< 1$, and hence for every $N$
the exponential system $\{e^{2\pi i n t}\}$, $n > N$, 
is complete in the space $C[-\tfrac1{2}h, \tfrac1{2}h]$,
see e.g.\ \cite[Section 3.1]{You01}.
 Using this fact, we can choose the 
 trigonometric polynomials $Q_0, Q_1, \ldots, Q_{2L}$
  one after another, so that their spectra 
  ``follow one another'', i.e.\   satisfy \eqref{eq:L8.2.16},
  and such that  
\begin{equation}
\label{eq:L8.2.9}
\max_{|t| \le \frac{1}{2}h} \Big|Q_k(t) - e^{-2\pi i k a t} 
\sum_{|l|\le L} d_{kl} \chi(t+l) \Big|  \le \del,
 \quad 0 \le k \le 2L,
\end{equation}
where $\del := \eps \cdot (2L+1)^{-1}$.
Having chosen the polynomials $Q_k$, we next
define the polynomial $P$ using \eqref{eq:L8.2.2}.
It now follows from \eqref{eq:L8.2.5}, \eqref{eq:L8.4.1},
\eqref{eq:L8.2.9} that if we denote
\begin{equation}
\label{eq:L8.3.1}
E_k(t) :=  \sum_{|l|\le L} d_{kl} (P(t+l) - \chi(t+l)), 
\quad t \in [- \tfrac{1}{2}h, \tfrac{1}{2}h],
\quad 0 \le k \le 2L,
\end{equation}
then 
$ |E_k(t)|  \le  \del$. In turn, this implies that
for $|l| \le L$ and $t \in [- \tfrac{1}{2}h, \tfrac{1}{2}h]$ 
we have
\begin{equation}
|P(t+l)-\chi(t+l)| = \Big|
\sum_{k=0}^{2L} e^{2\pi i k  l a} E_k(t)\Big| \le  (2L+1) \cdot\del = \eps,
\end{equation}
that is, $|P(t) - \chi(t)| \le \eps$ for all $t \in \Om$.
Thus the lemma is proved.
\end{proof}

\subsection{Proof of  \thmref{thm:M1.2}}
By rescaling, it would be enough to prove the result in the 
 case where one of the numbers $a,b$ is equal to $1$. 
Hence, in what follows we shall assume that
$b = 1$, and 
 $a$ is an irrational positive number 
 (since  $a,b$ are linearly
independent over the rationals).

By virtue of \corref{cor:L2.11},
it  would suffice that we prove the existence of 
a Landau system 
$\{\lam_n\}_{n=1}^{\infty}$ 
satisfying
$\lambda_{n+1} - \lam_n \in \{1,a\}$ for all $n$.
Let $\{\chi_k\}_{k=1}^{\infty}$  be a sequence 
which  is  dense in the Schwartz space,
and fix a sequence of  Landau sets
 $\Om_k = \Om(L_k, h_k)$ 
such that $L_k \to + \infty$ and $h_k \to 1$.
 We now construct by induction an increasing sequence 
 of positive integers $\{N_k\}$,
 and trigonometric polynomials 
\begin{equation}
\label{eq:L7.3}
P_k(t) = \sum_{N_k < n \le N_{k+1}} c_n  e^{2 \pi i \lam_n t} 
\end{equation} 
in the following way.  
At the $k$'th step of the induction, we apply \lemref{lem:L8.9} with
$\Om_k$, $\chi_k$ and $\eps_k = k^{-1}$, in order to obtain
a trigonometric polynomial $P_k$ of the form \eqref{eq:L7.3}
such that $|P_k(t) - \chi_k(t)| \le k^{-1}$ for all $t \in \Om_k$.
Moreover, \lemref{lem:L8.9} allows us to choose
the frequencies $\{\lam_n\}_{n=1}^{\infty}$ so that 
$\lambda_{n+1} - \lam_n \in \{1,a\}$
for all $n$.

It remains to show that
$\{\lam_n\}_{n=1}^{\infty}$ 
is a Landau system. This can
be done in the same way as 
in the proof of \propref{prop:L5.1}.
Indeed, 
let $\Om = \Om(L, h)$ be a Landau set.
Then for all sufficiently large $k$ we have
 $\Om \sbt \Om_k$,   hence 
$|P_k(t) - \chi_k(t)| \le k^{-1}$ for all $t \in \Om$.
The sequence 
 $\{P_k\}$ is therefore dense in the
 space $L^2(\Om)$.   Moreover, 
 for each $k$, the
 polynomial $P_k$ lies in 
 the linear span of the system 
$\{e^{2 \pi i \lam_n t}\}$, $n>N_k$.
This implies that for every $N$ the
system $\{e^{2 \pi i \lam_n t}\}$, $n>N$,
 is complete in the space $L^2(\Om)$.
 Hence the sequence
$\{\lam_n\}_{n=1}^{\infty}$ is
a Landau system, and  \thmref{thm:M1.2}
is thus proved.
\qed



\begin{thebibliography}{AAAAA}

\bibitem[AO96]{AO96}
A. Atzmon, A. Olevskii,
Completeness of integer translates in function spaces on $\R$.
J. Approx. Theory \textbf{87} (1996), no. 3, 291--327.

\bibitem[Beu51]{Beu51}
A. Beurling,
On a closure problem.
Ark. Mat. \textbf{1} (1951), 301--303.

\bibitem[BOU06]{BOU06}
J. Bruna, A. Olevskii, A. Ulanovskii, 
Completeness in $L^1(\R)$ of discrete translates. 
Rev. Mat. Iberoam. \textbf{22} (2006), no. 1, 1--16.

\bibitem[Fax96]{Fax96}
B. Fax\'{e}n,
On approximation by equidistant translates.
Preprint of the Royal Institute of Technology, Stockholm, 1996.

\bibitem[FOSZ14]{FOSZ14}
D. Freeman, E. Odell, Th. Schlumprecht, A. Zs\'{a}k,
Unconditional structures of translates for $L_p(\R^d)$.
Israel J. Math. \textbf{203} (2014), no. 1, 189--209.

\bibitem[Hel10]{Hel10} 
H. Helson, Harmonic analysis, Second edition,
Hindustan Book Agency, 2010.

\bibitem[Lan64]{Lan64}
H. J. Landau,
A sparse regular sequence of exponentials closed on large sets. 
Bull. Amer. Math. Soc. \textbf{70} (1964), 566--569.

\bibitem[Lev25]{Lev25}
N. Lev,
Completeness of uniformly discrete translates in $L^p(\R)$.
J. Anal. Math. \textbf{155} (2025), no. 1, 391--400.

\bibitem[LO11]{LO11}
N. Lev, A. Olevskii,
Wiener's `closure of translates' problem 
and Piatetski-Shapiro's uniqueness phenomenon.
Ann. of Math. (2) \textbf{174} (2011), no.1, 519--541.

\bibitem[LT26]{LT26}
N. Lev and A. Tselishchev,
Schauder frames of discrete translates in $L^p(\R)$.
Rev. Mat. Iberoam. \textbf{42} (2026), no. 2, 573--590.

\bibitem[NO09]{NO09}
S. Nitzan, A. Olevskii, Quasi-frames of translates. 
C. R. Math. Acad. Sci. Paris \textbf{347} (2009), no. 13--14, 739--742.

\bibitem[Ole97]{Ole97}
A. Olevskii, 
Completeness in $L^2(\R)$ of almost integer translates.
C. R. Acad. Sci. Paris S\'{e}r. I Math. \textbf{324} (1997), no. 9, 987--991.

\bibitem[Ole98]{Ole98}
A. Olevskii, 
Approximation by translates in $L^2(\R)$, 
Real Anal. Exchange \textbf{24} (1998/99), no. 1, 43--44.

\bibitem[OU04]{OU04}
A. Olevskii, A. Ulanovskii, 
Almost integer translates. Do nice generators exist?
J. Fourier Anal. Appl. \textbf{10} (2004), no. 1, 93--104.

\bibitem[OU16]{OU16}
A. Olevskii, A. Ulanovskii, 
Functions with disconnected spectrum: sampling, interpolation, translates.
American Mathematical Society, 2016.

\bibitem[OU18]{OU18}
A. Olevskii, A. Ulanovskii, 
Discrete translates in $L^p(\R)$. 
Bull. Lond. Math. Soc. \textbf{50} (2018), no. 4, 561--568.

\bibitem[Wie32]{Wie32}
N. Wiener,
Tauberian theorems.
Ann. of Math. (2) \textbf{33} (1932), 1--100. 

\bibitem[You01]{You01}
R. M. Young, 
An introduction to nonharmonic Fourier series. Revised first edition. 
Academic Press, 2001.

\end{thebibliography}
\end{document}